\documentclass[12pt]{amsart}
\usepackage {amssymb}
\usepackage {amsmath}
\usepackage{amsthm}
\usepackage{graphicx}
\usepackage {amscd}
\usepackage {epic}
\usepackage[alphabetic]{amsrefs}
\usepackage[colorlinks, linkcolor=blue, citecolor=blue, urlcolor=blue]{hyperref}
\usepackage[left=1.5in,right=1.5in,top=1.5in,bottom=1.5in]{geometry}
\usepackage{soul}
\usepackage{mathrsfs}
\usepackage{comment}
\usepackage[all]{xy}
\usepackage{amsmath}
\usepackage{xcolor}
\usepackage{amsthm}
\usepackage{mathrsfs}
\usepackage{tikz-cd}
\usepackage{enumerate}

\newcommand{\set}[1]{\{#1\}}

\newcommand{\act}{\curvearrowright}

\newtheorem{thm}{Theorem}[section]
\newtheorem{prop}[thm]{Proposition}
\newtheorem{lem}[thm]{Lemma}
\newtheorem{cor}[thm]{Corollary}
\newtheorem{exm}[thm]{Example}

\newtheorem{conj}{Conjecture}[section]

\newtheorem{remark}[thm]{Remark}

\theoremstyle{definition}
\newtheorem{df}[thm]{Definition}

\newcommand{\p}{\partial}

\newcommand{\dt}{\Delta}
\newcommand{\bd}{\textbf }

\title[Abundance for K\"ahler Varieties via Algebraic reduction]{An approach to the abundance conjecture for K\"ahler varieties via algebraic reduction}

\author{Zhiyuan Jiang}

\begin{document}

\begin{abstract}
In this article, we establish a strategy to the abundance conjecture for K\"ahler varieties via induction on algebraic dimension. Our strategy is to reduce the abundance conjecture for K\"ahler varieties to the abundance conjecture for projective varieties using the algebraic reduction fibration. In dimension 4, we apply our inductive strategy to obtain some cases of the abundance conjecture for K\"ahler fourfolds that are not algebraic or have trivial $K_X$.
\end{abstract}

\maketitle

\tableofcontents

\section{Introduction}

The abundance conjecture is one of the most important open problems in birational geometry.

\begin{conj}
    Let $(X,\dt)$ be a K\"ahler lc pair. If $K_X+\dt$ is nef, then $K_X+\dt$ is semiample.
\end{conj}

The abundance conjecture was originally stated for projective varieties in \cite[6-1-14]{MR946243}. It predicts that every minimal model admits a globally defined Iitaka fibration, which is crucial in the birational classification theory of algebraic varieties. The conjecture implies other major open conjectures in birational geometry, for example, Iitaka's conjecture on the subadditivity of Kodaira dimensions (see \cite{MR814013}). 

The abundance conjecture is open in dimension $\ge 4$. The  conjecture is known for complex surfaces in \cite{MR731524} and for complex projective varieties in dimensions at most 3 in \cite{miyaoka1}, \cite{miyaoka2} and \cite{MR1161091}. The log abundance theorem for projective threefolds is proven in \cite{KMM94}. There are still major open cases even in dimension 4

In higher dimensions, a major strategy towards the abundance conjecture is to establish an inductive proof using various fibrations. In \cite[Theorem 3.3]{9}, Takao Fujita first applies this method to elliptic fibrations to establish abundance for elliptic threefolds.
Florin Ambro establishes an inductive approach using the nef reduction map in \cite{5}. In \cite[Theorem 4.5]{MR2740690}, Osamu Fujino develops an inductive argument using the Albanese fibration. Based on this inductive argument, Osamu Fujino establishes abundance for projective fourfolds  with irregularity $q\ne 0$ in \cite[Corollary 4.7]{MR2740690}.

One recent breakthrough in bimeromorphic geometry is the establishment of the minimal model program for K\"ahler threefolds \cite{1,12,MR1471137,MR1637984,MR1881199,MR3329195}. In \cite{3},  the minimal model program for K\"ahler fourfolds is established assuming the non-vanishing conjecture for Kähler fourfolds. The MMP for projective morphisms between analytic spaces is systematically discussed in \cite{fujino2022minimalmodelprogramprojective}. In dimension 3, abundance and log abundance for Kähler threefolds are established by Campana, Höring, Peternell, Das and Ou in \cite{13}, \cite{MR4954870} and \cite{14,das2025logabundancecompactkahler}. 

As the next step, the abundance conjecture is conjectured to hold more generally for Kähler varieties of arbitrary dimension.
The abundance conjecture for Kähler varieties is a major open problem in Kähler geometry. Together with the minimal model program, by \cite{MR2257317}, the abundance conjecture would imply deformation invariance of plurigenera for Kähler varieties (which is still open).

In this article, we establish a new inductive approach to the abundance conjecture for Kähler varieties. Our idea is to establish an inductive argument via the algebraic reduction map. For the statement below, recall that the algebraic reduction map of a compact analytic variety $X$ is a meromorphic map from $X$ to a projective variety $Y$ that induces an isomorphism on fields of meromorphic functions, and the algebraic dimension of $X$ is $a(X) := \operatorname{trdeg}_\mathbb C(\mathbb C(X)) = \dim(Y)$ (See Section 2.3 for details).

The main result of this paper is the following inductive approach towards the abundance conjecture for Kähler varieties.

\begin{thm}\label{thmA}
    Let $(X,\dt)$ be a K\"ahler klt log pair. Assume
    \begin{enumerate}
        \item The MMP and abundance for projective varieties hold in dimension $a(X)$ and abundance for K\"ahler varieties holds in dimension $\operatorname{dim}(X) - a(X)$;
        \item The algebraic reduction map of $X$ is almost holomorphic; and
        \item The moduli b-divisor $\bd M$ of the algebraic reduction fibration is b-good and nef.
    \end{enumerate}
    \noindent If $K_X+\dt$ is nef, then $K_X+\dt$ is semiample.
\end{thm}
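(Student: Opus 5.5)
Let $f\colon X\dashrightarrow Y$ be the algebraic reduction map. By assumption (2) it is almost holomorphic, so after resolving we get a bimeromorphic $\mu\colon X'\to X$ from a compact K\"ahler manifold and a holomorphic fibration $f'\colon X'\to Y$ with $Y$ projective, $\dim Y=a(X)$. Since $f$ is almost holomorphic, the general fibre $F$ of $f'$ maps isomorphically onto a general fibre of $f$ in $X$ and does not meet the indeterminacy locus. The plan has three steps. First, apply abundance on the fibres. Then run the canonical bundle formula on $Y$ and use the projective MMP and abundance there. Finally, transport semiampleness back to $X$ using the fact that all meromorphic functions come from $Y$.

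\textbf{Fibres.} On $F$ the restriction $(K_X+\dt)|_F = K_F+\dt_F$ is nef and $(F,\dt_F)$ is klt. $F$ is a compact K\"ahler variety of dimension $\dim X-a(X)$, so assumption (1) makes $K_F+\dt_F$ semiample. I next claim $\kappa(F,K_F+\dt_F)=0$. If $\kappa(F)>0$, then since $K_X+\dt$ restricted to the general fibre is semiample, the relative Iitaka fibration of $K_{X'}+\dt'$ over $Y$ would give a fibration $X\dashrightarrow Z\to Y$ with $\dim Z>\dim Y$. Making this rigorous needs the relative version of the fibrewise result, via $f'_*\mathcal O(m(K+\dt))$. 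The base $Z$ is algebraic over $Y$, so $a(X)\ge\dim Z>a(X)$, a contradiction. The same argument for any nonzero section shows that $K_F+\dt_F\sim_{\mathbb Q}0$.

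\textbf{Canonical bundle formula.} With $K_{X'}+\dt'=\mu^*(K_X+\dt)+E$ and $E\ge0$ exceptional, the relation $K_F+\dt_F\sim_{\mathbb Q}0$ gives an lc-trivial fibration (in Ambro's sense, for K\"ahler total space). After a further modification we get
\[
\mu^*(K_X+\dt)\sim_{\mathbb Q} f'^*(K_Y+B_Y+M_Y).
\]
Here $B_Y$ is the discriminant. On an appropriate model $(Y,B_Y)$ is generalized klt, since $(X,\dt)$ is klt. By (3), $\bd M$ is b-nef and b-good, so it descends on a model and is semiample-like in the sense of b-goodness. The divisor $D_Y:=K_Y+B_Y+M_Y$ is nef, because its pullback is nef and $f'$ is surjective.

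\textbf{Projective side.} Using b-goodness, choose $0\le M'\sim_{\mathbb Q}M_Y$ general, on a model where $\bd M$ descends and is the pullback of an ample divisor from some base. This turns $(Y,B_Y+M')$ into a klt pair with $K_Y+B_Y+M'\equiv D_Y$ nef. Abundance for projective klt pairs in dimension $a(X)$ then gives that $D_Y$ is semiample. The MMP hypothesis is what is needed to pass from the lc-trivial model to a birational model where the statement applies. Pulling back, $\mu^*(K_X+\dt)$ is semiample, and since $\mu_*\mathcal O_{X'}=\mathcal O_X$ and the relation is an honest $\mathbb Q$-linear equivalence, $K_X+\dt$ is semiample.

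\textbf{Main obstacle.} I expect the hardest step to be making the canonical bundle formula rigorous when $X$ is only K\"ahler. This means making $B_Y$ and $\bd M$ well defined, and checking that the b-good hypothesis is exactly what allows $(Y,B_Y+M')$ to be chosen klt; one may need the generalized-pair version of abundance, derived from (1) via the MMP. A second delicate point is the claim $\kappa(F)=0$, which needs a relative Iitaka argument in the analytic category. I would first try to handle it using Hodge-theoretic positivity of $f'_*\omega^m$ together with $a(X)=\dim Y$.
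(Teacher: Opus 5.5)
\textbf{There is a genuine gap.} It sits at the step ``the relation $K_F+\Delta_F\sim_{\mathbb Q}0$ gives an lc-trivial fibration \dots\ after a further modification we get $\mu^*(K_X+\Delta)\sim_{\mathbb Q} f'^*(K_Y+B_Y+M_Y)$''.

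Fibrewise triviality can be upgraded to a global statement: use $f'_*\mathcal O(m(K+\Delta))$, twist by $f'^*H$, and apply abundance on fibres to kill the horizontal part. Even so, it only yields $\mu^*(K_X+\Delta)\sim_{\mathbb Q}D$ with $D$ \emph{vertical}. A vertical divisor is in general not a pullback. It may contain only part of the fibre over a prime divisor of $Y$, or components lying over loci of codimension $\ge 2$, and no modification of $X'$ or $Y$ removes these by itself.

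What is needed is a negativity statement: because $D$ is nef, on a flat model it equals $g^*G$ for a b-nef divisor $G$ on the base. Fujita's projective argument uses hyperplane sections and curves in fibres, and these are unavailable when the total space is only K\"ahler, since fibres may contain no curves. Supplying this is the paper's key new ingredient. A Hodge--Riemann argument gives $\int_X[D_Z]^2\wedge\omega^{n-2}<0$ for effective divisors contracted to a point, with a Zariski-lemma variant when the base is a curve. Combined with the subvariety test for nefness, this yields Lemma~\ref{1.5}, Proposition~\ref{flatpullback} and Corollary~\ref{verticalnef}. Your proposal never addresses this, and you place the main difficulty elsewhere.

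\textbf{Second problem: the projective step fails as written, even granting the pullback relation.} The discriminant $B_Y$ of the crepant pair $K_{X'}+\Delta_{X'}=\mu^*(K_X+\Delta)$ can have negative coefficients. This happens along prime divisors of $Y$ whose entire preimage is $\mu$-exceptional, which can occur once you pass to the higher model of $Y$ on which $\bd M$ descends. Then $(Y,B_Y+M')$ is only sub-klt, and abundance does not apply. Making the boundary effective forces you to add $f^*F$ with $F\ge 0$, and then $K_Y+\Delta_Y$ is no longer nef.

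The paper handles this by writing
$\mu^*(K_X+\Delta)+(\Delta_M^-+A^-)\sim_{\mathbb Q}f^*(K_V+\Delta_V)+A^+$,
where $\Delta_M^-+A^-$ is $\mu$-exceptional and $A^+$ is unsaturated over $V$. The $\mu$-exceptionality uses that $f$ dominates a flat model, so $f$-exceptional divisors are $\mu$-exceptional. It then applies the pullback compatibility of Fujita--Zariski decompositions (Proposition~\ref{pullback}, again resting on Lemma~\ref{1.5}) to both sides, obtaining $\overline{K_X+\Delta}=f^*\bd P(K_V+\Delta_V)$. MMP and abundance on $V$ then make $\bd P(K_V+\Delta_V)$ b-semiample. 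Your remark that ``the MMP hypothesis is what is needed'' points in this direction, but it supplies neither the effectivity fix nor the negativity mechanism that makes it work.

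\textbf{Minor.} If $\mu$ also resolves singularities of $X$ along a locus dominating $Y$, general fibres of $f'$ are not isomorphic to fibres in $X$. Also, with your effective $\Delta'$, $K_{X'}+\Delta'$ need not be fibrewise trivial. Work instead with the crepant boundary on the normalized graph, as the paper does.
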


In dimension 4, we apply our inductive strategy to obtain the following cases of the abundance conjecture for Kähler fourfolds. As far as we are aware, these are the first cases of the abundance conjecture for K\"ahler fourfolds that are not algebraic or have trivial $K_X$.

\begin{thm}\label{dim4}
    Let $X$ be a 4-dimensional compact K\"ahler manifold with $a(X) = 3$. If $K_X$ is nef, then $K_X$ is semiample. 
\end{thm}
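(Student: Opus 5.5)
We will deduce Theorem \ref{dim4} from Theorem \ref{thmA} with $\dt=0$, checking its three hypotheses for a compact K\"ahler fourfold $X$ with $a(X)=3$.

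\textbf{Hypothesis (1).} The algebraic reduction $f\colon X\dashrightarrow Y$ has $\dim Y=3$ and general fibre of dimension $1$. The MMP and abundance for projective threefolds are known (Miyaoka, Kawamata, Koll\'ar et al.). Abundance for K\"ahler curves is classical. So (1) holds.

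\textbf{Hypothesis (2).} We need $f$ to be almost holomorphic. The general fibre $F$ of a resolution of $f$ is a smooth curve. By the standard fact that the general fibre of an algebraic reduction has algebraic dimension $0$ in a suitable sense, $F$ cannot be rational, since otherwise the fibration would be "too algebraic". A cleaner route is to use that $X$ is non-uniruled: if $X$ were uniruled with rational fibres, then $a(X)=4$. Since $K_X$ is nef, $X$ is not uniruled, so $F$ has genus $g(F)\ge 1$. We will take $F$ elliptic, which is forced: a genus $\ge2$ fibration over a projective base would make $X$ projective by relative canonical polarization.

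For an elliptic fibration (more generally a fibration by curves, i.e.\ a family of cycles of dimension $1$), the fibres deform in the cycle space, with the Barlet space component being $Y$ itself. Two general fibres can only meet if they coincide. This is the usual argument that a meromorphic fibration onto the base of a covering family of cycles, whose general members are disjoint, is almost holomorphic. We expect to invoke a result from Section 2 on almost holomorphicity of algebraic reductions with one-dimensional fibres.

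\textbf{Hypothesis (3), the main obstacle.} We must show that the moduli b-divisor $\bd M$ of the elliptic fibration is b-good and nef. After resolving to a holomorphic elliptic fibration $f'\colon X'\to Y'$, the moduli part is governed by the $j$-invariant: $\bd M$ descends to $Y'$, with $12M_{Y'}=j^*\mathcal O_{\mathbb P^1}(1)$ (Kodaira's canonical bundle formula, in the b-divisor form of Fujino--Mori and Ambro). Hence $\bd M$ is b-nef and b-semiample, in particular b-good. We must check that this classical formula holds in the K\"ahler, non-projective total-space setting. Since the base $Y'$ is projective and the fibration is locally projective over $Y'$ (elliptic curves are projective, and the relevant local computations are analytic over the base), the standard proof via the period map and variation of Hodge structure on $Y'$ should carry over.

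Theorem \ref{thmA} then gives that $K_X$ is semiample.
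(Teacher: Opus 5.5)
The genuine gap is in hypothesis (2). Your claim that two general fibres can only meet if they coincide is essentially a restatement of almost holomorphicity, and nothing you wrote establishes it. Fibres of a meromorphic map are disjoint only inside its domain of definition; their closures may all pass through the indeterminacy locus. For example, on a del Pezzo surface $S$ of degree one, $|-K_S|$ is a pencil of elliptic curves with a single base point. Since $q(S)=0$, the cycle-space component of a member is the pencil itself, and a unique member passes through a general point. Yet $S\dashrightarrow\mathbb P^1$ is not almost holomorphic. So ellipticity of the fibres plus cycle-space considerations cannot suffice; you must use that the map is the algebraic reduction. Section 2 contains no result of this kind: the paper imports it from \cite[Lemma 2.12]{MR4698899}.

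One way to prove it (a sketch) is as follows. Let $\mu\colon\Gamma\to X$, $f\colon\Gamma\to Y$ be a resolved graph and $I\subseteq X$ the indeterminacy locus, and suppose the general fibre $\Gamma_y$ meets $\mu^{-1}(I)$. Then some prime divisor $G\subseteq\mu^{-1}(I)$ dominates $Y$, so $f|_G$ is generically finite. Hence for general $q\in\mu(G)$ the set $f(G\cap\mu^{-1}(q))$ is positive-dimensional and meets a general very ample $H$. Thus $D:=\mu_*f^*H$, which is Cartier because $X$ is smooth, contains $\mu(G)$ in its support. Therefore $\mu^*D=f^*H+E'$ with $E'\ge 0$ exceptional and containing $G$, and $\mu^*D\cdot\Gamma_y=E'\cdot\Gamma_y>0$. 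For $k\gg 0$ the sheaf $f_*\mathscr O_\Gamma(2\mu^*D)\otimes\mathscr O_Y(kH)$ is globally generated on the projective $Y$. This yields two sections of $\mathscr O_\Gamma(2\mu^*D+kf^*H)$ whose ratio is a meromorphic function on $\Gamma$ that is nonconstant on $\Gamma_y$, contradicting $\mathbb C(\Gamma)=f^*\mathbb C(Y)$.

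The rest of your proposal is sound. Hypothesis (1) is checked exactly as in the paper. For (3), the paper cites \cite[Proposition 4.16]{loginov2025finitenessprojectivepluricanonicalrepresentation}, which gives b-semiampleness of the moduli part of any klt-trivial fibration of relative dimension one. You instead use Kodaira's formula $12\,\bd M_{V}\sim_{\mathbb Q} j^*\mathscr O_{\mathbb P^1}(1)$ on a model where $j$ is a morphism. This is a valid, more classical alternative specific to elliptic fibres. Note that it needs the generic fibre of $(M,\dt_M)\to V$ to be $(F,0)$ with $F$ elliptic. That holds because $\dt_M=\mu^*K_X-K_M$ is $\mu$-exceptional, and therefore vertical once (2) is known. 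The concern you raise about a K\"ahler rather than projective total space applies equally to the paper's citation.
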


By applying Theorem \ref{thmA},  we also recover the special case of the results in \cite{13}, \cite{MR4954870} and \cite{14,das2025logabundancecompactkahler}  when $a(X)\ne 0$:

\begin{thm}\label{dim3}
    Let $(X,\Delta)$ be a klt threefold with $a(X) \ne 0$. If $K_X+\dt$ is nef, then $K_X+\dt$ is semiample.
\end{thm}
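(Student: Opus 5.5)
We deduce Theorem \ref{dim3} from Theorem \ref{thmA} by checking its three hypotheses for a klt threefold $(X,\dt)$ with $a(X)\in\{1,2,3\}$ and $K_X+\dt$ nef. If $a(X)=3$, then $X$ is Moishezon. A Moishezon variety with klt singularities that is K\"ahler (or that has rational singularities and carries a K\"ahler form) is projective. So the statement reduces to log abundance for projective klt threefolds \cite{KMM94}, and we may assume $a(X)\in\{1,2\}$. In that case $\dim X - a(X)\in\{1,2\}$.

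\emph{Hypothesis (1).} The MMP and abundance for projective varieties of dimension $a(X)\le 2$ are classical: curves are trivial, and for surfaces the log MMP and log abundance for klt (indeed lc) pairs are known. Abundance for K\"ahler varieties of dimension $\dim X-a(X)\le 2$ is also known. For curves this is Riemann--Roch. For klt K\"ahler surface pairs it follows from the surface theory \cite{MR731524} together with its log version, which should be checked for the pairs that appear on the general fibre. Since the general fibre of the algebraic reduction is a compact K\"ahler variety of dimension $1$ or $2$ with an induced klt boundary, this hypothesis holds.

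\emph{Hypothesis (2).} Almost holomorphicity of the algebraic reduction $f\colon X\dashrightarrow Y$ is automatic when $a(X)=1$: a meromorphic map to a smooth curve from a normal variety is holomorphic outside a codimension-two set, and its indeterminacy does not meet general fibres. When $a(X)=2$ the general fibre is a curve $F$, and we must show the closures of general fibres are disjoint. The plan is to use that $F$ is an elliptic curve or $\mathbb P^1$. If $F\cong\mathbb P^1$, then $X$ is uniruled, and the rational quotient would make $X$ projective. That would give $a(X)=3$, a contradiction, so $F$ is elliptic. If the fibres of the elliptic algebraic reduction met, they would cover a family whose cycle-space component has dimension $2$. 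Any indeterminacy point would lie on a one-parameter family of fibres, forcing a surface swept out by them. One then shows that such a family produces extra meromorphic functions, contradicting $a(X)=2$; this is the argument that the algebraic reduction of a K\"ahler threefold with $a=2$ is almost holomorphic, via the Barlet cycle space.

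\emph{Hypothesis (3).} Since the fibre dimension is at most $2$ and the fibres are K\"ahler with $K_F+\dt_F$ pseudo-effective, we treat the moduli b-divisor in cases. If the general fibre is a curve with $K_F+\dt_F\equiv 0$, i.e.\ an elliptic curve with $\dt_F=0$ or $\mathbb P^1$ with a boundary of degree $2$, then $\bd M$ is b-nef and b-good. This is Kodaira's canonical bundle formula: $\bd M$ is a $\mathbb Q$-multiple of the pullback of the $j$-map, hence b-semiample. If $K_F+\dt_F$ is big on a curve, the relevant b-divisor is the moduli part for a fibration of general type curves. Here we reduce to the Iitaka-fibration form of the canonical bundle formula, where b-nefness and b-semiampleness are known in relative dimension one (Prokhorov--Shokurov, Ambro). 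For surface fibres over a curve base ($a(X)=1$), we use that on a curve base any nef divisor is good. By Ambro/Fujino--Mori nefness, the moduli part descends to the base curve as a nef divisor, and on a curve nef implies b-good.

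The main obstacle will be hypothesis (2) for $a(X)=2$, together with the analytic (non-projective) validity of the canonical bundle formula inputs in hypothesis (3). We would first attempt to transfer the algebraic proofs via the relative projectivity of $f$ over a dense open subset of $Y$.
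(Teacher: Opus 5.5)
Your overall reduction is the paper's: dispose of $a(X)=3$ by projectivity of Moishezon K\"ahler spaces with rational singularities, then verify hypotheses (1)--(3) of Theorem~\ref{thmA} for $a(X)\in\{1,2\}$. However, your verification of hypothesis (3) for $a(X)=1$ has a genuine gap. You assert that on a curve base ``nef implies b-good'', and this is false. A smooth projective curve $V$ has no other birational models, and its only contractions are the identity and the map to a point. So a nef $\mathbb Q$-divisor on $V$ is b-nef and good exactly when it has positive degree or is $\mathbb Q$-linearly trivial. Bases of positive genus do occur, e.g.\ $E\times T$ with $E$ an elliptic curve and $T$ a $2$-torus with $a(T)=0$. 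On such a base, a degree-zero non-torsion divisor is nef but not good. For such a moduli part, the step in the proof of Theorem~\ref{thmA} that produces a klt boundary $\Delta_V\ge 0$ with $\Delta_V\sim_{\mathbb Q}B'+\mathbf M_V$ breaks down (take $B'=0$).

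What you need is that the moduli part of a klt-trivial fibration over a curve is semiample, in particular torsion when it has degree zero. This is a Hodge-theoretic (finite monodromy) statement, not a consequence of nefness. The paper supplies it as Proposition~\ref{curvebase}, the K\"ahler version of Ambro's theorem, built on the canonical bundle formula of Theorem~\ref{moduli_b-nef}, which is proved directly for a K\"ahler total space over a projective base. Your suggested workaround, transferring algebraic proofs through relative projectivity of $f$ over a dense open subset, is not available: for $a(X)=1$ the general fibre may be a non-algebraic surface.

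Several other steps are wrong as justified, although their conclusions are covered by results the paper cites.
\begin{itemize}
\item For hypothesis (2) with $a(X)=1$, a meromorphic map from a normal variety to a curve can have indeterminacy meeting every fibre (the pencil of planes through a line in $\mathbb P^3$). So almost holomorphicity is not automatic. The paper invokes \cite[Corollary 1.3]{MR2103314} for all K\"ahler threefolds, which also replaces your unproved cycle-space sketch for $a(X)=2$.
\item The inference ``$F\cong\mathbb P^1$, so $X$ is uniruled, hence projective'' is false as stated: $\mathbb P^1\times T$ is uniruled and not algebraic. What is true is that a K\"ahler space fibred in rational curves over a projective surface is Moishezon. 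To see this, on a resolution twist $-mK$, which is ample on the general fibre, by pullbacks of an ample divisor.
\item Your case ``$K_F+\Delta_F$ big'' never occurs. Under hypotheses (1) and (2), Lemma~\ref{Kvertical} gives $\kappa(F,K_F+\Delta_F)=0$ on the very general fibre, so after the base changes of Section 6 the algebraic reduction is a klt-trivial fibration. For $a(X)=2$, hypothesis (3) is then precisely the relative-dimension-one b-semiampleness of \cite[Proposition 4.16]{loginov2025finitenessprojectivepluricanonicalrepresentation}, of which your $j$-map argument is the elliptic special case.
\end{itemize}
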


We briefly discuss the assumptions in Theorem 1.1 and some cases where these assumptions are known to hold.

The assumption (1) has an inductive nature, i.e. we split the abundance conjecture for K\"ahler varieties in dimension $n$ to the ones in dimension $a(X)$ for projective varieties and $\dim (X)-a(X)$ for K\"ahler varieties.

In assumption (3),  we say a b-divisor $\bd D$ is \textbf{b-nef and good}  if there exists a birational model $Y'\to Y$ and a proper contraction $h:Y'\to Z$ such that $\bd D$ descends to $Y'$ and $\bd D_{Y'}\sim _\mathbb Q h^*N$ for some nef and big $\mathbb Q$-divisor $H$ of $Z$.

Assumption (3) has been fully established for projective varieties in \cite[Theorem 3.3]{15}. In \cite[Proposition 4.16]{loginov2025finitenessprojectivepluricanonicalrepresentation}, Loginov and Shramov prove that every klt-trivial fibration of relative dimension one must have a b-semiample moduli b-divisor. In particular, this implies that assumption (3) holds in the case $a(X) =\dim(X)-1$.

For assumption (2), since the question on almost holomorphicity is interesting on its own, we would like to state it as a conjecture:

\begin{conj}
    Let $X$ be a compact K\"ahler variety. Then the algebraic reduction map of $X$ is almost holomorphic.
\end{conj}

The conjecture is known to be true when $\dim(X) = 3$ \cite[Corollary 1.3]{MR2103314} or when $X$ is smooth and $a(X) = \dim X-1$ \cite[Lemma 2.12]{MR4698899}. In fact, more generally, the conjecture holds when the algebraic reduction map has projective fibers, or when the target is not uniruled. The author would like to thank Professor Frédéric Campana for discussing this question with us and providing the last statement.

To prove theorem \ref{thmA}, our strategy is to reduce abundance for K\"ahler varieties to abundance for projective varieties via the algebraic reduction map, which is a map that exists for every analytic variety and is unique up to bimeormorphic equivalence.

The idea is to compare the canonical divisors along the algebraic reduction map. To make this possible, we establish the following results
\begin{itemize}
    \item The pullback compatibility of Zariski decomposition for Kähler varieties (Proposition \ref{pullback}): It provides us with a bimeromorphic description of the abundance conjecture.
    \item The canonical bundle formula for morphisms from Kähler varieties to projective varieties (Theorem \ref{moduli_b-nef}): This is used to compare the canonical bundles along the algebraic reduction fibration.
    \item Semi-stable reduction in codimension one for morphisms from Kähler varieties to projective varieties (Theorem \ref{SSR}): This is used to modify the klt-trivial fibration in the proof of the canonical bundle formula.
\end{itemize}

The pullback compatibility of Zariski decomposition is established in \cite[Proposition 1.24]{9} for projective varieties. In his proof of the main Lemma, Takao Fujita makes use of hyperplane sections which do not always exist in the analytic set-up. In this article, we develop a new Hodge-theoretic strategy to prove an analytic analogue of Fujita's compatibility result.

The canonical bundle formula is proven in the fundamental paper \cite[Theorem 2.7]{8} for projective varieties.  Our observation is that morphisms from K\"ahler varieties to projective varieties provide a suitable setup for the proof to work.  We study the roles that the domain and base play in the theory and extend the canonical bundle formula to the case where the domain is K\"ahler and base is projective.

The semistable reduction in codimension one is established in \cite{Kempf1973} and \cite[Theorem 4.3]{8} for projective varieties. We extend the semistable reduction in codimension one to the set-up where the domain is analytic and the base is projective. In the proof, we study the issue of extension of coherent ideal sheaves from open subsets to make sure the necessary extension arguments apply in the analytic set-up.

Finally, using those tools, we prove Theorem \ref{thmA} in Section 6 by comparing the canonical bundles along the algebraic reduction fibration.

\section*{Acknowledgment}

I would like to thank Professor James M\textsuperscript cKernan for his guidance, discussion and support. I would like to thank Professor Takumi Murayama for discussions and his support. I'm also grateful to Professor Florin Ambro for very insightful discussions. I also want to thank Professor Frédéric Campana, Professor Paolo Cascini, Professor Andreas H\"oring for many very helpful discussions. The author was partially supported by a grant from the Simons Foundation.

\section{Preliminaries}

\subsection{Analytic Geometry}

In this paper, an analytic variety is a (Hausdorff and second countable) complex analytic space that is both reduced and irreducible. We denote the nonsingular locus of an analytic variety $X$ as $X^{sm}$.  Let $X$ be an analytic variety and $U\subseteq X$ be Zariski open. We say $U$ is a \textbf{big Zariski open subset}  if $X-U$ is a nowhere dense closed subset of codimension at least $2$. We denote $\mathscr K_X$ as the sheaf of meromorphic functions. We will also apply the GAGA principle without explicitly mentioning it.

\begin{df}
    A \textbf{contraction} is a proper surjective morphism $f:X\rightarrow Y$ such that $f_*\mathscr O_X = \mathscr O_Y$.
\end{df}

\begin{df}
    Let $f:X\dashrightarrow Y$ be a meromorphic map between compact varieties with domain $U$. We say $f$ is \textbf{almost holomorphic} if there is a $y\in Y$ such that $f^{-1}(y)$ is compact and contained in $U$.
\end{df}

We will need the following results:

\begin{thm}[Flattening Theorem]\cite[Theorem 4.4]{hironaka1975flattening}
    Let $f:X\rightarrow Y$ be a surjective morphism of compact varieties. Then there is a commutative diagram:
    $$\xymatrix{
    X'\ar[r]^\nu\ar[d]_{f'}&X\ar[d]^f\\
    Y'\ar[r]^\pi& Y
    }$$
    where $\pi,\nu$ are bimeromorphic morphisms and $f'$ is flat. We call $f'$ a \textbf{flat model}  of $f$.
\end{thm}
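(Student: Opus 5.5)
Since this is the flattening theorem quoted from \cite[Theorem 4.4]{hironaka1975flattening}, the paper simply invokes it. If I had to justify it, I would not use Hironaka's local flattening by blow-ups. I would instead try the relative Douady space, as follows.

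\textbf{Step 1: the flat locus.} By Frisch's theorem, the set of points of $X$ where $f$ is flat is Zariski open. Its complement maps properly onto a proper closed analytic subset of $Y$. Hence there is a dense Zariski open $V\subseteq Y$ over which $f$ is flat. Each fiber $X_y=f^{-1}(y)$ with $y\in V$ is then a compact subspace of $X$, and the family $\{X_y\}_{y\in V}$ is flat.

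\textbf{Step 2: the Douady space.} Let $D$ be the Douady space of compact subspaces of $X$, with universal family $\mathcal Z\subseteq X\times D$. Flatness over $V$ gives, by the universal property, a holomorphic map $\varphi:V\to D$ with $y\mapsto[X_y]$. I would let $Y'$ be the closure of the graph of $\varphi$ in $Y\times D$, and $\pi:Y'\to Y$ the projection.

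\textbf{Step 3 (the main obstacle).} The hard part is to show that this closure is a compact analytic set, so that $\varphi$ is meromorphic and $\pi$ is a proper bimeromorphic morphism. I would first try a volume bound. The fibers $X_y$ all have the same homology class, so they have bounded volume with respect to a fixed hermitian metric. Bishop's compactness theorem, as used in Fujiki's work on Barlet/Douady spaces, should then give that the relevant component of $D$ is compact in the Kähler (class $\mathcal C$) case. For an arbitrary compact analytic $X$ this may fail. There I would fall back on Hironaka's original argument: local flattening by blowing up suitable Fitting ideals, glued using the universal property of the flattening.

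\textbf{Step 4: the flat model.} Set $X'$ to be the pullback of $\mathcal Z$ to $Y'$, viewed as a closed subspace of $X\times Y'$. Let $f':X'\to Y'$ and $\nu:X'\to X$ be the projections; $f'$ is flat because the universal family is flat.

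It remains to check that $X'$ is a variety and that $\nu$ is bimeromorphic.
\begin{itemize}
\item Since $f'$ is flat over the reduced, irreducible base $Y'$, every associated point of $X'$ lies over the generic point of $Y'$. So $X'$ has no components or embedded components supported over $Y'-\pi^{-1}(V)$.
\item Over $\pi^{-1}(V)\cong V$ we have $X'\cong f^{-1}(V)$, which is reduced and irreducible.
\item Combining these two facts, $X'$ equals the closure of $f^{-1}(V)$, so it is reduced and irreducible.
\item $\nu$ is an isomorphism over $f^{-1}(V)$, hence $\nu$ is bimeromorphic.
\end{itemize}
Commutativity of the square holds by construction, which completes the argument.
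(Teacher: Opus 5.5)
You should know that the paper gives no proof of this statement; it only cites Hironaka, so your sketch has to stand on its own. Most of the frame is sound: Frisch's generic flatness, the section $\varphi\colon V\to D$, and Step 4. In Step 4, flatness over the reduced irreducible $Y'$ forces every associated subvariety of $X'$ to meet $f'^{-1}(\pi^{-1}V)$, and this gives reducedness, irreducibility, and bimeromorphy of $\nu$.

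\textbf{The gap is in Step 3.} You correctly identify the crux as showing that $\overline{\Gamma_\varphi}\subseteq Y\times D$ is compact \emph{and analytic}. However, you only argue that the component of $D$ containing $\varphi(V)$ is compact, and that does not imply analyticity. For example, take $\mathbb P^1\setminus\{0\}\to\mathbb P^1$, $z\mapsto e^{1/z}$: the closure of its graph contains $\{0\}\times\mathbb P^1$ and is not analytic.

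What is missing is that $\Gamma_\varphi$ is open in the relative Douady space $D(X/Y)$, which is a closed subspace of $D\times Y$ containing $\Gamma_\varphi$. The argument:
\begin{enumerate}
\item Let $Z\subseteq X\times_Y T$ be $T$-flat with $Z_{t_0}=X_{y_0}$ the whole fibre. Tensor $0\to\mathcal J\to\mathcal O_{X_T}\to\mathcal O_Z\to 0$ with $\mathbb C(t_0)$; flatness and Nakayama give $\mathcal J=0$ near $X_{t_0}$.
\item Hence $\rho\colon D(X/Y)\to Y$ is a local isomorphism at points of $\Gamma_\varphi$, so $\Gamma_\varphi$ is open.
\item $\Gamma_\varphi$ is also closed in $\rho^{-1}(V)$, being the image of a section.
\item Let $D_1$ be the irreducible component containing $\Gamma_\varphi$. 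The set $D_1\cap\rho^{-1}(V)$ is connected, being Zariski open in $D_1$, so it equals $\Gamma_\varphi$.
\item Therefore $\overline{\Gamma_\varphi}=D_1$, which is analytic.
\end{enumerate}

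Compactness of $D_1$ also does not follow from Bishop's theorem plus a volume bound. Bishop only controls cycles, i.e.\ the Barlet space. Passing to the Douady space additionally needs properness of the Douady--Barlet morphism; together these amount to Fujiki's closedness theorem for Douady spaces of compact K\"ahler spaces. The volume bound itself needs a closed K\"ahler form, not an arbitrary hermitian metric.

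Once repaired, your route proves the statement for compact K\"ahler $X$. That is the only case the paper actually uses ($X$ a compact K\"ahler manifold, $Y$ projective). The price is Fujiki's theorem, which is no more elementary than the flattening theorem itself.

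For an arbitrary compact variety, as in the statement, you give no argument: ``falling back on Hironaka'' is exactly the citation the paper makes. Also, the remark after the theorem says $\pi$ can be taken to be a sequence of blow-ups, so that $\pi,\nu$ are projective, and the paper relies on this later. That does not come out of your construction without Hironaka's Chow lemma, which is itself a corollary of the flattening theorem.
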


\begin{remark}
    Indeed, $\pi$ can be constructed as a succession of blowups along nonsingular centers. Thus we can always assume $\pi,\nu$ to be projective morphisms.
\end{remark}

For a morphism $f:X\to Y$ and an open subset $U\subseteq Y$, we write $X_U = f^{-1}(U)$.

\begin{prop}\cite[Lemma 2.24]{Analytic-SSR}\label{finite-cover-uniqueness}
    Let $X$ be a normal analytic variety and $U\subseteq X$ be a dense Zariski open. Let $\pi_i:X_i\to X$ be finite surjective morphisms from normal analytic varieties and $(X_1)_U\to (X_2)_U$ be an isomorphism over $U$. Then it extends uniquely to an isomorphism over $X$.
\end{prop}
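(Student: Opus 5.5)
The plan is to prove uniqueness first and then existence, working locally over $X$. Write $V_i=(X_i)_U=\pi_i^{-1}(U)$ and let $\phi:V_1\to V_2$ be the given isomorphism over $U$, so that $\pi_2\circ\phi=\pi_1$ on $V_1$.

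\emph{Uniqueness.} Any extension $\Phi:X_1\to X_2$ over $X$ agrees with $\phi$ on $V_1$. Since $\pi_1$ is finite and surjective and $X_1$ is irreducible, $V_1$ is a dense Zariski open subset of $X_1$. The space $X_2$ is Hausdorff and $X_1$ is reduced, so two morphisms $X_1\to X_2$ that agree on a dense open set agree everywhere. This settles uniqueness.

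\emph{Existence.} I will realise both $X_i$ as normalizations of $X$ in a common finite extension of meromorphic functions; this step is the heart of the argument.
\begin{enumerate}
\item Since $\pi_i$ is finite, $\mathcal A_i:=\pi_{i*}\mathscr O_{X_i}$ is a coherent $\mathscr O_X$-algebra. Because $X_i$ is normal and $\pi_i$ is finite, $X_i=\operatorname{Specan}_X\mathcal A_i$.
\item Normality gives the following description. For an open $W\subseteq X$, $\mathcal A_i(W)$ is exactly the set of elements of $\mathcal A_i(W\cap U)$ which are locally bounded near $W\setminus U$. Indeed, a holomorphic function on $\pi_i^{-1}(W)\cap V_i$ that is locally bounded near the nowhere dense analytic set $\pi_i^{-1}(W\setminus U)$ extends holomorphically, by the Riemann extension theorem on the normal space $X_i$. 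Properness of $\pi_i$ lets us move between local boundedness on $X_i$ and on $X$.
\item The isomorphism $\phi$ induces an isomorphism of $\mathscr O_U$-algebras $\phi^*:\mathcal A_2|_U\to\mathcal A_1|_U$. It preserves local boundedness near $X\setminus U$, because over a relatively compact neighbourhood $W$ of a point of $X\setminus U$ both $\pi_i^{-1}(W)$ are relatively compact. Hence $\phi^*$ maps sections bounded near $X\setminus U$ to such sections.
\item By step 2, $\phi^*$ therefore extends to an isomorphism $\mathcal A_2\cong\mathcal A_1$ of $\mathscr O_X$-algebras on all of $X$, and applying $\operatorname{Specan}_X$ gives an isomorphism $X_1\cong X_2$ over $X$.
\end{enumerate}

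The main obstacle is making the boundedness criterion in steps 2 and 3 precise. A locally bounded function on $V_i$ has to be bounded near every point of $\pi_i^{-1}(W\setminus U)$, not just on compact subsets of $V_i$. Properness of $\pi_i$ is exactly what converts boundedness over $X$ into boundedness on $X_i$. Normality of $X$ itself is used in step 4 to ensure that the algebras are determined by their restriction to the big or dense open $U$ together with integrality. If the Riemann extension route becomes messy, I would instead fall back on the characterization of $\mathcal A_i$ as the integral closure of $\mathscr O_X$ in $\mathcal A_i\otimes\mathscr K_X$, which is a finite product of fields of meromorphic functions on the components. That closure is canonically determined by its restriction to $U$.
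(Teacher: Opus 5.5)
The paper contains no argument of its own for this statement. It quotes it from \cite[Lemma 2.24]{Analytic-SSR} and uses it as a black box, so your proposal can only be judged on its own terms. On those terms it is correct.

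Uniqueness follows because $(X_1)_U$ is dense: $\pi_1^{-1}(X\setminus U)$ has dimension at most $\dim(X\setminus U)<\dim X_1$. Together with $X_2$ Hausdorff and $X_1$ reduced, this settles it.

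For existence, the key steps all hold:
\begin{itemize}
\item Sections of $\pi_{i*}\mathscr O_{X_i}$ over $W$ are exactly the sections over $W\cap U$ that are locally bounded near $\pi_i^{-1}(W\setminus U)$. This is the Riemann extension theorem on the normal space $X_i$; properness of $\pi_i$ lets you phrase boundedness over $X$.
\item The map $\phi^*$ preserves this boundedness, since $\phi$ lies over $U$ and $\pi_2^{-1}$ of a compact set is compact.
\item Restriction to $U$ is injective, so the extended maps $\mathscr O_X$-algebra homomorphisms $\pi_{2*}\mathscr O_{X_2}\rightleftarrows\pi_{1*}\mathscr O_{X_1}$ are mutually inverse.
\item Applying $\operatorname{Specan}_X$ then yields the isomorphism over $X$ restricting to $\phi$.
\end{itemize}

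Two of your side remarks are inaccurate, though neither affects the proof:
\begin{itemize}
\item The identification $X_i=\operatorname{Specan}_X\pi_{i*}\mathscr O_{X_i}$ holds for every finite map; it does not depend on normality.
\item Normality of $X$ is never used anywhere in your argument. Only normality of $X_1$ and $X_2$ enters, through Riemann extension. So your closing claim that normality of $X$ is needed in step 4 is spurious, and the fallback via integral closure in $\mathcal A_i\otimes\mathscr K_X$ is unnecessary. In fact your argument proves the statement for arbitrary reduced irreducible $X$.
\end{itemize}

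For comparison, a common alternative is the graph route: the closure of the graph of $\phi$ in $X_1\times_X X_2$ is finite and bimeromorphic over each normal $X_i$, hence isomorphic to each. That route rests on the same normality principle, but it needs the analyticity of the closure. Your sheaf-level version avoids that point at the cost of some bookkeeping.
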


\subsection{Positivity in K\"ahler Geometry}

In this section, we collect some basic notions of bimeromorphic geometry.  More detailed discussions can be found in \cite{1} and \cite{3}.

\begin{df}
    An analytic variety $X$ is said to be a \textbf{K\"ahler variety}  if $X$ admits a positive closed $(1,1)$-form $\omega$,  called the \textbf{K\"ahler form}, such that for every point $x\in X$, there exists an open neighborhood $x\in U\subseteq X$ and a closed immersion $i:U\to V$ where $V\subseteq \mathbb C^N$ is an open subset, and a strictly plurisubharmonic smooth function $f:V\to\mathbb C$ with $\omega_{U\cap X^{sm}} = (i\p\overline{\p}f)|_{U\cap X^{sm}}$. 
\end{df}

\begin{prop}\cite[Remark 2.3]{1}
    Let $X$ be a compact K\"ahler variety.
    \begin{enumerate}
        \item Any subvariety of $X$ is K\"ahler;
        \item If there is  a projective morphism  $Y\to X$, then $Y$ is K\"ahler.
    \end{enumerate}
\end{prop}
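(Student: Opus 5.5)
For part (1), I will take a subvariety $Z\subseteq X$ and restrict the Kähler form $\omega$ of $X$, using the local potentials. At a point $z\in Z$, choose the neighborhood $U\ni z$, the closed immersion $i:U\to V\subseteq\mathbb C^N$ and the strictly plurisubharmonic function $f$ from the definition for $X$. Then $Z\cap U\to U\to V$ is again a closed immersion into the same open set $V$, and $f$ stays smooth and strictly plurisubharmonic there. Set $\omega_Z:=(i\p\overline{\p}f)|_{Z\cap U\cap Z^{sm}}$. These local forms glue to a global form, because on $X$ any two local potentials differ by pluriharmonic functions, and restriction preserves this. The resulting form is closed and positive on $Z^{sm}$, since it is the pullback of a Kähler form under an immersion. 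This part is routine.

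For part (2), let $p:Y\to X$ be projective. By definition, every point $x\in X$ has a neighborhood $W$ such that $p^{-1}(W)$ embeds over $W$ as a closed subspace of $W\times\mathbb P^M$. Globally, $Y$ carries a $p$-ample line bundle $L$. I will choose a hermitian metric $h$ on $L$ whose curvature $\theta$ is positive on the fibers of $p$. Locally, $\theta$ can be obtained by pulling back the Fubini--Study form through the embedding into $W\times\mathbb P^M$, and then patched together with a partition of unity on $X$. The candidate Kähler form is
\[
\omega_Y=p^*\omega_X+\varepsilon\theta, \qquad 0<\varepsilon\ll 1 .
\]
I will check positivity locally. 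On $p^{-1}(W)\subseteq W\times\mathbb P^M$, the local potentials are $f\circ p+\varepsilon\varphi$, where $f$ is a strictly psh potential for $\omega_X$ near $x$ and $\varphi$ is a potential for $\theta$. The function $f\circ p$ is strictly psh in the horizontal directions and $\varphi$ is strictly psh along the $\mathbb P^M$ directions. So for small $\varepsilon$, the sum is strictly psh on $W\times\mathbb P^M$ near $p^{-1}(\overline{W'})$, for a slightly smaller $W'$. Compactness of the fibers lets one $\varepsilon$ work uniformly over a relatively compact piece of $W$. It also makes the ambient space fit the definition: cover $\mathbb P^M$ by affine charts, so that $W\times\mathbb C^M\subseteq\mathbb C^{N+M}$ is the required open subset.

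The main obstacle is globalizing $\varepsilon$ when $X$ is not compact. If $X$ is compact, finitely many $W$ suffice and a single $\varepsilon$ works. In general I will replace the constant $\varepsilon$ by $p^*$ of a positive smooth function $\varepsilon(x)$ on $X$, chosen small on each member of a locally finite cover. Doing so introduces error terms $d\varepsilon\wedge d^c$-type terms. I will absorb these either by making $\varepsilon$ decay slowly, or by following the standard reduction in \cite{1} to the case of compact $X$, which is the only case used in this paper. A second subtle point is that $\theta$ must be built from a global metric on $L$ rather than glued Fubini--Study forms, so that closedness holds. I will therefore define $h$ globally and only use the local embeddings to verify fiberwise positivity.
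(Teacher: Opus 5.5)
The paper gives no argument here; it quotes \cite[Remark 2.3]{1}. The standard argument behind that reference has the same shape as yours: restrict local potentials for (1), and use $p^*\omega_X+\varepsilon\theta$ with $\theta$ coming from a $p$-ample line bundle for (2). Part (1) is fine.

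In (2) there is a gap. The step that carries all the content is a metric $h$ on $L$ whose curvature is positive along the fibers, and you never construct it; the constructions you hint at fail. Patching Fubini--Study forms is not closed, as you note. Patching the local Fubini--Study metrics $h_j=e^{-\varphi_j}$ on $L$ by a partition of unity also fails. The resulting weight $-\log\sum_j\rho_j e^{-\varphi_j}$ is a smoothed minimum, and along a fiber its Levi form is $\sum_j\mu_j\,i\partial\bar\partial\varphi_j$ minus a variance term, which can be negative. For example, on $\mathcal O_{\mathbb P^1}(1)$, averaging with equal weights the Fubini--Study metrics of the bases $(X_0,X_1)$ and $(X_0,X_1-aX_0)$ gives curvature $\frac{1-|a|^2/4}{(1+|a|^2/4)^2}\,i\,dw\wedge d\bar w$ at $w=a/2$, which is negative for $|a|>2$. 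Saying you will ``define $h$ globally and use the local embeddings to verify positivity'' does not say what $h$ is.

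What works is to average the \emph{dual} metrics on $L^{-1}$. Let $s^{(j)}_0,\dots,s^{(j)}_{M_j}$ be the sections of $L$ over $p^{-1}(W_j)$ defining the embedding $\iota_j$, and $e$ a local frame. Take the weight $\varphi=\log\sum_j(\rho_j\circ p)\sum_a|s^{(j)}_a/e|^2$, which is a smoothed maximum. Since $\rho_j\circ p$ is pulled back from the base, it contributes nothing to $\partial\varphi$ or $\partial\bar\partial\varphi$ on vertical vectors. So on $\ker(dp_y)\cap T_yY$, the form $i\partial\bar\partial\varphi$ is the pullback of a Fubini--Study form by a map whose components include the coordinates of some $\iota_j$ with $\rho_j(p(y))>0$. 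Hence it is positive definite there.

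Two further points are missing.
\begin{enumerate}
\item Positivity is needed on $\ker(dp_y)\cap T_yY$, with $T_yY$ the Zariski tangent space, not merely along reduced fibers. For the finite map $\{w^2=z\}\to\mathbb C_z$, every line bundle is $p$-ample and the trivial metric is vacuously positive on the point fibers. Yet $p^*\omega_X$ vanishes in the $w$-direction at the origin.
\item Your local check treats $W\times\mathbb P^M$ as a manifold. The definition asks for a strictly psh function on the open set $V\times\mathbb C^M$. Your weight, however, is a function on $Y$ known to be positive only along $\ker dp\cap T_yY$, not along all ambient vertical directions.
\end{enumerate}
The standard repair for the second point runs in three steps.
\begin{enumerate}
\item Extend $\varphi$ to $\tilde\varphi$ on $V\times\mathbb C^M$ by the same formula, using holomorphic extensions of $s^{(j)}_a/e$ and smooth extensions of $\rho_j$ to $V$.
\item Let $h_l$ be local generators of the ideal of $Y$ and $K$ a compact set. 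The set $\{(y,v): y\in Y\cap K,\ v\in T_yY,\ |v|=1\}$ is compact, because $T_yY=\bigcap_l\ker dh_l(y)$. Use this to choose $\varepsilon$ so that $i\partial\bar\partial(f\circ\mathrm{pr}_1+\varepsilon\tilde\varphi)$ is positive on every $T_yY$.
\item Add $C\sum_l|h_l|^2$. This does not change the function on $Y$, and its Levi form along $Y$ is semipositive with kernel exactly $T_yY$. So for $C\gg0$ the sum is strictly psh near $Y$.
\end{enumerate}
Finally, the difficulty you single out, non-compactness of $X$, does not arise. $X$ is compact by hypothesis, so finitely many $W_j$ and a single $\varepsilon$ suffice once the points above are handled.
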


\begin{remark}\label{kaehlerness-perserving}
    As a result, we can perform most bimeromorphic operations, such as base changes and resolutions of singularities, without leaving the K\"ahler category. We will implicitly use this property throughout the paper.
\end{remark}

\begin{df}
    A class $u\in H^{1,1}(X,\mathbb R)$ is called a \textbf{nef} class if there is a  $(1,1)$-form $\omega> 0$ such that for every $\epsilon>0$, there is a representative $\alpha$ of the class $u$ satisfying $\alpha+\epsilon\omega\geq 0$. 
\end{df}

As one can expect, this definition is compatible with the usual notion of being nef for projective varieties: Let $D$ be a divisor on a projective manifold $X$. Then $D$ is algebraically nef if and only if $D$ is analytically nef \cite[6.10]{2}.

Many formal properties of nefness still hold for K\"ahler varieties. We will need the pullback property and subvariety test criterion:

\begin{thm}\label{pullbacknef}\cite[2.38]{3}
    Let $\pi:X\rightarrow Y$ be a proper surjective morphism between compact K\"ahler varieties. Then $\alpha$ is nef if and only if $\pi^*\alpha$ is nef.
\end{thm}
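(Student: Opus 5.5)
The easy direction is that nefness is preserved under pullback; the converse, that nefness of $\pi^*\alpha$ forces nefness of $\alpha$, is the real content. The pullback $\pi^*\alpha$ is understood via local potentials: a class in $H^{1,1}$ on a possibly singular variety is represented by a closed $(1,1)$-form that locally is $i\partial\overline\partial$ of a smooth function modulo pluriharmonic ones, and such representatives pull back.

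For the forward direction, fix a Kähler form $\omega_X$ on $X$ and a Kähler form $\omega_Y$ on $Y$. Since $Y$ is compact, $\pi^*\omega_Y$ is a smooth semipositive form, so there is a constant $C>0$ with $\pi^*\omega_Y\le C\omega_X$. Given $\epsilon>0$, choose a representative $\alpha_\epsilon$ of $\alpha$ with $\alpha_\epsilon+\epsilon\omega_Y\ge0$. Then $\pi^*\alpha_\epsilon+C\epsilon\,\omega_X\ge \pi^*(\alpha_\epsilon+\epsilon\omega_Y)\ge0$, so $\pi^*\alpha$ is nef.

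For the converse, I would first reduce to the case where $\pi$ has a local section over a dense open set. Take a resolution of $X$ and then cut down by general hypersurfaces. Hyperplane sections are not available, so instead I would use the following construction. By generic smoothness there is a Zariski open $Y^0\subseteq Y$ over which $\pi$ is smooth on $X^{sm}$. Choose a subvariety $W\subseteq X$, for instance obtained from an iterated Stein factorization or a multisection built from a Douady-space component, that maps generically finitely onto $Y$. Since $W$ is Kähler and $\pi^*\alpha|_W$ is nef, it suffices to treat the case where $\pi$ is generically finite. A further Stein factorization splits $\pi$ into a finite map followed by a bimeromorphic one.

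The key tool in the generically finite case is the characterization of nefness through currents. By Demailly's theorem, $\alpha$ is nef if and only if for each $\epsilon>0$ there is a closed $(1,1)$-current $T_\epsilon\in\alpha$ with $T_\epsilon\ge-\epsilon\omega$ whose potentials have analytic singularities contained in any prescribed proper analytic subset. On $X$, take smooth representatives $\beta_\epsilon$ of $\pi^*\alpha$ with $\beta_\epsilon\ge-\epsilon\omega_X$. Push them forward: $\pi_*\beta_\epsilon$ is a current in the class $(\deg\pi)\,\alpha$ that is $\ge -\epsilon\,\pi_*\omega_X$ (in the finite case the trace of local potentials is continuous psh-type). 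The current $\pi_*\omega_X$ is dominated by $C\omega_Y$ outside the branch/exceptional locus, and it has bounded local potentials (Varouchas). Hence $\alpha$ has positive currents with arbitrarily small negative part and continuous, indeed bounded, potentials. Demailly's regularization with bounded potentials then produces smooth representatives $\ge-\epsilon'\omega_Y$, so $\alpha$ is nef.

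I expect the bimeromorphic case to be the main obstacle, because there $\pi_*\omega_X$ is not bounded by $\omega_Y$ near the exceptional locus and the potentials obtained can fail to be bounded. For that case I would use the criterion of Păun, which holds on compact Kähler spaces: a class whose restriction to every irreducible analytic subvariety $Z\subseteq Y$ is pseudoeffective in the appropriate sense is nef. Every $Z$ is dominated by a subvariety $Z'\subseteq X$ with $Z'\to Z$ generically finite, and the finite argument above applies to $Z'\to Z$. This gives the converse by induction on dimension.
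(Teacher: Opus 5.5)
\textbf{Genuine gap: your argument needs multisections, and they need not exist.}

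Your forward direction is fine. The finite case can also be made to work. What you actually use there is that the trace of smooth local potentials is continuous, so Richberg-type regularization applies even on a singular $Y$. Note, though, that $\pi_*\omega_X$ is \emph{not} bounded by $C\omega_Y$ near the branch locus: already for $z\mapsto z^2$ it has a $|w|^{-1}$ singularity.

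The gap is that your argument requires multisections in two places:
\begin{enumerate}
\item for the reduction to a subvariety $W\subseteq X$ mapping generically finitely onto $Y$;
\item in the bimeromorphic case, where for every $Z\subseteq Y$ you need a subvariety $Z'\subseteq X$ generically finite over $Z$.
\end{enumerate}
In the analytic category such subvarieties need not exist, and neither Stein factorization nor the Douady space can produce them. For instance, let $X$ be an elliptic K3 surface of algebraic dimension one, which is K\"ahler, and let $\pi\colon X\to\mathbb P^1$ be its elliptic fibration. Suppose an irreducible curve $C$ were not contained in a fibre. Then for a fibre $F$ we would have $C\cdot F>0$, hence $(C+mF)^2=C^2+2m\,C\cdot F>0$ for $m\gg0$, which forces $X$ to be projective. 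So every curve lies in a fibre, and there is no $W$ at all. Likewise, over a non-projective $Z$ nothing guarantees a subvariety of $\pi^{-1}(Z)$ that is generically finite over $Z$, even when $\pi$ is projective. As written, your proof only covers situations where multisections happen to exist.

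\textbf{How to repair it.} Avoid multisections by integrating along the fibres. Then the subvariety criterion (Theorem~\ref{neftest}) does all the work, and your regularization step and induction become unnecessary. This criterion comes from the same source \cite{3} that the paper cites for this statement; the paper itself gives no proof.

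Given an irreducible $Z\subseteq Y$, pick an irreducible component $W$ of $\pi^{-1}(Z)$ with $\pi(W)=Z$. Resolve to get a surjective morphism $g\colon\tilde W\to\tilde Z$ of compact manifolds with $\tilde W$ K\"ahler, and put $k=\dim\tilde W-\dim\tilde Z$. Since $g^*\alpha$ is nef, it is pseudoeffective. So it contains a closed positive current $T=g^*\theta+i\partial\overline\partial\varphi$, where $\theta$ is a smooth representative of $\alpha|_{\tilde Z}$.

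By the projection formula,
$g_*(T\wedge\omega^k)=\theta\wedge g_*(\omega^k)+i\partial\overline\partial\, g_*(\varphi\,\omega^k)$,
and this is a closed positive $(1,1)$-current. Moreover, $g_*(\omega^k)$ is a $d$-closed current of degree $0$, hence equal to the constant $c=\int_F\omega^k>0$, the volume of a general fibre. Therefore $c\,\alpha|_{\tilde Z}$ is pseudoeffective. Pushing forward to $Z$, $\alpha|_Z$ is pseudoeffective for every $Z$, and so $\alpha$ is nef.
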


\begin{df}
    A class $\alpha\in H^{1,1}(X,\mathbb R)$ is called \textbf{pseudoeffective} if $\alpha$ has a representative $T$ that is a  closed positive $(1,1)$-current, that is $[T] = \alpha$.  Let $\mathscr E(X)$ denote the pseudoeffective cone. 
\end{df}

In general, on an analytic variety, curves are scarce. As a result, instead of testing nefness using curves, one should use subvarieties:

\begin{thm}\label{neftest}\cite[2.36, 2.37]{3}
    Let $X$ be a compact analytic variety and $\alpha$ be a closed $C^\infty$-$(1,1)$-form. Then $\alpha$ is nef if and only if $\alpha|_Z$ is pseudoeffective for all subvarieties $Z\subseteq X$.
\end{thm}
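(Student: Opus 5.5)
The plan is to prove the two implications of Theorem \ref{neftest} separately, reducing everything to the positivity of restrictions of $\alpha$.

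\emph{The easy direction.} Suppose $\alpha$ is nef. Fix a subvariety $Z\subseteq X$, take a resolution $\mu:\tilde Z\to Z$, and let $\iota:Z\hookrightarrow X$ be the inclusion. By the definition of nefness, for every $\epsilon>0$ there is a smooth representative $\alpha_\epsilon$ of $[\alpha]$ with $\alpha_\epsilon+\epsilon\omega\ge 0$. Pulling back by $\iota\circ\mu$ gives smooth forms $\mu^*\iota^*\alpha_\epsilon$ in the fixed class $\mu^*\iota^*[\alpha]$, each bounded below by $-\epsilon\,\mu^*\iota^*\omega$.

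Fix a Hermitian form $\omega_{\tilde Z}$ on $\tilde Z$, normalized so that $\mu^*\iota^*\omega\le\omega_{\tilde Z}$. The currents $\mu^*\iota^*\alpha_\epsilon+\epsilon\,\omega_{\tilde Z}$ are then positive. Their masses are bounded uniformly, because they are cohomologous up to a term of order $\epsilon$ and $\tilde Z$ is compact Kähler. A weak limit as $\epsilon\to 0$ is therefore a closed positive current in $\mu^*\iota^*[\alpha]$. Pushing it forward by $\mu$ shows that $\alpha|_Z$ is pseudoeffective. (On singular $Z$, we interpret pseudoeffectivity through currents with local potentials, or equivalently through the resolution.)

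\emph{The hard direction.} Assume $\alpha|_Z$ is pseudoeffective for every subvariety $Z$. We prove that $\alpha|_Z$ is nef for every $Z$, by induction on $\dim Z$; the case $Z=X$ is the claim. When $\dim Z=0$ there is nothing to prove.

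For the inductive step, $\alpha|_Z$ is pseudoeffective, so it contains a closed positive current $T$. By Demailly regularization on a resolution of $Z$ (or on $Z$ itself, using local embeddings), we can approximate $T$ by currents $T_\epsilon\ge -\epsilon\omega$ that have analytic singularities. Their singular loci are contained in the Lelong upper-level sets $E_c(T)$, which are proper analytic subsets of $Z$ by Siu's theorem.

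Set $W=\bigcup_c E_c(T)$; this is a countable union of proper subvarieties. By induction, $\alpha$ is nef on each component of each $E_c(T)$. We then glue the two sources of positivity: on $Z\setminus W$ the approximations are smooth and almost positive, and near $E_c(T)$ we use the nef representatives supplied by induction. The gluing is done by a regularized maximum of local potentials, following Demailly and Păun, and produces smooth representatives of $\alpha|_Z$ bounded below by $-\epsilon\omega$.

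\emph{Expected obstacle.} The main difficulty is this gluing step. Nefness on the subvariety $E_c$ gives only representatives defined on $E_c$. We must extend these potentials to a neighbourhood of $E_c$ in $Z$ while losing only $\epsilon\omega$ of positivity, and we must also handle the singularities of $Z$. For the extension I would first try Coltoi and Păun's extension theorem for strictly plurisubharmonic functions from subvarieties, applied to the potentials $\varphi_\epsilon$ with $\alpha|_{E_c}+\epsilon\omega+i\partial\bar\partial\varphi_\epsilon>0$. Combining the extension with $\max_{reg}(\varphi_\epsilon,\ \psi+C\log|s|)$, where $\psi$ is the potential of $T_\epsilon$, keeps the result smooth across $E_c$. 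A further technical point is that only finitely many levels $c$ are needed, because for fixed $\epsilon$ the singularities of $T_\epsilon$ lie in $E_{\epsilon'}(T)$ for some $\epsilon'$ depending on $\epsilon$.
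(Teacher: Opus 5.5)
The paper does not prove this statement; it quotes it from \cite[2.36, 2.37]{3}, where it goes back to P\u{a}un's nefness criterion, and your outline is exactly that standard proof: pull back and take weak limits for the easy direction, and for the converse use induction on $\dim Z$, Demailly regularization with analytic singularities whose poles lie in some $E_c(T)$, the induction hypothesis on that pole set, extension of strictly plurisubharmonic potentials to a neighbourhood, and a regularized maximum. Two details need care. First, the gluing should be $\max_{\mathrm{reg}}(\psi_\epsilon,\tilde\varphi_\epsilon-A)$ with a large constant $A$: $\psi_\epsilon$ already tends to $-\infty$ along its poles, whereas adding $C\log|s|$ with $C$ not small costs $C$ times a fixed smooth form and ruins the bound $-\epsilon\omega$. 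Second, for singular $Z$ a regularization carried out on a resolution $\mu:\tilde Z\to Z$ does not descend by itself, because the error $-\epsilon\omega_{\tilde Z}$ is not bounded below by any multiple of $-\mu^*\omega$; this is fixed by twisting the regularized potentials by a small multiple of $\log|s_E|^2$, where $E$ is a $\mu$-exceptional divisor with $-E$ relatively ample (taking $\omega_{\tilde Z}=\mu^*\omega+c\,\Theta_h(\mathcal O_{\tilde Z}(-E))$ and adding $c\,\epsilon\log|s_E|_h^2$). The twist turns the error into $-\epsilon\mu^*\omega$ and makes the potentials $-\infty$ over $\mu(E)$, so they descend to $Z$, and $\mu(E)$ simply joins the locus handled by induction.
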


\subsection{Algebraic Reduction}

In this section, we will briefly go over some basic properties of the algebraic reduction map. The readers can find a detailed discussion in \cite[Section 3]{ueno75}.

Let $X$ be a compact analytic variety. The notion of algebraic dimension describes the richness of meromorphic functions on an analytic variety:

\begin{df}
    The function field $\mathbb C(X)$ of $X$ is the field of all meromorphic functions.
    The \textbf{algebraic dimension}  of $X$ is defined as
    \[
        a(X) := \operatorname{trdeg}_\mathbb C(\mathbb C(X)).
    \]
\end{df}

By definition of meromorphic functions, the function field $\mathbb C(X)$ is a bimeromorphic invariant of compact analytic spaces.

\begin{remark}
    In contrast to birational geometry,  function fields are no longer bimeromorphic invariants of analytic varieties that are not compact. For example, $\mathbb C(\mathbb C)\neq \mathbb C(\mathbb CP^1)$. This is because functions may have essential singularities.
\end{remark}

The algebraic dimension describe how far an analytic variety is from being algebraic:

\begin{prop}
    Let $X$ be a compact analytic variety. 

    (1) $\mathbb C(X)/\mathbb C$ is a finitely generated extension. Furthermore, we have $$a(X)\leq \operatorname{dim}(M)$$

    (2) If $X$ is a K\"ahler manifold, then $X$ is projective if and only if $a(X) = \operatorname{dim}_\mathbb C(X)$.
\end{prop}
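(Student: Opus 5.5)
The plan is to treat the two parts separately. Part (1) is Siegel's theorem, proved with Schwarz-lemma estimates. Part (2) follows from Moishezon's theorem combined with the fact that a Moishezon K\"ahler manifold is projective. (The ``$M$'' in (1) should read $X$.)

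For (1), I first normalize and desingularize, replacing $X$ by a compact connected complex manifold. Meromorphic functions pull back injectively under a proper modification, so this changes neither $\mathbb C(X)$ nor the dimension. Set $n=\dim X$ and take meromorphic functions $f_1,\dots,f_{n+1}$; I want a nontrivial polynomial relation $P(f_1,\dots,f_{n+1})=0$. Write $f_i=g_i/h_i$ locally and cover $X$ by finitely many coordinate polydiscs $U_\alpha$ on which there are common denominators $h_\alpha$. On overlaps these denominators are related by holomorphic multiplicative transition functions, since the $h_\alpha$ define the divisor of poles, i.e.\ a line bundle $L$. A polynomial of degree $\le k$ in the $f_i$ then becomes a holomorphic section of $L^{k}$. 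The space of polynomials of degree $\le k$ has dimension $\binom{k+n+1}{n+1}\sim c\,k^{n+1}$. I will show that the space of such sections that actually arise satisfies $\dim H^0(X,L^k)\le C k^n$ for some constant $C$. Then for large $k$ the linear map from polynomials to sections has a kernel, which gives the desired relation.

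The key step is the estimate $h^0(X,L^k)=O(k^n)$, and I expect it to be the main obstacle. I will use Siegel's argument. Fix shrunken polydiscs $V_\alpha\Subset U_\alpha$ that still cover $X$, choose finitely many points $x_j$, and use the Schwarz lemma: if a section vanishes to order $m$ at every $x_j$, then its sup-norm on $\bigcup V_\alpha$ is bounded by $(\text{const})^k\,\theta^m$ times itself, with $\theta<1$, where the constant bounds the transition functions. Taking $m\approx Ak$ with $A$ large enough that $(\text{const})\,\theta^{A}<1$ forces the section to vanish identically. Vanishing to order $m$ at finitely many points imposes $O(m^n)=O(k^n)$ linear conditions, which gives the bound.

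For (2), the direction projective $\Rightarrow a(X)=n$ is immediate: rational functions from an embedding separate generic points, so by Chow/GAGA $\mathbb C(X)=\mathbb C(X)_{\mathrm{alg}}$, and this field has transcendence degree $n$. For the converse, assume $a(X)=n$. Moishezon's theorem gives a projective manifold $X'$ with a bimeromorphic morphism $X'\to X$, obtained by resolving the graph of the map defined by $n$ algebraically independent functions together with a primitive element. Since $X$ is K\"ahler and Moishezon, I then show that $X$ is projective. The standard route is to observe that $H^{2,0}(X)=H^{2,0}(X')$ is a bimeromorphic invariant, and that the Néron--Severi classes span the whole of $H^{1,1}$, because $H^{1,1}(X')$ is spanned by algebraic classes and pushforward to $X$ is surjective on $H^{1,1}$. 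Hence $h^{2,0}(X)$ must vanish on transcendental parts, and the rational classes are dense in $H^{1,1}(X,\mathbb R)$. The Kähler cone is open, so it contains a rational class, and Kodaira's embedding theorem then shows that $X$ is projective. The delicate point here is the density of rational $(1,1)$-classes. I will get it by noting that $H^2(X,\mathbb Q)\cap H^{1,1}$ has full rank, using the Hodge decomposition on $X$ and the injectivity of $\nu^*:H^2(X)\to H^2(X')$ for a modification of compact Kähler manifolds. Since $H^2(X')$ is spanned by algebraic classes modulo $H^{2,0}\oplus H^{0,2}$, its sub-Hodge structure $\nu^*H^2(X)$ inherits rational $(1,1)$-classes that span $H^{1,1}(X)$.
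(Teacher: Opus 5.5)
Your proposal has two genuine gaps. For comparison, the paper does not argue this at all: it cites Ueno, Theorem 3.1 for (1) and Remark 3.7(2) for (2), the latter being Moishezon's theorem.

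\textbf{Part (2): the density step is false.} The Lefschetz $(1,1)$ theorem says every rational class of type $(1,1)$ is algebraic. It does not say that such classes span $H^{1,1}$. Take a projective K3 surface of Picard number one: there $NS\otimes\mathbb R$ is a line inside the $20$-dimensional space $H^{1,1}(X,\mathbb R)$. A very general abelian surface behaves the same way ($\rho=1$, $h^{1,1}=4$).

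So three of your claims already fail for projective manifolds, even with $X=X'$:
\begin{itemize}
\item $H^{1,1}(X')$ is spanned by algebraic classes;
\item $H^2(X')$ is spanned by algebraic classes modulo $H^{2,0}\oplus H^{0,2}$;
\item the sub-Hodge structure $\nu^*H^2(X)$ has rational $(1,1)$-classes spanning $H^{1,1}(X)$.
\end{itemize}
Because of this, openness of the K\"ahler cone gives you nothing. What you need is a K\"ahler class inside the typically proper subspace $NS(X)_{\mathbb R}\subseteq H^{1,1}(X,\mathbb R)$. The density argument works only when $h^{2,0}=0$, which is Kodaira's criterion, and a Moishezon manifold need not satisfy that. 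Finding a K\"ahler class in $NS(X)_{\mathbb R}$ is exactly the content of Moishezon's theorem that a Moishezon manifold carrying a K\"ahler metric is projective. You should invoke that theorem, as Ueno's remark does, instead of the Hodge-theoretic shortcut.

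\textbf{Part (1): finite generation is never addressed.} Your Siegel/Schwarz-lemma estimate is a sound route to $a(X)\le\dim X$. But finite transcendence degree does not imply finite generation; $\overline{\mathbb C(t)}$ is a counterexample. Let $f_1,\dots,f_a$ be a transcendence basis and $K=\mathbb C(f_1,\dots,f_a)$. You need a bound on $[K(g):K]$ that is uniform in $g\in\mathbb C(X)$. Your $L^k$-estimate cannot give this, because the relevant line bundle would depend on the poles of $g$. A standard argument:
\begin{enumerate}
\item After resolving indeterminacy, $\Phi=(f_1,\dots,f_a)$ becomes a morphism to $\mathbb P^a$. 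It is surjective: the image is algebraic by Remmert--Chow, and a proper image would give a polynomial relation among the $f_i$.
\item On a general fiber of $\Phi$, $g$ satisfies a polynomial with constant coefficients, so it is locally constant there.
\item Since the minimal polynomial of $g$ is irreducible, $g$ takes $[K(g):K]$ distinct values on a general fiber. Hence $[K(g):K]$ is at most the number of connected components of a general fiber.
\item Choose $g$ of maximal degree. By the primitive element theorem, $\mathbb C(X)=K(g)$.
\end{enumerate}
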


\begin{proof}
    (1) is \cite[Theorem 3.1]{ueno75} and (2) is \cite[Remark 3.7(2)]{ueno75}.
\end{proof}

Statement (1) shows there is a projective variety with rational field $\mathbb C(X)$. This projective variety is birationally unique because the birational type of an algebraic variety is entirely determined by its function field. In fact, there is a natural map from $X$ to such a projective variety, called the algebraic reduction map:

\begin{prop}\cite[Definition 3.2]{ueno75}
    For a compact analytic variety $X$, there is a diagram:
    $$\xymatrix{
    X&M\ar[l]_\mu\ar[d]^\psi\\
    &V
    }$$
    where $M$ is a compact complex manifold, and $V$ is a nonsingular projective variety of dimension $a(X)$ such that:

    (1) $\psi$ is surjective and $\mu$ is bimeromorphic;

    (2) $\psi^*:\mathbb C(V)\rightarrow \mathbb C(M)$ is an isomorphism.
\end{prop}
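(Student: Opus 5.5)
We will construct the diagram from the function field $\mathbb C(X)$, which is finitely generated by part (1) of the preceding proposition. Choose meromorphic functions $f_0=1,f_1,\dots,f_N\in\mathbb C(X)$ that generate $\mathbb C(X)$ over $\mathbb C$; this is possible since the extension is finitely generated. They define a meromorphic map $\Phi=[f_0:\dots:f_N]:X\dashrightarrow\mathbb P^N$. Let $\Gamma\subseteq X\times\mathbb P^N$ be the closure of the graph of $\Phi$ over the dense Zariski open set where all the $f_i$ are holomorphic. Because $\Phi$ is meromorphic, $\Gamma$ is an irreducible compact analytic subvariety and the first projection $\Gamma\to X$ is bimeromorphic.

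By Remmert's proper mapping theorem, the image $W$ of $\Gamma$ in $\mathbb P^N$ is an irreducible closed analytic subvariety. By Chow's theorem, $W$ is projective algebraic. Take $V\to W$ to be a resolution of singularities, so $V$ is a nonsingular projective variety. Then take a resolution $M$ of the closure of the graph of the meromorphic map $\Gamma\dashrightarrow V$, arranged so that $M$ is a compact complex manifold. This gives a bimeromorphic morphism $\mu:M\to X$ and a surjective morphism $\psi:M\to V$, which is property (1).

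For (2), the pullback $\psi^*:\mathbb C(V)\to\mathbb C(M)$ is injective because $\psi$ is surjective, and $\mathbb C(M)=\mathbb C(X)$ because $\mu$ is bimeromorphic. The rational coordinate functions $y_i/y_0$ on $W\subseteq\mathbb P^N$, and hence on $V$, pull back to $f_i$. So the image of $\psi^*$ contains the generators $f_1,\dots,f_N$. Since it is a subfield, it equals $\mathbb C(X)$, and $\psi^*$ is an isomorphism. It follows that $\dim V=\operatorname{trdeg}\mathbb C(V)=a(X)$.

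The main obstacle is the legitimacy of the graph and image constructions in the analytic setting. Specifically, one must check that the closure of the graph of a meromorphic map is an analytic subvariety, proper and bimeromorphic over $X$, and that the image is algebraic; these follow from Remmert's theorem and Chow's theorem. One must also check that $\psi^*$ is well defined on meromorphic functions, which requires $\psi$ to be surjective so that pullbacks of rational functions are not identically undefined. The generation argument itself is then purely field-theoretic.
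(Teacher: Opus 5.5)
Your argument is correct and is essentially the standard existence proof of the algebraic reduction in Ueno, which the paper cites without reproducing. That proof takes generators $1,f_1,\dots,f_N$ of $\mathbb C(X)$, forms the meromorphic map to $\mathbb P^N$, uses Remmert and Chow to see that its image is projective, and then resolves the image and the graph; your only imprecision is cosmetic, since injectivity of $\psi^*$ is automatic for a field homomorphism and surjectivity of $\psi$ is needed only for $\psi^*$ to be defined, as you note at the end.
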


\begin{df}
    For a compact analytic variety $X$, we call any meromorphic map $X\dashrightarrow V$ which satisfies the properties in the previous proposition an \textbf{algebraic reduction} map.  We call a diagram as in the previous proposition as an algebraic reduction diagram.
\end{df}

\begin{remark}
    If $\psi:M\rightarrow V$ is the fibration in the algebraic reduction diagram in the proposition, by Zariski's main theorem, $\psi$ is a contraction morphism, i.e. $\psi$ has connected fibers (\cite[Proposition 3.4]{ueno75}).
\end{remark}

\section{Zariski Decomposition}

The abundance conjecture is originally stated for minimal models.  This is inconvenient, since in practice we make bimeromorphic modifications very often. To give the abundance conjecture a bimeromorphic description, we study the theory of Zariski decompositions in this section.

We first show that the existence of Zariski decompositions can be reduced to the category of nonsingular K\"ahler varieties (Corollary \ref{smooth-category}). Then we will prove the main theorem of this section, the pullback compatibility property, in Proposition \ref{pullback}.

The original proof of the pullback compatibility for projective varieties in \cite{9} makes use of hyperplane sections which does not work in the analytic set-up. We use a new Hodge-theoretic approach to prove it in the analytic setting.

\subsection{b-divisors}

In this section, we will discuss the notion of b-divisors, introduced by Shokurov in \cite[Section 1]{Shokurov-b-divisor}. We will restrict our discussion to projective bimeromorphic morphisms rather than proper ones, but the resulting notion of a b-divisor  is equivalent to the one in \cite[Section 1]{Shokurov-b-divisor} (See Remark \ref{proper-vs-projective}).

Fix an ambient category $\mathcal B$, whose objects are  projective bimeromorphic morphisms and morphisms are commutative squares consists of projective bimeromorphic morphisms.
 
Let $X$ be a compact normal analytic variety. We write $\mathcal B_X$ as the fiber category of all isomorphic classes of projective bimeromorphic morphisms $Y\to X$. We will write $\tilde{\mathcal B}_X$ be the full subcategory of $\mathcal B_X$ whose objects $Y\to X$ are morphisms from compact manifolds compact manifolds. We call any object $(\pi_Y:Y\to X)\in \mathcal B_X$  a \textbf{model}. We write $(Y_1\to X)\ge (Y_2\to X)$ if there exists a projective bimeromorphic morphism $Y_1\to Y_2$ over $X$.

\begin{lem}
    Let $X$ be a compact normal analytic variety. Then $\mathcal B_X$ is directed, i.e. every finite many of objects have a common upper bound in $\mathcal B_X$. If moreover $X$ is nonsingular, then $\tilde{\mathcal B}_X$ is directed.
\end{lem}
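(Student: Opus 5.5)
By induction it suffices to treat two objects, since a common upper bound of two objects can then be combined with a third, and so on. So let $\pi_1:Y_1\to X$ and $\pi_2:Y_2\to X$ be projective bimeromorphic morphisms.

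The plan is to take the graph of the induced bimeromorphic map $Y_1\dashrightarrow Y_2$. Concretely, consider the fiber product $Y_1\times_X Y_2$ and let $W$ be the unique irreducible component dominating $X$, with its reduced structure. Such a component exists because there are dense Zariski opens $U_i\subseteq Y_i$ over which $\pi_i$ is an isomorphism onto a common dense open $U\subseteq X$. Over $U$ the fiber product is isomorphic to $U$, so $W$ is the closure of $U$ and is irreducible.

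The key point is that the two projections $W\to Y_1$ and $W\to Y_2$ are projective bimeromorphic, so that $W\to X$ is an upper bound. This goes as follows:
\begin{enumerate}
    \item The base change $Y_1\times_X Y_2\to Y_1$ of the projective morphism $\pi_2$ is projective.
    \item The restriction of a projective morphism to the closed subspace $W$ is projective.
    \item The map $W\to Y_1$ is an isomorphism over $\pi_1^{-1}(U)$, hence bimeromorphic.
    \item The same argument applies to $W\to Y_2$.
\end{enumerate}
Since a composition of projective morphisms is projective (locally over the compact base $X$), $W\to X$ is projective bimeromorphic. Hence $(W\to X)\ge (Y_i\to X)$ for $i=1,2$. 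If one insists that objects of $\mathcal B_X$ be normal, replace $W$ by its normalization. This is finite, hence projective, and bimeromorphic, so the inequalities persist.

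For the nonsingular statement, suppose $X$ is nonsingular and $Y_1,Y_2$ are compact manifolds. Take the $W$ above and apply Hironaka's resolution of singularities to obtain $\tilde W\to W$ as a finite succession of blowups along nonsingular centers. Because $W$ is compact, finitely many blowups suffice, and each blowup is projective. Therefore $\tilde W\to W$ is projective bimeromorphic, $\tilde W$ is a compact manifold, and $\tilde W\to X$ is an object of $\tilde{\mathcal B}_X$ dominating both $Y_i$.

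The main obstacle is the projectivity of compositions in the analytic category. Projectivity of a morphism is only guaranteed locally over the base, and a composition of globally projective morphisms need not obviously be globally projective. I would handle this by using that $X$ is compact and working with relatively ample line bundles. If $L_1$ is $\pi_1$-ample and $L_2$ is ample for $W\to Y_1$, then $L_2\otimes p^*L_1^{\otimes m}$ is relatively ample over $X$ for $m\gg0$, where $p:W\to Y_1$ is the projection. The standard proof of this runs locally over relatively compact opens of $X$, and compactness of $X$ lets one choose a single $m$ that works everywhere. The same reasoning covers the resolution step.
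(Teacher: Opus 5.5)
Your proposal is correct and is essentially the paper's argument: take the main component of $Y_1\times_X Y_2$, followed by a projective resolution in the manifold case, as a common upper bound of both $Y_i$. You additionally spell out details the paper leaves implicit: projectivity under base change, under restriction to a closed subspace, and under composition over the compact base $X$, together with bimeromorphicity of the two projections.
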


\begin{proof}
    For two models $Y_i\to X$, we know $Y_1\times_X Y_2\to X$ followed by a projective resolution of the main component is a model of $X$ which dominates both $Y_1$ and $Y_2$.
\end{proof}

\begin{remark}
    If $X$ is a compact K\"ahler variety, by Remark \ref{kaehlerness-perserving}, any model $Y\in\mathcal B_X$ is K\"ahler as well. 
\end{remark}

\begin{df}
    Let $X$ be a compact normal analytic variety. A \textbf{b-divisor}  $\bd D$ on $X$ is an element in
    \[
        \bd D\in \lim_{Y\in \mathcal B_X} \operatorname{Div}(Y)
    \]
    where the transition maps are induced by proper pushforward.  We denote $\bd{Div}(X)$ as the space of all b-divisors on $X$.
\end{df}

A \textbf{filter basis}  of $\mathcal B$ is a subcategory $\mathcal B'$ of $\mathcal B$ such that for any $x\in \mathcal B$, there is a $y\in \mathcal B'$ with $y\ge x$. 
It follows from the formal property that if a b-divisor $\bd D$ is defined on a filter basis $\mathcal B'$ of $\mathcal B_X$, then $\bd D$ extends to a b-divisor on $\mathcal B_X$. 
Hence, for any projective bimeromorphic morphism $Y\to X$ between normal compact varieties, there is a natural identification $\bd{Div}(X) = \bd{Div}(Y)$.

\begin{remark}\label{proper-vs-projective}
     Though our definition is for projective bimeromorphic morphisms, it is equivalent to the definition in \cite[Section 1]{Shokurov-b-divisor}. In fact, by \cite[Corollary 2]{hironaka1975flattening},  projective bimeromorphic models form a filter basis of the category of proper bimeromorphic models.
\end{remark}

The following filter bases are useful for our purpose.

\begin{enumerate}
        \item If $X$ is a compact manifold, then $\tilde{\mathcal B}_X$ is a filter basis of $\mathcal B_X$;
        \item If $Y\to X$ is a projective bimeromorphic morphism between normal compact varieties, then $\mathcal B_{Y}$ is a filter basis of $\mathcal B_{X}$.
\end{enumerate}

If $D$ is a Cartier divisor on $X$, then we define the \textbf{closure} of $D$, denoted by $\overline D$, as the b-divisor given by $\overline{D}_Y = \pi^*_YD$. We say a b-divisor $\bd D$ \textbf{descends to} $Y$ if $\bd D = \overline{\bd D_Y}$. 

For a surjective morphism $f:Y\to X$ and a b-Cartier divisor $\bd D$ on $X$, the pullback b-Cartier divisor $f^*\bd D$ is well-defined. In fact, assume $\bd D$ descends to $\tilde X$. We  choose a commutative diagram 
\[
    \begin{tikzcd}
        \tilde Y\arrow{r}{\nu_Y}\arrow{d}{\tilde f} & Y\arrow{d}{f}\\
        \tilde X\arrow{r}{\nu_X} & X
    \end{tikzcd}
\]
such that $\nu_X,\nu_Y$ are projective bimeromorphic. Then we define $f^*\bd D = \overline{f^*\bd D_{\tilde X}}\in \bd{Div}(\tilde Y) = \bd{Div}(Y)$.

\subsection{Zariski Decomposition}

Let $X$ be a compact normal K\"ahler variety and $D$ be an $\mathbb R$-Cartier divisor on $X$. The following version of Zariski decomposition comes from \cite{9} and the formulation dues to \cite{5}.

\begin{df}
    A \textbf{Zariski decomposition} (in the sense of Fujita) of $D$ is a decomposition $\overline{D} = \mathbf P + \mathbf N$ where $\mathbf P,\mathbf N$ are b-divisors with real coefficients on $X$ such that
    \begin{enumerate}
        \item $\mathbf P\leq \overline{D}$ and $\mathbf P$ is b-nef;
        \item  If $\bd H\leq \overline{D}$ and $\textbf{H}$ is b-nef, then $\mathbf H\leq \mathbf P$.
    \end{enumerate}
  The b-divisor $\bd P = \bd P(D)$ is called the \textbf{semi-positive part}  and $\bd N = \bd N(D)$ is called the \textbf{negative part}.
\end{df}

From the definition, if a Zariski decomposition exists, then it is unique. If $D$ is a nef $\mathbb R$-divisor, then the Zariski decomposition exists and $\bd P(D) = \overline{D}$. 
Instead of studying existence in general, we will show the compatibility of the decomposition under pullbacks.

We first show in Corollary \ref{smooth-category} that we can reduce the discussion of Zariski decomposition to the category of (nonsingular) K\"ahler manifolds and projective bimeromorphic morphisms.

\begin{lem}
    Let $X$ be a compact normal K\"ahler variety and $\mathcal B'\subseteq \mathcal B_X$ be a filter basis. Then
    \begin{enumerate}
        \item $\bd D$ is b-nef in $\mathcal B_X$ if and only if $\bd D$ is b-nef in $\mathcal B'$;
        \item $\bd D_1\le \bd D_2$ in  $\mathcal B_X$ if and only if $\bd D_1\le \bd D_2$ in $\mathcal B'$;
        \item $D$ has a Zariski decomposition in $\mathcal B_X$ if and only if $D$ does in $\mathcal B'$. If the decomposition exists, we have $\bd P(D)_{\mathcal B_X} = \bd P(D)_{\mathcal B'}$.
    \end{enumerate}
\end{lem}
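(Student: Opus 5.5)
The plan is to reduce everything to the cofinality of $\mathcal B'$ in $\mathcal B_X$, using only two basic facts. First, pushforward preserves nefness. Second, a b-divisor is determined by its traces on any cofinal family of models. Throughout I take ``b-nef'' in the usual sense: $\bd D$ descends to some model $Y$ and $\bd D_Y$ is nef there.

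\textbf{Part (2).} One direction is trivial, since $\mathcal B'\subseteq\mathcal B_X$. For the converse, suppose $(\bd D_1)_{Y'}\le(\bd D_2)_{Y'}$ for all $Y'\in\mathcal B'$. Given an arbitrary $Y\in\mathcal B_X$, pick $Y'\in\mathcal B'$ with $Y'\ge Y$ via $g:Y'\to Y$. Then $(\bd D_i)_Y=g_*(\bd D_i)_{Y'}$, because the transition maps are proper pushforwards. Pushforward of cycles preserves effectivity, so the inequality transfers to $Y$.

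\textbf{Part (1).} Suppose $\bd D$ descends to $Y\in\mathcal B_X$ with $\bd D_Y$ nef. Choose $Y'\in\mathcal B'$ with $Y'\ge Y$ via $g$. Then $\bd D$ also descends to $Y'$, with $\bd D_{Y'}=g^*\bd D_Y$, and this is nef by Theorem \ref{pullbacknef}. Conversely, if $\bd D$ descends to some $Y'\in\mathcal B'$ with nef trace, then $Y'$ is already an object of $\mathcal B_X$, so there is nothing to prove. One subtlety: the condition ``$\bd D=\overline{\bd D_{Y'}}$'' must be read as an equality of b-divisors on $\mathcal B_X$ even when it is only checked on $\mathcal B'$. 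This is harmless, since two b-divisors agreeing on a filter basis agree everywhere, by the argument of part (2) applied to both inequalities.

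\textbf{Part (3).} The two candidate decompositions of $\overline D$ are compared purely through (1) and (2). The restriction of b-divisors from $\mathcal B_X$ to $\mathcal B'$ is a bijection, compatible with $\le$ by (2) and with b-nefness by (1). Condition (1) of the definition of a Zariski decomposition is stated entirely in terms of these notions. Condition (2) quantifies over b-nef $\bd H\le\overline D$, and by the same bijection this class of $\bd H$ is the same on both sides. Hence $\bd P$ satisfies the defining properties in $\mathcal B_X$ if and only if it does in $\mathcal B'$. Uniqueness of the decomposition then gives $\bd P(D)_{\mathcal B_X}=\bd P(D)_{\mathcal B'}$.

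\textbf{Main obstacle.} The only real point is the direction in (1) that passes from a model in $\mathcal B_X$ to a higher model in $\mathcal B'$. This needs Theorem \ref{pullbacknef} for the possibly singular normal K\"ahler varieties occurring as models, and it needs $\bd D_Y$ to be $\mathbb R$-Cartier so that the pullback makes sense. I would build the $\mathbb R$-Cartier condition into the definition of b-nef: ``descends'' already presumes a Cartier trace.
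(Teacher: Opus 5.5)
Your proposal is correct and follows the paper's proof: (1) passes to a dominating model in $\mathcal B'$ and uses Theorem \ref{pullbacknef} (pullbacks of nef classes are nef), (2) uses that pushforward preserves effectivity, and (3) is a formal consequence of (1) and (2). Your added care about identifying b-divisors on $\mathcal B'$ with those on $\mathcal B_X$, and about the $\mathbb R$-Cartier requirement built into ``descends'', makes explicit what the paper leaves implicit.
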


\begin{proof}
    (1) This follows from the fact that $D$ is nef if and only if $\pi^*D$ is nef.

    (2) It suffices to show $\bd E\ge 0$ in $\mathcal B_X$ implies $\bd E\ge 0$ in $\mathcal B'$. It follows from the fact that pushforward of an effective divisor is effective.

    (3) It suffices to show $\overline{D} = \bd P+\bd N$ is a Zariski decomposition in $\mathcal B_X$ if and only if $\overline{D} = \bd P+\bd N$ is a Zariski decomposition in $\mathcal B'$.

    In fact,  $\overline{D} = \bd P+\bd N$ is a Zariski decomposition if and only if $\bd P$ is b-nef in $\mathcal B_X$ and $\bd H\le \overline{D}$ implies $\bd H\le \bd P$ in $\mathcal B_X$. By (1) and (2), this is equivalent to $\bd P$ is b-nef in $\mathcal B'$ and $\bd H\le \overline{D}$ implies $\bd H\le \bd P$ on $\mathcal B'$, i.e. $\overline{D} = \bd P+\bd N$ is a Zariski decomposition. 
\end{proof}

\begin{cor}\label{smooth-category}
    Let $X, Y$ be compact normal K\"ahler varieties and $\pi: X'\to X$ be a projective bimeromorphic morphism. Let $D$ be an $\mathbb R$-Cartier divisor.  Then
    \begin{enumerate}
        \item  $D$ has a Zariski decomposition if and only if $\pi^*D$ does. If the decomposition exists, we have $\pi^*\bd P(D) = \bd P(\pi^*D)$.
        \item  Assume $X'$ is nonsingular. Then $D$ has a Zariski decomposition in $\mathcal B_X$ if and only if $\pi^*D$ has a Zariski decomposition in $\tilde{\mathcal B}_{\tilde X}$. If the decomposition exists, we have $\pi^*\bd P(D) = \bd P(\pi^*D)$.
    \end{enumerate}
\end{cor}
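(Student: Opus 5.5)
The plan is to deduce both parts directly from the preceding lemma, using the filter bases listed just before the definition of Zariski decomposition, together with the identification $\bd{Div}(X) = \bd{Div}(X')$ for a projective bimeromorphic $\pi:X'\to X$.

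For (1), the key observation is that $\mathcal B_{X'}$ is a filter basis of $\mathcal B_X$ (item (2) of the list of filter bases). Under the identification $\bd{Div}(X)=\bd{Div}(X')$, the closure $\overline{D}$ on $X$ corresponds to $\overline{\pi^*D}$ on $X'$. Indeed, for any model $\pi_Y:Y\to X'$ we have $(\overline D)_Y=(\pi\circ\pi_Y)^*D=\pi_Y^*\pi^*D=(\overline{\pi^*D})_Y$, using functoriality of pullback of $\mathbb R$-Cartier divisors.

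Next I check that b-nefness and the order $\le$ for b-divisors, viewed in $\mathcal B_{X'}$, are the same notions whether $X'$ or $X$ is regarded as the base. This holds because they are tested model by model, and the models in $\mathcal B_{X'}$ are models over $X$ via composition with $\pi$ (a composition of projective bimeromorphic morphisms is projective bimeromorphic). Hence a Zariski decomposition of $\overline{\pi^*D}$ in $\mathcal B_{X'}$ is literally a Zariski decomposition of $\overline D$ computed in the filter basis $\mathcal B_{X'}\subseteq\mathcal B_X$. By part (3) of the lemma, this is equivalent to having one in $\mathcal B_X$, with the same semi-positive part. That gives $\pi^*\bd P(D)=\bd P(\pi^*D)$, where $\pi^*$ on b-divisors is just the identification.

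For (2), I take $X'$ nonsingular, so $\tilde{\mathcal B}_{X'}$ is a filter basis of $\mathcal B_{X'}$ (item (1) of the list). Applying part (3) of the lemma to $X'$ with this filter basis shows that $\pi^*D$ has a Zariski decomposition in $\mathcal B_{X'}$ if and only if it has one in $\tilde{\mathcal B}_{X'}$, with the same positive part. Composing with (1) gives the claim.

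The only mildly delicate point is the transitivity of filter bases: a filter basis of a filter basis is a filter basis of $\mathcal B_X$. This is immediate from the definition, since if $y\ge x$ and $z\ge y$ then $z\ge x$. I also need that $\ge$ in $\mathcal B_{X'}$ implies $\ge$ in $\mathcal B_X$, which holds because morphisms over $X'$ are morphisms over $X$. I expect no real obstacle; the main care is bookkeeping the identification of b-divisors and the compatibility $\overline{D}\leftrightarrow\overline{\pi^*D}$.
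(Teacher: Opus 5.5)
Your proposal is correct and is essentially the paper's proof, which simply applies the preceding lemma to the filter bases $\mathcal B_{X'}\subseteq\mathcal B_X$ and $\tilde{\mathcal B}_{X'}\subseteq\mathcal B_X$. The only difference is cosmetic: for (2) you go through $\tilde{\mathcal B}_{X'}\subseteq\mathcal B_{X'}$ and then compose with (1), instead of applying the lemma to $\tilde{\mathcal B}_{X'}\subseteq\mathcal B_X$ directly. Your explicit checks of $\overline D=\overline{\pi^*D}$ under the identification and of transitivity of filter bases are bookkeeping the paper leaves implicit.
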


\begin{proof}
    For an $X'$ over $X$ or a projective resolution $\tilde X\to X$, applying the lemma to $\mathcal B_{X'}\subseteq \mathcal B_X$ and $\tilde{\mathcal B}_{\tilde X}\subseteq \mathcal B_X$.
\end{proof}

Next, we will study the pullback compatibility.
Let us first fix some terminology. Let $f:X\rightarrow Y$ be a surjective proper morphism between normal $\mathbb Q$-factorial varieties. Let $D$ be a divisor on $X$  and $E$ be a prime $\mathbb Q$-Cartier divisor on $Y$. We say $D$ \textbf{does not support the generic fiber} of $E$ if there is a prime divisor $F$ on $X$ such that $f(F) =  E$ but $F$ is not in $D$. We say a divisor $D$ is \textbf{unsaturated} over $Y$, if $D$ is vertical and $D$ does not support the generic fiber over any 
prime divisor $E$ of $Y$.

The main result of this section is:

\begin{prop}\label{pullback}
    Let $f:X\rightarrow Y$ be a proper contraction from a compact K\"ahler manifold $X$ to a projective manifold $Y$. Let $D$ be a $\mathbb R$-divisor on $Y$ and $E$ be an effective $\mathbb R$-divisor on $X$ that is unsaturated over $Y$.

    Then $f^*D + E$ has a Zariski decomposition if and only if $D$ does. Moreover, when the Zariski decomposition exists, we have $\bd P(f^*D + E) = f^*\bd P(D)$.
\end{prop}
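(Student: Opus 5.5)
The plan is to reduce everything to a single ``key lemma'', which plays the role of Fujita's main lemma, and then deduce the b-divisorial statement formally, using Corollary \ref{smooth-category} to pass between models. The key lemma reads: \emph{let $f:X\to Y$ be as in Proposition \ref{pullback}, let $H$ be a nef $\mathbb R$-divisor on $X$, let $D$ be an $\mathbb R$-divisor on $Y$ and $E\ge 0$ unsaturated over $Y$. If $H\le f^*D+E$, then there is a nef $\mathbb R$-divisor $H_Y$ on $Y$ with $H_Y\le D$ and $H\le f^*H_Y$.}

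Granting the key lemma, the proposition follows as below.

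\textbf{Reduction to models.} By Corollary \ref{smooth-category} and the flattening theorem, it suffices to test b-nef divisors on a cofinal system of models $X'\to X$ that fit into squares with $Y'\to Y$ projective bimeromorphic and $f':X'\to Y'$ a contraction. One must check that on such a model $\overline{f^*D+E}_{X'}=f'^*(\overline{D}_{Y'})+E'$, where $E'\ge 0$ is again unsaturated over $Y'$. The point is that $E'$ consists of the strict transform of $E$ together with $\nu$-exceptional divisors. A $\nu$-exceptional divisor lying over a prime divisor $\Gamma\subset Y'$ can never cover the whole generic fiber over $\Gamma$: if it did, it could be absorbed into $f'^*$ of a $\pi$-exceptional divisor. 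I would choose the decomposition of $\overline{D}_{Y'}$ accordingly, so that the $\pi$-exceptional parts are pushed into $f'^*$.

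\textbf{The two directions.} If $\bd P(D)$ exists, then $f^*\bd P(D)$ is b-nef by Theorem \ref{pullbacknef}. Moreover $f^*\bd P(D)\le f^*\overline D\le\overline{f^*D+E}$. Maximality follows from the key lemma: any b-nef $\bd H\le\overline{f^*D+E}$ satisfies $\bd H_{X'}\le f'^*H_{Y'}$ with $H_{Y'}$ nef and $H_{Y'}\le\overline D_{Y'}$, hence $H_{Y'}\le\bd P(D)_{Y'}$. Conversely, if $\bd P(f^*D+E)$ exists, apply the key lemma on each model and push forward; the resulting b-divisor $\bd P_Y$ on $Y$ is b-nef, and one checks it is maximal below $\overline D$ by pulling back.

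\textbf{Proof of the key lemma.} Set $B=H-f^*D$, so $B\le E$ and $B$ is $f$-nef. I will show $B\le f^*C$ for some $C\le 0$ supported on the images of components of $E$, and then take $H_Y=D+C$. For nefness of $H_Y$, note that $f^*H_Y$ differs from $H$ only by $f$-vertical pieces. Its nefness then follows from $H$ nef via Theorem \ref{neftest}, testing on subvarieties of $Y$ through their preimages. The argument for $B\le f^*C$ has two parts.
\begin{enumerate}
\item \emph{Horizontal part.} On a general fiber $F$ (a compact K\"ahler manifold), $B|_F$ is nef and $B|_F\le E|_F=0$. Pairing with $\omega^{\dim F-1}$ forces $B|_F=0$, so $B$ is vertical.
\item \emph{Vertical part: the main obstacle.} Over each prime divisor $\Gamma\subset Y$ write $B=\sum b_iF_i$ near the generic point of $\Gamma$. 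Unsaturatedness gives some $F_{i_0}$ over $\Gamma$ with $b_{i_0}\le 0$. I want all $b_i\le c_\Gamma:=\min_j b_j/\mathrm{mult}_{F_j}f^*\Gamma$, which is a fiberwise negativity lemma. In the projective case this is done by cutting with hyperplanes down to a surface over a curve and applying Zariski's lemma. Here I would instead use a Hodge-index argument. Take an ample $A$ on $Y$ and consider the bilinear form
\[Q(\alpha,\beta)=\alpha\cdot\beta\cdot f^*A^{m-1}\cdot\omega^{n-m-1}\]
on divisors supported in $f^{-1}(\Gamma)$, where $m=\dim Y$ and $n=\dim X$. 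The Hodge--Riemann bilinear relations for the K\"ahler classes $\omega$ and $f^*A+\epsilon\omega$ should show that $Q$ is negative semidefinite with kernel spanned by $f^*\Gamma$. Then $B-c_\Gamma f^*\Gamma$ is $f$-nef and $\le 0$ on some component, and pairing it with its positive part gives the contradiction. Working out the semidefiniteness and kernel statement via mixed Hodge--Riemann relations, as a limit as $\epsilon\to0$, is where I expect the real work.
\end{enumerate}
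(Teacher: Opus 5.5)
\textbf{There is a genuine gap: your key lemma is false as stated, and it fails exactly at the claim that $H_Y=D+C$ is nef.} Proposition \ref{pullback} allows $f$ to be any contraction, for example a birational morphism of projective manifolds. Here is a counterexample.
\begin{enumerate}
\item Let $Y\dashrightarrow Y^+$ be an Atiyah flop of projective threefolds with flopping curve $\gamma$, and let $f:X\to Y$ be the blow-up of $\gamma$, with exceptional divisor $e$.
\item Let $D$ be the strict transform of an ample divisor on $Y^+$, and put $E=0$.
\item Then $f^*D=H+ae$ with $a>0$, where $H$ is the (nef) pullback of that ample divisor under $X\to Y^+$. Moreover $D\cdot\gamma<0$.
\end{enumerate}
Suppose $H_Y\le D$ with $H\le f^*H_Y$. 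Pushing forward gives $D=f_*H\le H_Y\le D$, so $H_Y=D$, which is not nef. Your construction produces exactly this: $C=0$ and $H_Y=D$.

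The sketched nefness test breaks down because $f^*H_Y-H=f^*C-B$ is effective but concentrated on $f$-exceptional divisors. Over a subvariety $\Sigma\subseteq Y$ where $f$ is not equidimensional, every subvariety of $f^{-1}(\Sigma)$ dominating $\Sigma$ may lie in its support, so Theorem \ref{neftest} gives nothing. Relatedly, step (2) only handles components of $B$ dominating divisors of $Y$. Components whose image has codimension $\ge 2$ are invisible to your form $Q$, since a general curve representing $A^{m-1}$ misses their image. Bounding them needs a separate negativity argument; in the paper this is Lemma \ref{1.5}(1) via Lemma \ref{D}.

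Your formal deduction applies the key lemma on models $f':X'\to Y'$ with $X'$ smooth, and these are not flat in general. So both directions inherit the gap: a nef divisor below $\overline D$ dominating a given nef divisor upstairs in general exists only on a higher model of the base. The paper handles exactly this point differently.
\begin{enumerate}
\item It first removes $E$ by showing it is numerically fixed by $f^*D+E$ (Lemma \ref{C}).
\item It then passes to a flat model $g:V\to T$ of $f$. There Proposition \ref{flatpullback} shows that an effective $F$ with $f^*L-F$ nef satisfies $\nu^*F=g^*G$ exactly.
\item Hence $g^*(\pi^*L-G)=\nu^*(f^*L-F)$ is nef, and Theorem \ref{pullbacknef} gives nefness on $T$.
\item Numerical fixedness is transported between $X$ and the base by Lemma \ref{A} and Proposition \ref{B}.
\end{enumerate}
You can repair your approach the same way. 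Let $H_Y$ live on a flattening $T\to Y$, and take $H$ pulled back from $X$ to the flat model, where the vertical correction is an honest pullback. Nefness then comes from Theorem \ref{pullbacknef} rather than a subvariety test. With that change, your Hodge--Riemann treatment of the fibres over divisors is a reasonable substitute for the cutting-down argument in Lemma \ref{1.5}.
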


In his paper \cite{9}, Fujita gives the definition of the decomposition and proves this proposition for projective varieties. We will extend this result to the K\"ahler case.

Let $f: X\to Y$ be a surjective proper morphism between $\mathbb Q$-factorial compact normal K\"ahler varieties. In the rest of this section, we will focus on the following two types of morphisms that are important for our main result (Theorem \ref{thmA}).

(I) projective morphism: $f$ is a projective bimeromorphic morphism from a compact K\"ahler manifold to a K\"ahler variety;

(II) projective base: $f$ is a proper surjective morphism from a compact K\"ahler manifold $X$ to a compact projective manifold $Y$.

We first show that unsaturated divisors won't contribute to b-nefness:

\begin{lem}\label{1.5}
     Let $f:X\to Y$ be a proper surjective morphism of type (I) or type (II). Let $D\geq 0$ be a vertical effective $\mathbb R$-divisor on $X$ and let $E_1,\dots,E_r$ be prime divisors on $X$. Let $Z$ be an irreducible component of $f(\operatorname{supp}(D))$. 
     
     Assume that 
    \begin{enumerate}
        \item $\operatorname{codim}_S(Z)\geq  2$; or 
        \item $\operatorname{codim}_S(Z) = 1$, each fiber of $f$ over a general point on $Z$ is connected, and $D$ does not support the generic fiber of $Z$.
    \end{enumerate}

     Then for any general point $y\in Z$, there is a subvariety $W\subseteq X_y$ satisfying: 
     \begin{enumerate}
         \item $[D]|_W$ is not pseudoeffective;
         \item $W$ is not contained in more than one $E_i$.
     \end{enumerate}
\end{lem}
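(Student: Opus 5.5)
Throughout, $S$ in the statement denotes the base $Y$. The plan is to produce $W$ as a curve-like or surface-like subvariety of a general fiber $X_y$, $y\in Z$, meeting the support of $D$ so that $D|_W$ is nonzero effective but numerically "negative" on $W$. Since $D$ is vertical and $D\geq 0$, $D|_{X_y}$ is pulled back from something trivial along $Y$ away from $Z$. The natural approach is a Hodge-index/negativity lemma on the fiber: on a suitable subvariety of $X_y$, an effective nonzero divisor supported on fiber components which does not contain the whole fiber cannot be pseudoeffective when restricted appropriately. Concretely, I would reduce to a two-dimensional situation. I would take a general complete intersection of the base through $y$: in case (2), a general curve $C\subseteq Y$ meeting $Z$ transversally at $y$; in case (1), a general surface $\Sigma\subseteq Y$ through $y$ with $\Sigma\cap Z$ finite. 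Such sections exist because $Y$ is projective in type (II); in type (I), $f$ is projective, so hyperplane sections on $X$ relative to $Y$ can be used locally instead. I would then work on $X_C=f^{-1}(C)$ (respectively $X_\Sigma$), which is Kähler as a subvariety.

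In case (1), $D|_{X_\Sigma}$ is supported over finitely many points, so its components are exceptional over $\Sigma$. Over a neighbourhood of $y$, $f$ restricted there contracts $\operatorname{supp}(D)\cap X_y$ to a point. The negativity lemma for a contraction to a point (via the Hodge index theorem on the Kähler class restricted to a surface cutting through) yields a curve or surface $W\subseteq D\cap X_y$ with $D|_W$ not pseudoeffective. In case (2), the fiber over a general $y\in Z$ is connected and is not entirely contained in $\operatorname{supp}(D)$. Then $D|_{X_C}$ is effective, supported on the fiber $X_y$, and misses some component. The Zariski lemma (Hodge index on the fiber of $X_C\to C$, using that $f^*y$ has connected support and that $D\neq \lambda f^*y$) gives a component $G$ of $\operatorname{supp}(D)\cap X_y$ meeting a component not in $D$. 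I would take $W\subseteq G$ through such an intersection point, so that $D|_W$ has negative degree against the Kähler class.

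To arrange condition (2) of the conclusion, I would use that there are finitely many $E_i$, and choose $W$ generically. For general $y$, the intersections $E_i\cap E_j\cap X_y$ have smaller dimension than the components of $X_y$. I would therefore select $W$ as a general member of a family of subvarieties of the relevant component $G$ (for example, complete intersections of general Kähler-class representatives are unavailable, so instead use general fibers of $G\to$ some projection, or general points moved within $G$). Then $W$ lies in at most the $E_i$ containing all of $G$, and there is at most one of those, namely $G$ itself, if it is among them. Non-pseudoeffectivity is tested by intersecting with $\omega^{\dim W-1}$, and since the degree computation is constant in the family, it persists for general $W$.

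The main obstacle is replacing hyperplane sections inside the non-projective $X$, since only $Y$ (or the morphism) is projective. My first attempt is to cut only on the base, as above, and then, instead of taking curves in $X_y$, to prove non-pseudoeffectivity of $D|_W$ for $W$ an entire component of $\operatorname{supp}(D)\cap X_y$. This would use the Hodge–Riemann bilinear relations for the Kähler class $\omega$ on $X_C$ or $X_\Sigma$: the form $(\alpha,\beta)\mapsto \alpha\cdot\beta\cdot\omega^{n-2}$ is negative semidefinite on classes of fiber-supported divisors, with kernel spanned by the full fiber. A pseudoeffective restriction would give $D\cdot D\cdot\omega^{n-2}\geq 0$ componentwise, contradicting negativity. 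The remaining step is then checking the genericity required for the $E_i$.
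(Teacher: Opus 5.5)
Your last paragraph lands on the paper's mechanism for type (II). It uses the form $Q(\alpha,\beta)=\int\alpha\wedge\beta\wedge\omega^{k-2}$ on a cut-down K\"ahler manifold, Zariski's lemma when the cut base is a curve (your case (2) cut is the paper's), and $W$ a whole component of $D$ over $y$. The reduction feeding into it fails in case (1), however.

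Cutting the base to a surface $\Sigma$ through $y$ only works when $\operatorname{codim}Z=2$. For general $y\in Z$, the components of $D$ over $y$ have dimension $n-1-\dim Z$. Once $\operatorname{codim}Z\ge 3$, this is $\ge n-\dim Y+2$, the dimension of the part of $f^{-1}(\Sigma)$ dominating $\Sigma$. So these components are no longer divisors on $f^{-1}(\Sigma)$, and Hodge--Riemann cannot be run there. For example, take $\mathrm{Bl}_p\mathbb P^3\to\mathbb P^3$ with $D$ the exceptional divisor: a plane through $p$ has preimage $\tilde\Sigma\cup D$, which is singular and has $D$ as a component. Separately, ``K\"ahler as a subvariety'' is not enough; you need the cut to be smooth along $X_y$.

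The missing idea is that no reduction to a surface is needed: negativity holds over any projective base $S$ of dimension $\ge 2$. Take a hyperplane $H\subseteq S$ avoiding the image point. Then $Q(f^*H,f^*H)>0$ and $Q(f^*H,[D_Z])=0$, so the Hodge--Riemann signature $(1,h^{1,1}-1)$ forces $Q([D_Z],[D_Z])<0$ (the paper's Lemma~\ref{D}). One therefore cuts $Y$ by exactly $\dim Z$ general hyperplanes. Then $S$ has dimension $\operatorname{codim}Z$ and meets $Z$ in general points, over which $f^{-1}(S)$ is smooth by Bertini for the base-point-free system $f^*|H|$. Lemma~\ref{D} applies if $\dim S\ge 2$, and Zariski's lemma if $\dim S=1$. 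Your ``Hodge index theorem on the K\"ahler class restricted to a surface cutting through'' cannot replace this, since a non-projective $X$ has no such surfaces.

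The choice of $W$ and the condition on the $E_i$ also need repair. Negativity does not pass to smaller subvarieties. The inequality $\int_G[D]|_G\wedge\omega^{\dim G-1}<0$ says nothing about the sign on members of a covering family of $G$; constancy of the degree in the family is not negativity. For instance, $\mathcal O(-2,1)$ on $\mathbb P^1\times\mathbb P^1$ is negative against $\omega$ of class $\mathcal O(1,1)$, yet has degree $1$ on $\{p\}\times\mathbb P^1$. Also, the component supplied by Zariski's lemma is one with $Q(G,D)<0$, which need not be one meeting a component outside $D$. So take $W=G$.

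With $W=G$, your ``namely $G$ itself'' does not parse, since $G$ is a divisor on the cut-down space, not one of the $E_i$ on $X$. The paper's device is to first add the components of $D$ to the list of $E_i$. It then chooses the general cut so that the restrictions $E_i|_{f^{-1}(S)}$ pairwise share no component. $W$ is a prime divisor of $f^{-1}(S)$ (or of a resolution that is an isomorphism over the point), so it lies in at most one $E_i$.

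Finally, in type (I) the base $Y$ is not projective. ``Hyperplane sections used locally'' then produce no compact K\"ahler manifold on which to run Hodge--Riemann, so a genuinely local negativity statement is needed. The paper obtains it by passing via GAGA to excellent schemes and cutting with a relative Bertini theorem down to a birational morphism of surfaces with $Z$ a point. It then applies the Hodge index theorem there to find a curve $C$ over $Z$ with $C\cdot D<0$.
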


This Lemma is the analytic version of \cite[Lemma 1.5]{9}, which is the key technique used in the proof of pullback compatibility of Zariski decomposition. In his paper \cite{9}, Fujita makes use of a hyperplane section argument. Since there is no ample divisor on a general analytic variety, we replace this with a Hodge theoretic argument:

\begin{lem}\label{D}
    Let $\pi:M\rightarrow S$ be a dominant morphism from a compact K\"ahler manifold $M$ to a projective variety $S$ where $\operatorname{dim}(M) = n$  and $\operatorname{dim}(S)\geq 2$. If $E$ is an effective divisor on $M$ which contracts to a point, then 
    \[
        \int_M[E]^2\wedge\omega^{n-2}<0
    \]
    for any K\"ahler form $\omega$ on $M$. 
\end{lem}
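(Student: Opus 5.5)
We reduce the statement to the Hodge index theorem on a surface-like slice of $M$, manufactured by pulling back ample classes from the projective base $S$. Write $E=\sum a_iE_i$ with $a_i>0$ and $\pi(E)=\{s\}$ a point. Replacing $S$ by a resolution of the closure of the image (and $M$ by nothing, since $\pi$ already lands in $S$; if needed we compose with a resolution and observe the integral only involves $E$ and $\omega$ on $M$), we may assume $S$ is projective. Fix a very ample divisor $H$ on $S$ and set $h=\pi^*c_1(H)$. This is a semipositive $(1,1)$ class on $M$ with $h|_E=0$, since $E$ maps to a point. Note that $\dim S\ge 2$ forces $h^2\neq 0$ as a class: $h^2\wedge\omega^{n-2}>0$ because $\pi$ is dominant onto a variety of dimension at least $2$.

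The key tool is the Hodge index theorem (Hodge--Riemann bilinear relations) for the bilinear form
\[
Q(\alpha,\beta)=\int_M\alpha\wedge\beta\wedge\omega^{n-2}
\]
on $H^{1,1}(M,\mathbb R)$. This form has signature $(1,h^{1,1}-1)$, and it is negative definite on the $Q$-orthogonal complement of any class $\gamma$ with $Q(\gamma,\gamma)>0$. This holds because $[\omega]$ is a Kähler class: the Lefschetz decomposition gives $Q$ negative definite on $\omega$-primitive classes, and the standard signature argument then extends this to the complement of any positive class.

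We apply this with $\gamma=h$, which satisfies $Q(h,h)>0$. Then
\[
Q([E],h)=\int_M[E]\wedge h\wedge\omega^{n-2}=\int_E h|_E\wedge\omega^{n-2}=0,
\]
because $h|_E=\pi|_E^*c_1(H)$ and $\pi|_E$ is constant, so $h|_E$ is trivial on each component. Hence $[E]$ lies in $h^{\perp_Q}$, giving $Q([E],[E])\le 0$, with equality only if $[E]=0$ in $H^{1,1}(M,\mathbb R)$. But a nonzero effective divisor has a nonzero class on a compact Kähler manifold, since $\int_M[E]\wedge\omega^{n-1}=\sum a_i\operatorname{vol}_\omega(E_i)>0$. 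Therefore $\int_M[E]^2\wedge\omega^{n-2}<0$.

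The main obstacle is justifying the Hodge index step in the precise form required: that $Q$ is negative definite on the orthogonal complement of any class of positive $Q$-square, rather than only on the orthogonal complement of $[\omega]$. I would derive this directly from the signature $(1,h^{1,1}-1)$, which follows from the Hodge--Riemann relations together with the Lefschetz decomposition $H^{1,1}=\mathbb R[\omega]\oplus P^{1,1}$, in which $Q$ is positive on $[\omega]$ and negative definite on $P^{1,1}$. A form of signature $(1,m)$ is negative definite on the complement of any positive vector, which gives the needed statement. A smaller point is the vanishing $h|_E=0$ as a cohomology class on each component $E_i$. Since $Q([E],h)$ is computed by integrating smooth forms against the current $[E]$, it suffices that the pullback form $\pi^*\eta_H$ restricts to zero on $E_i^{sm}$, and this is immediate because $\pi$ is constant on $E_i$.
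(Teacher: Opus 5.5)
Your proof is correct and is essentially the paper's argument. The paper also uses $Q(\alpha,\beta)=\int_M\alpha\wedge\beta\wedge\omega^{n-2}$, notes $Q(\pi^*[H],\pi^*[H])>0$, and gets $Q(\pi^*[H],[E])=0$, though it does so by choosing $H$ to avoid $\pi(E)$ rather than by your observation that $\pi^*\eta_H$ restricts to zero on $E$; it then concludes from Hodge--Riemann negativity on $(\pi^*[H])^{\perp}$. Your checks that $[E]\neq 0$ and $Q(h,h)>0$, and your signature argument, fill in details the paper leaves implicit; the opening remark about replacing $S$ by a resolution is unnecessary, since $S$ is projective by hypothesis.
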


\begin{proof}
    Consider the symmetric $\mathbb R$-bilinear form on $H^{1,1}(M,\mathbb R)$:
    \[
       Q([\alpha],[\beta]) = \int_M\alpha\wedge{\beta}\wedge\omega^{n-2}.
    \]

    Let $H\subseteq S$ be a general hyperplane section which avoids the point $f(E)$. Then we have $Q(\pi^*[H],\pi^*[H]) > 0$. Let $V$ be the orthogonal complement of the linear subspace spanned by $\pi^*[H]$ in $H^{1,1}(X,\mathbb R)$. Then by the Hodge-Riemann relations, we have $Q$ is negative definite on $V$. Note that
    \[
        Q(\pi^*[H],[E]) = \int_M \pi^*[H]\wedge[E]\wedge\omega^{n-2} = 0
    \]
    We conclude that $[E]\in V$ hence $Q([E],[E]) < 0$. The statement follows.
\end{proof}

\begin{proof}[Proof of Lemma \ref{1.5}]
    By adding components of $D$ into the list $E_i$, we can assume $D\subseteq\cup_i E_i$. Assume $f$ is of type (I), i.e. $f$ is a projective bimeromorphic morphism. Since $f$ is bimeromorphic,  only case (1) is possible.  Then we can reduce to the projective case by the GAGA principal.

    For terminology and the general theory of the argument below, we refer to \cite[Section 2, 23 and 24]{lyu2026relativeminimalmodelprogram} for a comprehensive discussion.

    In fact, the statement is local on $Y$. Fix an affinoid $U = (\mathcal U,U)\subseteq (Y,Y)$. By the GAGA principle \cite[Appendix B, C]{GAGA}, we reduce to the case when $f: X\to Y$ is a projective birational morphism between excellent affine schemes. Next, by projectivity, shrinking $U$ if necessary, we decompose $f$ into $X\to \mathbb P^N_A\to Y = \operatorname{Spec}(A)$ where $A$ is an excellent ring of equal characteristic 0.

    By Bertini’s Theorem \cite[Theorem 10.1]{lyu2026relativeminimalmodelprogram}, after possibly shrinking $U$ again, we can find a general A-hyperplane section $H \subseteq Y$ such that $f^{-1}H$ is regular, $f^{-1}H$ meets each $E_i$ transversely, and $E_i|_{f^{-1}(H)}$ and $E_j|_{f^{-1}(H)}$ share no common component for $i\ne j$. Since $f$ is birational and $H$ is general, we have $f^{-1}H\to H$ is also birational. By repeating cutting $Z$ by the hyperplanes and shrinking $U$ if necessary, we reduce to the case when $f:X\to Y$ is a projective birational morphism between excellent surfaces and $Z$ is an isolated point.

    Then the statement is a consequence of the Hodge index Theorem. In fact, write $D = D_Z+D_R$ where $D_Z$ is the union of all those curves supported over $Z$. Then by the Hodge index theorem  for excellent surfaces \cite[Theorem 10.1]{MR3057950}, we have $D_Z^2<0$. As a result, there exists a curve $C$ supported over $Z$ such that $C.D = C.D_Z < 0$.  Since $C$ intersects $E_i$ properly if $E_i\ne C$, we know $C$ satisfies the property in the statement.

    Next, we will assume $f$ is of type (II). We will write $n = \dim(X)$.  We first assume that $Z$ is a point.
    
    If $\operatorname{dim}(Y)\ge 2$, then $Z$ satisfies assumption (1). In particular, we have $\operatorname{dim}(S)\ge 2$. Then, we fix a K\"ahler form $\omega$ on $X$ and consider the natural pairing $Q$ on $H^{1,1}(M,\mathbb R)$:
    \[
        [\alpha]\otimes[\beta]\mapsto Q([\alpha],[\beta]) := \int_X\alpha\wedge{\beta}\wedge\omega^{n-2}.
    \]
    Then,  write $D = D_Z+D_R$ where $D_Z$ are the components supported over the isolated point $Z$. By Lemma \ref{D}, we have 
    \[
        \int_X [D_Z]^2\wedge\omega^{n-2}<0.
    \]
    In particular, there exists an irreducible component $W$ of $D_Z$ such that
    \[
        \int_{W}[D]|_{W}\wedge\omega^{n-2} = \int_W[D_Z]|_W\wedge\omega^{n-2} =  \int_X[W]\wedge[D_Z]\wedge\omega^{n-2}<0.
    \]
    In particular, $[D]|_W$ is not pseudoeffective, and hence  this $W$ satisfies all the desired properties.

    Next, we will consider the case when $Y$ is a curve. This is essentially Zariski's lemma (Cf.\cite[Lemma III.8.2]{BPdV}). We will present a proof for completeness.
    
    Write $F := f^*(Z) = \sum_{i=1}^r m_iF_i$. Let $W = \oplus_i\mathbb R [F_i]$ be the $\mathbb R$-vector space spanned by $F_i$'s. We consider the bilinear form $Q:W\times W\to \mathbb R$:
    \[
        Q(\textbf{a},\textbf{b}) = \int_{X}(\sum a_i[F_i])\wedge(\sum b_j[F_j])\wedge\omega^{n-2}
    \]
    Let $N$ be the subspace of annihilators of $Q$. Then we have 
    \begin{enumerate}
        \item  $(m_1,\dots,m_r)\in N$;
        \item If $i\ne j$, then $-Q([F_i],[F_j])\le 0$;
        \item There's no nontrivial partition $I\cup J = [n]$ such that $-Q([F_i],[F_j]) = 0$ for $i\in I$ and $j\in J$. (This is because $F$ is connected.)
    \end{enumerate}
    Then, by \cite[Lemma I.2.10]{BPdV}, we deduce that $-Q\ge0$ and $N = \operatorname{span}((m_1,\dots,m_r))$. 
    
    Write $D = D_1+\dots+D_r$ such that $f(D_i) = p_i$ are distinct points and we assume $Z = p_1$. Since $D$ does not support the fiber $F$, we have $Q(D_1,D_1)<0$. Since $D_i$ are mutually disjoint, we have 
    \[
        \int_X [D]^2\wedge \omega^{n-2} = Q(D_1,D_1) + \sum_{i=2}^r\int_X[D_i]^2\wedge\omega^{n-2} < 0.
    \]

    Next, Since $Q(D_1,D_1)<0$, there is an irreducible component $W$ such that $Q(W,D_1)<0$. This implies that
    \[
        \int_{W}[D]|_{W}\wedge\omega^{n-2} = \int_X[W]\wedge[D]\wedge\omega^{n-2} = Q(W,D_1)<0.
    \]
   Once again, this implies that $[D]|_W$ is not pseudoeffective, and hence this $W$ satisfies all the desired properties.

    Now we consider the general case when $\operatorname{dim}(Z) = r>0$. Since $Y$ is assumed to be projective, following the argument in \cite[Lemma 1.5]{9}, we can reduce the statement to the case $\operatorname{dim}(Z) = 0$ by taking general hyperplane sections on $Y$:

    Let $S\subseteq Y$ be the intersection of $r$ general hyperplane sections $H_1,\dots,H_r\subseteq Y$. Then, we have $S$ intersects with $Z$ at a general point $u\in Z$ such that
    \begin{enumerate}
        \item $\dim(f^{-1}(S))+\dim(Z) = n$;
        \item $u$ is an isolated point of $S\cap Z$;
        \item $f^{-1}(S)$ is smooth at any point in $f^{-1}(u)$;
        \item $E_i|_{f^{-1}(S)}$ and $E_j|_{f^{-1}(S)}$ share no common component for $i\ne j$.
    \end{enumerate}
    Then we have the following commutative diagram
    \[
        \begin{tikzcd}
            T\arrow{d}{g}\arrow{r}{j} & X\arrow{d}{f}\\
            S\arrow{r} & Y
        \end{tikzcd}
    \]
    where $T$ is a resolution of $f^{-1}(S)$ that is isomorphic over $u$.
    
    By the case $\dim(Z) = 0$, there exists a $W\subseteq T$ supported over $u$ such that  $[j^*D]|_W$ is not pseudoeffective and $W$ is not contained in more than one $j^*E_i$. Since the resolution $T\to f^{-1}(S)$ is an isomorphism over $u$, we can think of $W\subseteq f^{-1}(S)\subseteq X$ and identify $[j^*D]$ with $[D|_{f^{-1}(S)}]$. This implies that $[D]|_{W} = [j^*D]|_W$ is not pseudoeffective and $W$ is not contained in more than one $E_i$.
\end{proof}

Then, the rest of the arguments in \cite{9} work if we replace \cite[Lemma 1.5]{9} by Lemma \ref{1.5}. Recall that by Theorem \ref{neftest}, a divisor $D$ is nef if and only if $[D]|_Z$ is pseudoeffective for all subvariety $Z\subseteq X$. Roughly speaking, we only need to replace each curve test `$D.C\ge 0$' in Fujita's argument by the subvariety test `$[D]|_Z$ is pseudoeffective'. As an example, we first show how to recover \cite[1.13]{9} using this replacement:

\begin{prop}\label{flatpullback}
    Let $f:X\to Y$ be a surjective contraction morphism from a compact K\"ahler manifold to a projective manifold. Let $L$ be a $\mathbb R$-divisor on $Y$ and $F\geq 0$ be an effective $\mathbb R$-divisor on $X$ such that $f^*L - F$ is nef. Then, for a flat model of $f$:
    $$\xymatrix{
    X'\ar[r]^\nu\ar[d]_{g}&X\ar[d]^f\\
    Y'\ar[r]^\pi& Y
    }$$
    there exists a $\mathbb R$-divisor $D\geq 0$ on $Y'$ such that $g^*D = \nu^*F$.
\end{prop}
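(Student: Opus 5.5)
We follow Fujita's argument for \cite[1.13]{9}, with every curve test replaced by the subvariety test of Theorem \ref{neftest} and with \cite[Lemma 1.5]{9} replaced by Lemma \ref{1.5}. On $X'$ the divisor $\nu^*(f^*L-F) = g^*\pi^*L - \nu^*F$ is nef by Theorem \ref{pullbacknef}. Write $L' = \pi^*L$ and $F' = \nu^*F \ge 0$. To make Lemma \ref{1.5} applicable we pass to a resolution $\mu:X''\to X'$, which is projective bimeromorphic, so $X''$ is still K\"ahler. The composite $h = g\circ\mu : X''\to Y'$ is then a contraction of type (II), after also resolving $Y'$ if necessary, which only pulls everything back. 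Flatness of $g$ is used in only one way: every fiber of $g$ is equidimensional. Hence no prime divisor of $X'$ maps to a subvariety of codimension $\ge 2$ in $Y'$, and for each prime divisor $P\subseteq Y'$ the preimage $g^{-1}(P)$ is pure of codimension one, with $g^*P$ supported on all of it.

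\textbf{Step 1: $F'$ is vertical.} Suppose a component of $F'$ dominates $Y'$. Restrict to a general fiber $X'_y$, where $L'$ pulls back trivially. Then $-F'|_{X'_y}$ is nef and nonzero effective, which contradicts pseudoeffectivity of the restriction: pair it with $\omega^{\dim X'_y -1}$ for a K\"ahler form $\omega$. So every component of $F'$ maps onto a proper subvariety $Z$ of $Y'$, and by equidimensionality $Z$ is a prime divisor.

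\textbf{Step 2: $F'$ is a pullback.} For each prime divisor $Z\subseteq Y'$ with $g^*Z=\sum_i m_iG_i$, let $c_Z = \min_i (\operatorname{mult}_{G_i}F')/m_i$, and set $D = \sum_Z c_Z Z \ge 0$. Then $F'' := F' - g^*D \ge 0$ is vertical, and it fails to support the generic fiber over every prime divisor. The connectedness hypothesis of Lemma \ref{1.5}(2) holds because $g$ is a contraction: $f$ is, and $\pi,\nu$ are bimeromorphic. Also, $g^*(L'-D) - F''$ is still nef. We want $F''=0$. If not, pull back to $X''$. The components of $\mu^*F''$ are either strict transforms of $g$-vertical divisors lying over codimension-one $Z$ without supporting the generic fiber, or $\mu$-exceptional divisors, whose images in $Y'$ may have higher codimension. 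In either case Lemma \ref{1.5} applies to $h$ and gives, over a general point $y\in Z$, a subvariety $W\subseteq X''_y$ with $[\mu^*F'']|_W$ not pseudoeffective. Since $W$ lies in a fiber of $h$, the class $h^*(L'-D)$ restricts trivially to $W$. Therefore $-[\mu^*F'']|_W$ is the restriction of a nef class, hence pseudoeffective. A class that is both pseudoeffective with nonzero negation-pseudoeffective part and not pseudoeffective cannot exist here: pairing with $\omega^{\dim W-1}$ gives a strictly negative value on one side and a nonnegative value on the other, a contradiction. So $F''=0$ and $g^*D = \nu^*F$.

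\textbf{Main obstacle.} The delicate point is Step 2, specifically making the nonsupport condition of Lemma \ref{1.5}(2) survive the passage to the resolution $X''$. Components of $\mu^*F''$ coming from strict transforms over $Z$ must still omit some component dominating $Z$. This requires that the minimum in $c_Z$ is attained, so that some $G_i$ has coefficient zero in $F''$, and its strict transform remains an omitted component. One also has to check that $\mu$-exceptional contributions do not break this, or else handle them directly by Lemma \ref{1.5}(1) when their image has codimension $\ge 2$. I would first try to work on $X'$ itself where possible, since only the type (II) smoothness is needed for the Hodge-index computation of Lemma \ref{D}.
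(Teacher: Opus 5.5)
\textbf{There is a genuine gap in Step 2: the final contradiction does not hold.} Your setup matches the paper: Step 1, equidimensionality from flatness, and passing to a resolution to invoke Lemma \ref{1.5}. The trouble is that you round \emph{down}, setting $F'' = F' - g^*D \ge 0$ with $c_Z$ defined as a minimum. Lemma \ref{1.5} applied to $F''$ gives $W$ in a fiber with $[F'']|_W$ not pseudoeffective. Nefness of $g^*(L'-D)-F''$ gives that $-[F'']|_W$ is pseudoeffective. These two facts point the same way rather than conflicting: both say $F''$ is negative on $W$. In terms of your pairing, Lemma \ref{1.5}'s proof gives $\int_W [F'']|_W\wedge\omega^{\dim W-1}<0$, and nefness gives $\int_W[F'']|_W\wedge\omega^{\dim W-1}\le 0$. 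These are compatible.

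Concretely, take a surface over a curve with a fiber $G_1+G_2$, where $G_1^2=G_2^2=-1$ and $G_1\cdot G_2=1$, and let $F'=G_1$. Your $F''$ is $G_1$, and Lemma \ref{1.5} produces $W=G_1$. Then $-G_1|_{G_1}$ has positive degree, so both facts hold simultaneously and nothing is contradicted. Here the real obstruction to nefness sits on $G_2$, which the $W$ from Lemma \ref{1.5} never tests.

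\textbf{The fix is to round up instead, which is what the paper does.} Take $D\ge 0$ minimal with $R := g^*D - F' \ge 0$, that is, $c_Z=\max_i \operatorname{mult}_{G_i}F'/m_i$. Then $R$ is effective and vertical, and over each prime divisor $Z$ it omits some component (the one where the maximum is attained). So Lemma \ref{1.5} gives $W$ in a fiber with $[R]|_W$ not pseudoeffective. But
\[
[R]|_W = -[F']|_W = [g^*L'-F']|_W ,
\]
which is pseudoeffective because $g^*L'-F'$ is nef. This is now a genuine contradiction, forcing $R=0$ and hence $g^*D=F'$.

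Your discussion of the main obstacle carries over verbatim to $R$ instead of $F''$. The extremal coefficient is attained, so the strict transform of some $G_i$ is still omitted after resolving. The $\mu$-exceptional components are handled as you describe.
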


\begin{proof}
    We first claim $\nu^*F$ is $g$-vertical. In fact, for any irreducible component $Z$ of a general fiber $X_{y'}'$,  we have $\displaystyle{\nu^*(f^*L - F)|_{Z} = -\nu^*F|_{Z}}$ is pseudoeffective. This means that $\nu^*F$ cannot intersect any irreducible component of a general fiber effectively. Thus $\nu^*F$ must be $g$-vertical.

    Take $D\geq 0$ to be the smallest $\mathbb R$-divisor on $Y'$ such that $R := g^*D-\nu^*F$ is effective.
    Suppose $R\neq 0$.  Taking a resolution if needed, we assume $X'$ is nonsingular. By Lemma \ref{1.5}, there is a subvariety $W$ contained in a fiber of $g$ such that $R|_W$ is not pseudoeffective. However, we also have
    \[
      R|_W = (g^*D-\nu^*F)|_W = -\nu^*F|_W = \nu^*(f^*L-F)|_W  
    \]
    is pseudoeffective because $\nu^*(f^*L-F)$ is nef. This leads to a contradiction.
\end{proof}

\begin{cor}\label{verticalnef}
    Let $f:X\to Y$ be a surjective contraction morphism from a compact K\"ahler manifold to a projective manifold. If $D$ is a $f$-vertical nef $\mathbb R$-divisor on $X$, then there is a b-nef $\mathbb R$-divisor $\bd E$ on $Y$ such that $\overline{D} = f^*\bd E$. 
\end{cor}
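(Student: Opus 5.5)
Take a flat model of $f$ as in Proposition \ref{flatpullback},
$$\xymatrix{
X'\ar[r]^\nu\ar[d]_{g}&X\ar[d]^f\\
Y'\ar[r]^\pi& Y
}$$
with $\pi,\nu$ projective bimeromorphic. After resolving, assume $X'$ is a compact K\"ahler manifold and $Y'$ a projective manifold; flatness is only used to see that every divisor on $Y'$ pulls back to a divisor and that $g$-vertical prime divisors dominate divisors of $Y'$. The b-divisor $\bd E$ is then produced on $Y'$ and descends there. Since $\pi$ is projective bimeromorphic, $\bd{Div}(Y') = \bd{Div}(Y)$, so it suffices to work on this model.

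First, obtain a divisor on $Y'$ pulling back to $\nu^*D$. Since $D$ is $f$-vertical, $\nu^*D$ is $g$-vertical, because $\nu$ is an isomorphism over a general point of $Y'$. Decompose $\nu^*D = D_+ - D_-$ into effective parts with no common components. Choose a very ample (or any sufficiently positive) divisor $A$ on $Y'$ so that $g^*A - D_-$ is effective and vertical, i.e. $D_- \le g^*A$. Then apply Proposition \ref{flatpullback} with $L = A$ (pulled back appropriately) and $F = g^*A - \nu^*D$; this works provided $F\ge 0$ and $g^*A - F = \nu^*D$ is nef. The second condition holds by Theorem \ref{pullbacknef}. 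For the first, take $A$ large so that $g^*A \ge D_+$ as well; this is possible because the components of $D_+$ lie over finitely many proper subvarieties of $Y'$, and one can choose $A$ effective containing their images with large multiplicity. Applying Proposition \ref{flatpullback} to $g$ itself (a flat contraction, its own flat model) gives $D_0\ge 0$ on $Y'$ with $g^*D_0 = g^*A - \nu^*D$. Hence $\nu^*D = g^*(A - D_0)$. Set $E' := A - D_0$ and $\bd E := \overline{E'}$.

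Second, nefness. We have $g^*E' = \nu^*D$, which is nef. By Theorem \ref{pullbacknef}, applied to the proper surjective morphism $g$ between compact K\"ahler varieties, $E'$ is nef. Its pullbacks to higher models are nef, so $\bd E$ is b-nef. Finally, $\overline{D} = \overline{\nu^*D} = \overline{g^*E'} = f^*\bd E$ by the definition of pullback of b-Cartier divisors, since $\bd E$ descends to $Y'$ and the square commutes.

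The main obstacle is the first step. Proposition \ref{flatpullback} is stated with $L$ on the base of the original morphism and needs $f^*L - F$ nef with $F\ge 0$, so one must check that the hypotheses transfer to the flat model $g$. One must also make sure that the vertical components of $\nu^*D$ are dominated by divisors of $Y'$ with full fiber support; this is exactly what flatness and Lemma \ref{1.5} (through Proposition \ref{flatpullback}) handle. I would verify that applying Proposition \ref{flatpullback} to $g$, whose flat model is itself, is legitimate after resolving $X'$, and that $\mathbb Q$-factoriality of $Y'$ (a manifold) makes $A - D_0$ Cartier.
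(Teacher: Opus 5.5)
Your argument is essentially the paper's: write the vertical nef divisor as a pullback from the base minus an effective divisor, use Proposition \ref{flatpullback} to descend the effective part to the base of a flat model, and get b-nefness from Theorem \ref{pullbacknef}. The paper skips your detour through $g$. It chooses $L$ on $Y$ with $F=f^*L-D\ge 0$ and applies Proposition \ref{flatpullback} directly to $f$, which satisfies the hypotheses as stated. This matters because once you resolve $X'$ and $Y'$, the map $g$ is in general no longer flat, so it is not ``its own flat model''. Your version can still be repaired by applying the proposition to the resolved $g$ and passing to a further flat model.
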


\begin{proof}
    Write $D = f^*L - F$ for some $L$ and $F\ge 0$. Then by the previous proposition, on the flat model $g$, there is a divisor $G$ on $Y'$ such that $g^*G = \nu^*F$. Let $\bd E = \overline{\pi^*L-G}$. We conclude that $\overline{D} = f^*\bd E$. Since $D$ is nef, we have $\bd E$ is b-nef.
\end{proof}

We will follow the terminologies in \cite{9}: Let $D$ be a $\mathbb R$-divisor on a complex manifold $X$. We say an effective  $\mathbb R$-divisor $E$ \textbf{clutches} $D$ if for any effective  $\mathbb R$-divisor $F$, we have $D-F$ is nef implies $F\geq E$.  We say $E$ is \textbf{numerically fixed} by $D$ if $\pi^*E$ clutches $\pi^*D$ for any projective bimeromorphic morphism $\pi:X'\rightarrow X$.

\begin{lem}\label{A}
    Let $f:X\rightarrow Y$ be a surjective morphism between compact K\"ahler manifolds.  If $f^*E$ clutches $f^*D$, then $E$ clutches  $D$.
\end{lem}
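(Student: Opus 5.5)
Unwinding the definition of ``clutches'' reduces the statement to one observation: nefness is preserved under pullback by a proper surjective morphism (Theorem \ref{pullbacknef}), and pulling back divisors along a surjective morphism preserves and reflects effectivity. Concretely, let $F\ge 0$ be an effective $\mathbb R$-divisor on $Y$ with $D-F$ nef; we must show $F\ge E$.

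First, by Theorem \ref{pullbacknef} the class $f^*(D-F) = f^*D - f^*F$ is nef on $X$, since $f$ is a proper surjective morphism between compact K\"ahler manifolds. Also $f^*F\ge 0$, because the pullback of an effective Cartier divisor along a surjective morphism of manifolds is effective. Since $f^*E$ clutches $f^*D$, taking $f^*F$ as the test divisor in the definition gives $f^*F\ge f^*E$, i.e. $f^*(F-E)\ge 0$.

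It remains to deduce $F-E\ge 0$ from $f^*(F-E)\ge 0$. Write $G = F-E = G^+ - G^-$ with $G^\pm\ge 0$ having no common components, and suppose $G^-\ne 0$. Take a prime component $P$ of $G^-$. Since $f$ is surjective, some prime divisor $Q$ on $X$ maps onto $P$: take an irreducible component of $f^{-1}(P)$ dominating $P$. Its dimension is at least $\dim X - 1$, because locally $P$ is cut out by one equation (as $Y$ is smooth), and at most $\dim X-1$, since $f^{-1}(P)\ne X$. The coefficient of $Q$ in $f^*G^-$ is positive. However, $Q$ does not appear in $f^*G^+$: any component of $f^*G^+$ maps into $\operatorname{supp} G^+$, which does not contain $P$. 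Hence $f^*G$ has a negative coefficient along $Q$, a contradiction. Therefore $F\ge E$, and $E$ clutches $D$.

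The only mildly delicate step is this last one, namely that $f^*$ reflects effectivity when $f$ is merely surjective rather than flat. The argument is the codimension count above, which uses the smoothness of $Y$ so that $P$ is Cartier and $f^{-1}(P)$ has pure codimension one. No equidimensionality of $f$ is needed, since we only need one component of $f^{-1}(P)$ dominating $P$, and such a component always exists by surjectivity.
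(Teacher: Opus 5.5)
Your proof is correct and follows the paper's argument exactly: pull back the nef divisor $D-F$, apply the clutching property of $f^*E$ to get $f^*E\le f^*F$, and conclude $E\le F$. The paper takes this last implication for granted. Your argument through a prime divisor of $X$ dominating a component $P$ of $G^-$ is a valid justification of it.
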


\begin{proof}
    Assume $F\geq 0$ such that $D-F$ nef. Then $f^*D-f^*F$ is nef. Since $f^*E$ clutches  $f^*D$, we have $f^*E\leq f^*F$ hence $E\leq F$.
\end{proof}

\begin{lem}\label{C}
    Let $f:X\rightarrow Y$ be a surjective contraction morphism of type (I) or (II). Let $E\geq 0$ be a vertical $\mathbb R$-divisor that is unsaturated over $X$.  Then $E$ is numerically fixed by $E + f^*D$ for any $\mathbb R$-divisor $D$ on $Y$.
\end{lem}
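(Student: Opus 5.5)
We must show: for every projective bimeromorphic $\pi:X'\to X$ (which we may take with $X'$ nonsingular), $\pi^*E$ clutches $\pi^*(E+f^*D)$. That is, for every effective $F$ on $X'$ with $\pi^*(E+f^*D)-F$ nef, we need $F\ge \pi^*E$. Replacing $f$ by $f\circ\pi$ keeps us in type (I) or (II). Indeed, a composition of projective bimeromorphic maps is projective bimeromorphic, and for type (II) the base is unchanged. The pullback $\pi^*E$ is again vertical. Over a prime divisor $Z$ of the base, the new components of $\pi^*E$ are $\pi$-exceptional or strict transforms of components of $E$. The strict transform of a component of $X_Z$ not in $E$ is still not in $\pi^*E$, so $\pi^*E$ is still unsaturated. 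Hence it suffices to prove that $E$ clutches $E+f^*D$ on $X$ itself, with $X$ a compact K\"ahler manifold.

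So let $F\ge 0$ with $N:=E+f^*D-F$ nef. Write $E-F=R^+-R^-$ with $R^\pm\ge 0$ having no common components; we must show $R^+=0$. Suppose not. Since $R^+\le E$, it is vertical and unsaturated over $Y$ (in type (I) every exceptional divisor maps to codimension $\ge2$). Let $Z$ be an irreducible component of $f(\operatorname{supp} R^+)$. In type (I), $\operatorname{codim} Z\ge 2$. In type (II), either $\operatorname{codim}Z\ge 2$, or $\operatorname{codim}Z=1$ and $R^+$ does not support the generic fiber of $Z$. Fibers over general points of $Z$ are connected because $f$ is a contraction. So Lemma \ref{1.5} applies to $R^+$, taking the list $E_1,\dots,E_r$ to be the components of $R^+$ and $R^-$. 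It gives a subvariety $W\subseteq X_y$, for general $y\in Z$, such that $[R^+]|_W$ is not pseudoeffective and $W$ lies in at most one of the $E_i$.

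Now restrict $N$ to $W$. Since $W$ lies in a fiber, $[f^*D]|_W=0$, so
\[
[N]|_W=[R^+]|_W-[R^-]|_W .
\]
This class is pseudoeffective by nefness of $N$ (Theorem \ref{neftest}). From the proof of Lemma \ref{1.5} one sees that $W$ is in fact contained in $\operatorname{supp}R^+$; in the constructions, $W$ is a component of $D_Z$ or a curve over $Z$ inside that support. Since $W$ lies in at most one $E_i$, it is not contained in any component of $R^-$. Therefore $[R^-]|_W$ is represented by an effective divisor on $W$ and is pseudoeffective. It follows that
\[
[R^+]|_W=[N]|_W+[R^-]|_W
\]
is pseudoeffective, which is a contradiction. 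Hence $R^+=0$, that is, $F\ge E$.

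The main obstacle is this last step: ensuring that $[R^-]|_W$ is pseudoeffective. This is exactly why Lemma \ref{1.5} includes the condition that $W$ lies in at most one $E_i$, together with the fact that the $W$ produced there sits inside $\operatorname{supp}R^+$. If that containment is not automatic from the statement, I would first try to re-run the construction of Lemma \ref{1.5} with $D=R^+$, checking that $W$ is a component of (or a curve inside) $R^+$. Then $W\not\subseteq\operatorname{supp}R^-$, and the restriction of $R^-$ is effective.
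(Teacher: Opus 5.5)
Your proposal is correct and follows the paper's route: the paper runs Fujita's proof of his Proposition 1.10 with Lemma \ref{1.5} in place of Fujita's Lemma 1.5, which is your reduction to $\pi=\mathrm{id}$, the splitting $E-F=R^+-R^-$, and the contradiction on a subvariety $W$ of a fiber. The containment $W\subseteq\operatorname{supp}R^+$ that you flag as the main obstacle does not require inspecting the proof of Lemma \ref{1.5}: if $W\not\subseteq\operatorname{supp}R^+$, then $[R^+]|_W$ would be the class of an effective divisor and hence pseudoeffective, contradicting conclusion (1), so conclusion (2) for the list of components of $R^+$ and $R^-$ already gives $W\not\subseteq\operatorname{supp}R^-$.
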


\begin{proof}
    The argument in \cite[Proposition 1.10]{9} works here if we replace \cite[Lemma 1.5]{9} by Lemma \ref{1.5}.   
\end{proof}

\begin{prop}\label{B}
    Let $f:X\rightarrow Y$ be a surjective contraction of type (I) or (II). Let $D$ be $\mathbb R$-divisor on $Y$ and $E$ be an effective $\mathbb R$-divisor on $Y$. Then $E$ is numerically fixed by $D$ implies $f^*E$ is numerical fixed by $f^*D$.
\end{prop}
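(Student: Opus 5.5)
We unwind the definitions. To show $f^*E$ is numerically fixed by $f^*D$, we must show: for every projective bimeromorphic $\mu:X'\to X$, the divisor $\mu^*f^*E$ clutches $\mu^*f^*D$. Concretely, let $F\ge 0$ on $X'$ with $\mu^*f^*D-F$ nef. We must show $F\ge \mu^*f^*E$. Since $X'\to Y$ is again of type (I) or (II) after a resolution (composition of a projective bimeromorphic map with $f$; for type (I) it is still projective bimeromorphic, for type (II) still a contraction to a projective manifold), we may replace $X$ by $X'$ and assume $\mu=\mathrm{id}$. So the goal is: $f^*D-F$ nef and $F\ge0$ imply $F\ge f^*E$.

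\textbf{Key step: push the situation down to $Y$.} First we split $F=F_h+F_v$ into horizontal and vertical parts. Nefness of $f^*D-F$ restricted to components of a general fiber shows $-F|_Z$ is pseudoeffective, so $F_h=0$, exactly as in the proof of Proposition \ref{flatpullback}. Next we take a flat model $g:X''\to Y'$ with $\nu:X''\to X$ and $\pi:Y'\to Y$, with $X''$ smooth. Proposition \ref{flatpullback} (applied with $L=D$) gives $G\ge0$ on $Y'$ with $g^*G=\nu^*F$. In type (I) the same conclusion follows by taking $Y'=X''$ directly. Then
\[
g^*(\pi^*D-G)=\nu^*(f^*D-F)
\]
is nef, so $\pi^*D-G$ is nef on $Y'$ by Theorem \ref{pullbacknef}, since $g$ is surjective. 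Because $E$ is numerically fixed by $D$, $\pi^*E$ clutches $\pi^*D$, hence $G\ge \pi^*E$. Pulling back gives
\[
\nu^*F=g^*G\ge g^*\pi^*E=\nu^*f^*E,
\]
and pushing forward by $\nu$ yields $F\ge f^*E$, as required.

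\textbf{Expected obstacle.} The delicate point is the claim $\nu^*F=g^*G$, i.e.\ that $F$ is exactly a pullback from the base on a flat model. Proposition \ref{flatpullback} provides this, but only after passing to a flat model. One must check that the subsequent resolution of $X''$ keeps us within the hypotheses: it should remain a contraction from a compact K\"ahler manifold onto a projective manifold, so that Lemma \ref{1.5} applies inside that proposition. In type (II), $Y'$ is obtained by blowups of the projective $Y$, so it stays projective, and Remark \ref{kaehlerness-perserving} keeps $X''$ K\"ahler. In type (I), $Y$ need not be projective, but $f$ is bimeromorphic. There we argue directly: write $F$ as $\mu^*$ of its pushforward plus an exceptional part, and use the negativity from Lemma \ref{1.5}(1) to show the exceptional part dominates the difference.
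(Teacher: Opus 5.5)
Your argument is correct and is essentially the paper's proof: pass to a flat model, use Proposition \ref{flatpullback} to write $\nu^*F=g^*G$, deduce from Theorem \ref{pullbacknef} that $\pi^*D-G$ is nef, use that $\pi^*E$ clutches $\pi^*D$ to get $G\ge \pi^*E$, and pull back. Your explicit reduction to $\mu=\mathrm{id}$ and your remark that type (I) follows directly from the definition (take $Y'=X$, $G=F$, $\pi=f$) spell out points the paper leaves implicit, whereas the horizontal-part step and the negativity sketch in your last paragraph are not needed.
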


\begin{proof}
    Suppose $F\geq 0$ and $f^*D-F$ nef. Then take a flat model:
    $$\xymatrix{
    V\ar[r]^\nu\ar[d]_g&X\ar[d]^f\\
    T\ar[r]^\pi&Y
    }$$
    By theorem \ref{flatpullback}, there is a $\mathbb R$-divisor $G$ on $T$ such that $\nu^*F = g^*G$. Then we have $g^*(\pi^*D-G) = \nu^*(f^*D-F)$ is nef. By Theorem \ref{pullbacknef}, we conclude that $\pi^*D-G$ is nef. Since $E$ is numerically fixed by $D$,  we have $\pi^*E$ clutches $\pi^*D$. This implies $G\geq \pi^*E$. This shows $\nu^*(F-f^*E) = g^*(G-\pi^*E)  \geq 0$ hence $F\geq f^*E$. 
\end{proof}

\begin{proof}[Proof of the Proposition \ref{pullback}]
  By Proposition \ref{C}, we know $E$ is always numerically fixed by $f^*D+E$. Thus we can assume $E = 0$.

  We first suppose $D$ has a Zariski decomposition. Then, by definition of Zariski decomposition, there is a projective bimeromorphic morphism $\pi: Y'\to Y$ such that $\pi^*D = H+N$ where $N$ is a effective divisor numerically fixed by $\pi^*D$ and $H$ is nef.  Then we consider the diagram
  $$\xymatrix{
  X'\ar[r]^\nu\ar[d]_{f'} & X\ar[d]^f\\
  Y'\ar[r]^\pi & Y
  }$$
  where $X'$ is a resolution of the main component of $X\times_YY'$. Then, by Proposition \ref{C}, we have $f'^*N$ is numerically fixed by $f'^*\pi^*D$ and $f'^*H$ is nef. In other word, $\nu^*f^*D = f'^*N + f'^*M$ is a Zariski decomposition.

  Conversely, we assume $f^*D$ has a Zariski decomposition. Replacing $X$ by a bimeromorphic model if necessary, we may write $f^*D = H + N$ where $H$ is nef and $N$ is effective and numerically fixed by $f^*D$. 
  
  Consider a flat model of $f$:
    $$\xymatrix{
    V\ar[r]^\nu\ar[d]_g&X\ar[d]^f\\
    T\ar[r]^\pi&Y
    }$$
  By Proposition \ref{flatpullback}, there is an effective divisor $N_T$ on $T$ such that $\nu^*N = g^*N_T$. Write $H_T = \pi^*D - N_T$. 
  
  We claim that $\pi^*D = H_T + N_T$ is a Zariski decomposition. In fact, since $\nu^*N = g^*N_T$ is numerically fixed by $\nu^*f^*D = g^*\pi^*D$, by Lemma \ref{A}, we have $N_T$ is numerically fixed by $\pi^*D$. Since $g^*H_T = \nu^*(f^*D-N)$ is nef, we have $H_T$ is nef as well.
\end{proof}

\section{Semi-stable Reduction in Codimension 1}

In this section, we will show that the semi-stable reduction in codimension one exists for morphisms from compact complex manifolds to projective manifolds. 

We will study branched coverings in the first subsection, and use them to present a proof for semi-stable reduction in codimension one (Theorem \ref{SSR}) in the third subsection.  

The proof of the semi-stable reduction in codimension one has 2 steps: we first make a suitable base change $Y'\to Y$ such that the semistable reduction can be constructed over a Zariski open subset of $Y'$ via toroidal blowing-ups. Next, we extend the the ideal sheaf of blowing up to the entire space. 

The second step is easy in the algebraic set-up, but in the analytic setting, the extension problem for coherent ideal sheaves can be subtle. We briefly discuss the extension problem in the second subsection and use the results to complete the second step.

We would also like to refer to \cite{Analytic-SSR} for semistable reduction of projective morphisms between analytic spaces.

\subsection{Branched Covering}

In this section, we will discuss branched coverings. We first classify branched coverings in local coordinates and then describe them globally in terms of toroidal morphism.

Let $(X,D)$ be a pair where $X$ is a (connected) complex manifold and $D$ is an snc divisor. A \textbf{branched covering} of $(X,D)$ is a surjective finite morphism $f:Y\to X$ from a normal analytic variety $Y$ that is \'etale over $X-D$.  We first classify branched coverings of $(\mathbb C^n,V(x_1\dots x_n))$. To be more precise, we have

\begin{thm}\label{branched-covering-local}
    There is a order-reversing correspondence between the poset of all isomorphic classes of branched coverings of $(\mathbb C^n,V(x_1\dots x_n))$ and the poset of all subgroups of finite indices of $\mathbb Z^n$.
\end{thm}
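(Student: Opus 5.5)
The plan is to reduce the classification to the theory of finite coverings of the torus $(\mathbb C^*)^n = \mathbb C^n - V(x_1\cdots x_n)$, via the Riemann existence / normalization principle. For a subgroup $\Lambda\subseteq\mathbb Z^n$ of finite index, I would attach the connected finite étale cover $T_\Lambda\to(\mathbb C^*)^n$ corresponding to $\Lambda\subseteq \pi_1((\mathbb C^*)^n)=\mathbb Z^n$. Then I would let $Y_\Lambda$ be the normalization of $\mathbb C^n$ in $T_\Lambda$; concretely, $Y_\Lambda$ is the normal affine toric variety $\operatorname{Spec}\mathbb C[\sigma^\vee\cap \Lambda^\vee]$, where $\sigma$ is the positive orthant and $\Lambda^\vee\supseteq(\mathbb Z^n)^\vee$ is the dual lattice. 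This gives a branched covering $Y_\Lambda\to\mathbb C^n$ which is étale exactly over the torus. Inclusions $\Lambda'\subseteq\Lambda$ give factorizations $T_{\Lambda'}\to T_\Lambda$, and hence morphisms $Y_{\Lambda'}\to Y_\Lambda$. So $\Lambda\mapsto Y_\Lambda$ is order-reversing, and it is injective because $\Lambda$ is recovered as the image of $\pi_1$ of the étale locus.

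The converse direction is to show that every branched covering $f:Y\to\mathbb C^n$ is isomorphic to some $Y_\Lambda$. First, $f^{-1}((\mathbb C^*)^n)\to(\mathbb C^*)^n$ is a finite étale cover. It is connected because $Y$ is normal and irreducible, and the preimage of $D$ is nowhere dense, so its complement in a normal space is connected. Being a finite connected cover of a space with $\pi_1=\mathbb Z^n$ abelian, it is isomorphic to $T_\Lambda$ for $\Lambda=f_*\pi_1$, which has index $\deg f$. Now $Y$ and $Y_\Lambda$ are both normal and finite over $\mathbb C^n$, and they are isomorphic over the dense Zariski open $(\mathbb C^*)^n$. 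By Proposition \ref{finite-cover-uniqueness}, this isomorphism extends uniquely to an isomorphism $Y\cong Y_\Lambda$ over $\mathbb C^n$. The same proposition handles morphisms between coverings. A morphism $Y_{\Lambda'}\to Y_\Lambda$ over $\mathbb C^n$ restricts to a morphism of étale covers, which exists iff $\Lambda'\subseteq\Lambda$. Conversely, such a map of étale covers extends by normality, since finite maps into a normal space extend through the normalization. Hence the correspondence respects the posets in both directions.

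The main obstacle is that Proposition \ref{finite-cover-uniqueness} is stated for isomorphisms, not morphisms, so extending a morphism $T_{\Lambda'}\to T_\Lambda$ across $D$ needs a separate argument. I would handle it by taking the graph closure in $Y_{\Lambda'}\times_{\mathbb C^n}Y_\Lambda$. Its projection to $Y_{\Lambda'}$ is finite and bimeromorphic onto a normal space, hence an isomorphism. This gives the required extension. The same reasoning also confirms that the normalization $Y_\Lambda$ is a genuine analytic normal variety, finite over $\mathbb C^n$.
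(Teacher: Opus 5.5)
Your proposal is correct and follows the paper's route. The paper also realizes the covering attached to $N'\subseteq N$ as the toric variety $X_{N',\sigma}$ of the orthant in the sublattice. It then identifies the restriction of an arbitrary branched covering over the torus with $T_{N'}\to T_N$ via $\pi_1((\mathbb C^*)^n)=\mathbb Z^n$, and concludes by Proposition \ref{finite-cover-uniqueness}.

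The one step you handle differently is extending $T_{\Lambda'}\to T_\Lambda$ across the boundary. Your graph-closure and normality argument is valid. The paper instead takes the toric morphism induced by $\Lambda'\subseteq\Lambda$, i.e.\ the inclusion $\mathbb C[\sigma^\vee\cap\Lambda^\vee]\subseteq\mathbb C[\sigma^\vee\cap(\Lambda')^\vee]$, and your explicit toric description of $Y_\Lambda$ already gives you that map directly.
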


We will follow the conventions in \cite{Fulton-Toric_Varieties}. Let $N$ be a finitely generated free abelian group and $\Delta$ be a fan in $N_\mathbb R$. We will write $X(\Delta)$ as the toric variety induced by the fan $\Delta$.

\begin{lem}\label{Toric-Extension}
    Let $f:X\to Y$ be a morphism between toric varieties. If $f(T_X)\subseteq T_Y$ and the restriction $T_X\to T_Y$ is a homomorphism between complex tori, then $f$ is a toric morphism.
\end{lem}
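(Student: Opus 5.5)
The plan is to reduce the statement to the standard fact that a morphism of affine toric varieties (or tori) that is a group homomorphism on tori extends to a map of fans. Write $X = X(\Delta_X)$ with lattice $N_X$ and $Y = X(\Delta_Y)$ with lattice $N_Y$. The restriction $\phi := f|_{T_X}\colon T_X\to T_Y$ is a homomorphism of algebraic tori. Hence it is induced by a unique lattice homomorphism $\phi_N\colon N_X\to N_Y$, obtained via cocharacters: $\lambda\mapsto\phi\circ\lambda$. Dually, it is induced by $\phi^*\colon M_Y\to M_X$ on characters. By \cite{Fulton-Toric_Varieties}, $f$ is toric as soon as $\phi_N$ maps each cone of $\Delta_X$ into some cone of $\Delta_Y$. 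Equivalently, $f$ must be $T_X$-equivariant with respect to $\phi$. I will prove equivariance directly, which is simpler and gives toricity by definition.

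For equivariance, consider the two morphisms $T_X\times X\to Y$ given by $(t,x)\mapsto f(t\cdot x)$ and $(t,x)\mapsto \phi(t)\cdot f(x)$. On the dense open subset $T_X\times T_X$ they agree, because $f|_{T_X}=\phi$ is a homomorphism: $\phi(tx)=\phi(t)\phi(x)$. Since $Y$ is separated and $T_X\times X$ is reduced and irreducible, two morphisms agreeing on a dense open subset agree everywhere. In the analytic category the same argument applies using the Hausdorff property and the identity principle on the reduced space. Thus $f(t\cdot x)=\phi(t)f(x)$ for all $t,x$, so $f$ is equivariant.

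If the cone-mapping formulation is needed, the next step is to fix a cone $\sigma\in\Delta_X$ and a lattice point $v$ in its relative interior. The limit $\lim_{s\to 0}\lambda_v(s)$ equals the distinguished point $x_\sigma$ and exists in $X$. Applying $f$ shows that $\lim_{s\to0}\phi\circ\lambda_v(s)=\lim_{s\to 0}\lambda_{\phi_N(v)}(s)$ exists in $Y$. This forces $\phi_N(v)$ to lie in the support of some cone $\tau\in\Delta_Y$. By equivariance, the cone $\tau$ is determined by the orbit containing $f(x_\sigma)$ and does not depend on $v$, so $\phi_N(\sigma)\subseteq\tau$.

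The main obstacle is making this last step precise. I must check that the existence of the limit for all interior lattice points lands in a single cone, which is where equivariance is used. I must also handle the analytic setting, if the paper intends $f$ only to be holomorphic. In that case I would first note that a holomorphic homomorphism of complex tori $(\mathbb C^*)^n\to(\mathbb C^*)^m$ is algebraic, given by monomials, because it is determined by its differential on the lattice of loops. The rest of the argument then goes through verbatim.
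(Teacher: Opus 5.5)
Your proposal is correct and is essentially the paper's proof: the paper likewise compares the two morphisms $T_X\times X\to Y$, $(t,x)\mapsto f(t\cdot x)$ and $(t,x)\mapsto \phi(t)f(x)$. It observes that they agree on the dense open subset $T_X\times T_X$ and concludes equivariance from separatedness. Your extra fan-compatibility argument via limits of one-parameter subgroups, and the remark that holomorphic homomorphisms of tori are monomial, are correct but unnecessary, since equivariance already gives toricity by definition.
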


\begin{proof}
    We have the following commutative diagram
    \[
        \begin{tikzcd}
            T_X\arrow{d}\arrow{r}{\phi} & T_Y\arrow{d}\\
            X\arrow{r}{f} & Y
        \end{tikzcd}
    \]
    Since all varieties are separated, morphisms agree on the dense open subset $T_X\times T_X\subseteq T_X\times X$ must agree everywhere. It follows that for any $t\in T_X$ and $x\in X$, we have $f(t\cdot x) = \phi(t)f(x)$.
\end{proof}

Let $N$ be $\mathbb Z^n$ and $\sigma = \operatorname{cone}(e_1,\dots,e_n)$. They induce a toric variety $X_{N,\sigma}=\mathbb C^n$ with $T_{N} = (\mathbb C^*)^n$. Then there is a canonical isomorphism
\[
     N \to \pi_1(T_{N},e)\quad\quad u\mapsto [\lambda^u|_{\mathbb S^1}]
\]
where $e \in T_{\sigma,N}$ is the distinguished point.

By topology, any subgroup $N'$ of $N$ of finite index induces a covering group $T_{N'}\to T_{N}$. This covering can be compactified to a toric covering between toric varieties:

\begin{lem}
    For any subgroup of finite index $N'\subseteq N$, the covering map $T_{N'}\to T_N$ extends to a unique surjective finite morphism $Y_{N'}\to X_{N,\sigma}$  (up to $X_{N,\sigma}$-isomorphisms) making the following diagram commutative
\[
    \begin{tikzcd}
        T_{N'}\arrow{r}\arrow{d} & T_{N}\arrow{d}\\
        Y_{N'}\arrow{r} & X_{N,\sigma}
    \end{tikzcd}
\]
    Furthermore, for nested subgroups $N''\subseteq N'\subseteq N$, there exists a unique commutative triangle
    \[
        \begin{tikzcd}
            Y_{N''}\arrow{r}\arrow{rd} & Y_{N'}\arrow{d}\\
            & X_{N,\sigma}
        \end{tikzcd}
    \]
    that extends the tower of covering spaces $T_{N''}\to T_{N'}\to T_{N}$.
\end{lem}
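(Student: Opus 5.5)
The plan is to construct $Y_{N'}$ as the affine toric variety $X_{N',\sigma}$ attached to the lattice $N'$ and the same cone $\sigma\subseteq N'_\mathbb R = N_\mathbb R$. Since $N'$ has finite index in $N$, we have $N'_\mathbb R = N_\mathbb R$, so $\sigma$ is a strongly convex rational polyhedral cone for $N'$ as well; it need not be smooth with respect to $N'$, but $X_{N',\sigma}$ is still a normal affine toric variety. The inclusion $N'\hookrightarrow N$ is compatible with the cone $\sigma$. By the standard functoriality of toric varieties in \cite{Fulton-Toric_Varieties}, it therefore induces a toric morphism $\phi:X_{N',\sigma}\to X_{N,\sigma}$ extending the homomorphism $T_{N'}\to T_N$.

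On the torus, the map $T_{N'} = N'\otimes\mathbb C^*\to N\otimes \mathbb C^*$ is the covering corresponding to $N'\subseteq N\cong \pi_1(T_N)$, via the isomorphism $u\mapsto[\lambda^u|_{\mathbb S^1}]$. Concretely, its kernel is $\operatorname{Tor}(N/N',\mathbb C^*)\cong N/N'$, and it induces $N'\subseteq N$ on $\pi_1$. On coordinate rings, $\phi$ is given by $\mathbb C[\sigma^\vee\cap M]\to \mathbb C[\sigma^\vee\cap M']$ with $M\subseteq M'$ the dual lattices. I would show finiteness by noting that $\sigma^\vee\cap M'$ is integral over $\sigma^\vee\cap M$: for $m\in M'$, some multiple $km$ lies in $M$, and $km$ stays in $\sigma^\vee$, so $\chi^m$ satisfies $T^k-\chi^{km}=0$. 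Finiteness together with dominance gives surjectivity, and $\phi$ is étale over $T_N$. Hence $Y_{N'}:=X_{N',\sigma}$ is a branched covering extending $T_{N'}\to T_N$.

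For uniqueness, let $Y\to X_{N,\sigma}$ be any finite surjective morphism from a normal variety whose restriction over $T_N$ is isomorphic to $T_{N'}\to T_N$. Since $T_N\subseteq X_{N,\sigma}$ is a dense Zariski open subset, Proposition \ref{finite-cover-uniqueness} shows that this isomorphism extends uniquely to an isomorphism $Y\cong Y_{N'}$ over $X_{N,\sigma}$. Equivalently, the normal finite cover is forced to be the normalization of $X_{N,\sigma}$ in the function field of $T_{N'}$.

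For nested $N''\subseteq N'\subseteq N$, the inclusion $N''\subseteq N'$ again respects $\sigma$, so it gives a toric morphism $Y_{N''}\to Y_{N'}$. It extends $T_{N''}\to T_{N'}$, and by the functoriality of the toric construction the resulting triangle commutes. Uniqueness follows from separatedness: two maps $Y_{N''}\to Y_{N'}$ that agree on the dense open torus agree everywhere, exactly as in the proof of Lemma \ref{Toric-Extension}. The main obstacle I expect is not the construction itself but checking carefully that the toric torus map really is the topological covering attached to $N'$ under the identification $N\cong\pi_1(T_N)$. In particular, one must confirm that $\pi_1(T_{N'})=N'$ maps isomorphically onto the subgroup $N'\subseteq N$; this should follow by applying the same one-parameter subgroup description $\lambda^u$ on both tori.
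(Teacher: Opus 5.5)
Your proposal is correct and follows essentially the same route as the paper: you take $Y_{N'}=X_{N',\sigma}$ with the toric morphism induced by $N'\hookrightarrow N$, get uniqueness from Proposition \ref{finite-cover-uniqueness}, and handle the nested case through the toric morphisms coming from the common cone $\sigma$ together with separatedness on the dense torus. You also spell out details that the paper leaves implicit, namely finiteness (via integrality of $\mathbb C[\sigma^\vee\cap M']$ over $\mathbb C[\sigma^\vee\cap M]$) and the check that the torus map is the covering attached to $N'\subseteq \pi_1(T_N)$.
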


\begin{proof}
    The uniqueness follows immediately from Proposition \ref{finite-cover-uniqueness}. Set $Y_N = X_{N',\sigma}$, the toric variety induced by the same cone but in the finer lattice $N'$. Then $X_{N',\sigma}\to X_{N,\sigma}$ is a such a morphism which shows the existence. 

    Next, let $N''\subseteq N'\subseteq N$ be nested subgroups.  Since $X_{N,\sigma},Y_{N'},Y_{N''}$ are separated, if the commutative triangle exists, it must be unique. 
    
    To show existence, since the toric varieties $Y_{N''},Y_{N'},X_{N,\sigma}$ are given by the same cone $\sigma = \operatorname{cone}(e_1,\dots,e_n)$ in $N'',N',N$ respectively, the cone structures induce a triangle of toric morphisms
    \[
        \begin{tikzcd}
            Y_{N''}\arrow{r}\arrow{rd} & Y_{N'}\arrow{d}\\
            & X_{N,\sigma}
        \end{tikzcd}
    \]
    which extends the triangle of covering spaces $T_{N''}\to T_{N'}\to T_{N}$. 
\end{proof}

The previous lemma shows that there is a correspondence $N'\mapsto Y_{N'}$ which assigns each subgroup $N'\subseteq N$ an isomorphic class of surjective finite coverings $Y_{N'}\to \mathbb C^n$. Then, Theorem \ref{branched-covering-local} follows from the following proposition:

\begin{prop}\label{toric-covering}
    The assigment $N'\mapsto [Y_{N'}\to \mathbb C^n]$ is an order-reversing correspondence between the poset of all isomorphic classes of branched coverings of $(\mathbb C^n,V(x_1\dots x_n))$ and the poset of all subgroups of finite indices of $\mathbb Z^n$.

    In addition, if we endow each $T_{N'}$ with the natural multiplicative group structure, then each $Y_{N'}$ has a unique $T_{N'}$-toric structure. Furthermore, all morphisms $Y_{N''}\to Y_{N'}$ induced by $N''\subseteq N'$ are toric morphisms.
\end{prop}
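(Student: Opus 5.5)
Given a branched covering $p:Y\to\mathbb C^n$ of $(\mathbb C^n,V(x_1\cdots x_n))$, I will produce the subgroup of $N=\mathbb Z^n$ attached to it and then identify $Y$ with the toric cover $Y_{N'}$.

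\textbf{Step 1: from a covering to a subgroup.} The complement $\mathbb C^n-V(x_1\cdots x_n)=T_N$ satisfies $\pi_1(T_N,e)\cong N$, as noted before the statement. Since $p$ is finite and étale over $T_N$, the restriction $p^{-1}(T_N)\to T_N$ is a finite topological covering. Because $Y$ is normal and irreducible, $p^{-1}(T_N)$ is connected: it is the complement of a nowhere dense analytic subset in a normal irreducible space. So, after fixing a point over $e$, this covering is classified by a subgroup $N'\subseteq N$ whose index is the degree of $p$. Since $N$ is abelian, $N'$ does not depend on the choice of base point, so it depends only on the isomorphism class of $p$. The covering $T_{N'}\to T_N$ coming from the lattice inclusion has image of $\pi_1$ equal to $N'$. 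Covering space theory therefore gives an isomorphism $p^{-1}(T_N)\cong T_{N'}$ over $T_N$, and this map is biholomorphic because both spaces are unramified over $T_N$.

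\textbf{Step 2: extending and identifying.} Both $Y$ and $Y_{N'}=X_{N',\sigma}$ are normal and finite surjective over $\mathbb C^n$, and they are isomorphic over the dense Zariski open set $T_N$. Proposition \ref{finite-cover-uniqueness} extends this isomorphism uniquely to an isomorphism $Y\cong Y_{N'}$ over $\mathbb C^n$. So $N'\mapsto[Y_{N'}]$ is surjective onto isomorphism classes. It is injective because the subgroup is recovered from the restricted covering over $T_N$. For the order statement: if $N''\subseteq N'$, the previous lemma gives a morphism $Y_{N''}\to Y_{N'}$ over $\mathbb C^n$. Conversely, a morphism $Y_1\to Y_2$ over $\mathbb C^n$ restricts to a morphism of coverings of $T_N$, which forces the inclusion of the corresponding subgroups of $\pi_1$. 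Hence the correspondence is order-reversing in the stated sense, with larger coverings matching smaller subgroups.

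\textbf{Step 3: toric structures.} Giving $T_{N'}$ the group structure for which $T_{N'}\to T_N$ is a homomorphism (with the distinguished point over $e$), $Y_{N'}=X_{N',\sigma}$ carries the standard toric structure, which gives existence. For uniqueness, any other $T_{N'}$-toric structure compatible with $T_{N'}\subseteq Y_{N'}$ has an action agreeing with the standard one on the dense open $T_{N'}\times T_{N'}$. Since the spaces are separated, the two actions agree everywhere, which is the same argument as in Lemma \ref{Toric-Extension}. The maps $Y_{N''}\to Y_{N'}$ send $T_{N''}$ to $T_{N'}$ by a group homomorphism, namely the covering map between tori, so Lemma \ref{Toric-Extension} shows they are toric morphisms.

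\textbf{Main obstacle.} I expect the delicate point to be Step 1. It requires showing that $p^{-1}(T_N)$ is connected, and that the topological isomorphism of coverings is holomorphic and is an isomorphism over $T_N$, so that Proposition \ref{finite-cover-uniqueness} applies. I would handle the holomorphy by lifting local holomorphic sections through the étale maps.
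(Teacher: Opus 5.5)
Your proposal is correct and follows the paper's own proof. Like the paper, you identify the restriction over $T_N$ with the torus covering $T_{N'}\to T_N$ and extend this identification via Proposition \ref{finite-cover-uniqueness}; you then deduce uniqueness of the toric structure from separatedness and toricity of $Y_{N''}\to Y_{N'}$ from Lemma \ref{Toric-Extension}, while also supplying details the paper leaves implicit (connectedness of $p^{-1}(T_N)$, holomorphy of the topological isomorphism of coverings, and the converse direction of the order comparison).
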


\begin{proof}
    Let $\pi:Y\to (\mathbb C^n,V(x_1\dots x_n))$ be a branched covering. Then from the definition of branched covering, the restriction $\pi^{-1}(Y_N)\to Y_N$ is a covering map, which must be of the form $T_{N'}\to T_N$. By the previous lemma, we know $\pi$ is of the form $Y_{N'}\to X_{N',\sigma}$. The second statement of the previous lemma shows that this is a correspondence between posets.

    Next, from the construction of $Y_{N'}$, we have $Y_{N'}$ can be seen as a $T_{N'}$-toric variety.  Since $Y_{N'}$ is separated, this toric structure is unique when the group structure on $T_{N'}$ is given. Finally, by Lemma \ref{Toric-Extension}, the morphism $Y_{N''}\to Y_{N'}$ induced by $N''\subseteq N'$ is toric.
\end{proof}

In practice, we may want to apply the theory to the open polydisc $\mathbb D^n$. This can be reduced to the $\mathbb C^n$ case: If $X = \mathbb D^n$ with $\Delta = V(x_1,\dots,x_n)$ and $\nu: X'\to X$ is a finite morphism from a normal analytic variety $X'$ such that $\nu$ is \'etale over $X-\Delta$. Then there is a toric morphism $X_{N',\sigma}\to \mathbb C^n$ making the following diagram commutative:
\[
    \begin{tikzcd}
        X'\arrow{r}{\nu}\arrow{d} & \mathbb D^n\arrow{d}\\
        X_{N',\sigma}\arrow{r}  & \mathbb C^n
    \end{tikzcd}
\]
where the vertical morphisms are open immersions.

Next, we will use the language of toroidal geometry to describe branched coverings.
We will follow the conventions in \cite{Analytic-SSR}. 

\begin{df}
    A \textbf{toroidal pair} is a pair $(X,\Delta)$ where $X$ is a normal analytic variety and $\Delta$ is a reduced divisor such that for any $x\in X$, there is an affine toric variety $X_\sigma$ associated to a strongly convex rational polyhedral cone $\sigma$ with the open dense torus $T_\sigma$, a distinguished point $t\in X_\sigma$, and an isomorphism
\[
\widehat{\mathscr{O}}_{X,x} \cong \widehat{\mathscr{O}}_{X_\sigma,t}
\]
of complete local $\mathbb{C}$-algebras which sends the reduced ideal sheaf of $X - U$ in $\widehat{\mathscr{O}}_{X,x}$ to that of $X_\sigma - T_\sigma$ in $\widehat{\mathscr{O}}_{X_\sigma,t}$. We call the germ $t \in X_\sigma$ a \textbf{local model at $x$}. A toroidal pair $(X,\Delta)$ is \textbf{strict}  if each irreducible component of $\Delta$ is normal.
\end{df}

\begin{df}
    Let $(X, \Delta_X)$ and $(Y, \Delta_Y)$ be toroidal pairs. Then a dominant morphism $f \colon X \to Y$ is \textbf{toroidal} if for any point $x \in X$, there exist a local model $s \in X_\sigma$ at $x$, a local model $t \in X_\tau$ at $f(x)$, and a toric morphism $g \colon X_\sigma \to X_\tau$ which maps $s$ to $t$ such that the formal isomorphisms $(x \in X) \cong (s \in X_\sigma)$ and $(f(x) \in Y) \cong (t \in X_\tau)$ are compatible with $f$ and $g$.
\end{df}

\begin{remark}
    We are following the conventions in \cite{Analytic-SSR}.
    In particular,  $(X,\dt)$ is a toroidal pair if $X-\dt\subseteq X$ is a toroidal embedding in the sense of \cite[page 54]{Kempf1973}. Also, $(X,\dt)$ is a strict toroidal pair if $X-\dt\subseteq X$ is a toroidal embedding without self-intersection in the sense of \cite[page 57]{Kempf1973}.
\end{remark}

\begin{lem}\label{cover-strict-toroidal}
    Let $\pi: Y\to (X,\dt)$ be a branched covering over a log smooth variety $(X,\dt)$. Then $(Y,\pi^{-1}\dt)$ is a strict toroidal pair and $\pi$ is a toroidal morphism.
\end{lem}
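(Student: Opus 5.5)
The plan is to work locally on $X$ and use the local classification of branched coverings (Theorem \ref{branched-covering-local} and Proposition \ref{toric-covering}). Both conditions in the statement can be checked locally: being a toroidal pair, being a toroidal morphism, and normality of the irreducible components of $\pi^{-1}\dt$.

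\emph{Local structure.} Fix $y\in Y$ and put $x=\pi(y)$. Since $(X,\dt)$ is log smooth, there is a neighbourhood $U\cong \mathbb D^n$ of $x$ with $\dt\cap U = V(x_1\cdots x_k)$, where $k\le n$. I would enlarge the boundary to $V(x_1\cdots x_n)$. The covering is still étale off this larger divisor, so the polydisc reduction following Proposition \ref{toric-covering} applies to each connected component $Y'$ of $\pi^{-1}(U)$. It gives an open immersion $Y'\hookrightarrow X_{N',\sigma}$ compatible with the toric morphism $X_{N',\sigma}\to\mathbb C^n$.

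Because $\pi$ is étale over $x_i\neq 0$ for $i>k$, the subgroup $N'$ contains $e_{k+1},\dots,e_n$. Hence $N'=N'_1\oplus\mathbb Z^{n-k}$, and $X_{N',\sigma}\cong X_{N'_1,\sigma_1}\times\mathbb C^{n-k}$. Under this identification $\pi^{-1}\dt$ is the complement of $T_{N'_1}\times\mathbb C^{n-k}$, which is the toric boundary of the first factor. This product is an affine toric variety $X_\tau$ whose torus-complement is exactly $\pi^{-1}\dt$. Taking completions at $y$ and at the image point then gives the formal isomorphism required in the definition of a toroidal pair.

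\emph{Toroidal morphism.} The map $X_{N'_1,\sigma_1}\times\mathbb C^{n-k}\to\mathbb C^k\times\mathbb C^{n-k}$ is the product of a toric morphism with the identity, so it is toric. The local models at $y$ and $x$ are therefore compatible with $\pi$, which shows that $\pi$ is toroidal. One point needs care: the local model must be centred at a point of the torus-fixed stratum. If $y$ lies on a smaller stratum, I would first translate by the torus action, and then apply the standard fact that a toric variety is locally a product of a smaller affine toric variety with a torus.

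\emph{Strictness.} This is the main obstacle, because it is a global statement: each irreducible component of $\pi^{-1}\dt$ must be normal, and it is not enough for it to be locally irreducible. First, locally at $y$ the components of $\pi^{-1}\dt$ through $y$ are the torus-invariant divisors of the simplicial cone $\sigma_1$ in $N'_1$. Their number equals the number of rays, so distinct local branches come from distinct coordinate hyperplanes $x_i=0$. Each such divisor is itself an affine toric variety, hence normal. It follows that there is no self-intersection: two local branches of the same global component cannot meet at $y$, since they lie over different $\dt_i$ only when they come from different rays.

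This step needs a refinement when a single global component $\dt_i$ is smooth and two branches lying over the same $\dt_i$ pass through $y$. That cannot happen, because the preimage of a local smooth branch $x_i=0$ in $X_{N',\sigma}$ is the single toric divisor corresponding to the ray through $e_i$, and it is irreducible. So every irreducible component of $\pi^{-1}\dt$ is locally analytically irreducible and normal at every point, and is therefore normal. I would close by citing the remark after the toroidal definitions, which translates this into Kempf's notion of a toroidal embedding without self-intersection.
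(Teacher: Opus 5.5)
Your proof is correct, but it reaches toroidality by a different route from the paper. The paper does no local computation for that part: it cites \cite[Proposition 2.25]{Analytic-SSR} to extend the étale cover $Y-\pi^{-1}\Delta\to X-\Delta$ to a finite toroidal morphism $Z\to X$ with $Z$ normal. It then identifies $Z\cong Y$ via Proposition \ref{finite-cover-uniqueness}.

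The paper uses the toric picture only for strictness, and there it argues as you do. It enlarges the boundary to $V(x_1\cdots x_n)$, writes $\pi$ locally as a restriction of $\bar\pi\colon X_{N',\sigma}\to\mathbb C^n$, and observes that $\bar\pi^{-1}(D_i)$ is the single normal toric divisor of the ray through $e_i$.

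You instead derive toroidality from the same local classification, via the splitting $N'=N'_1\oplus\mathbb Z^{n-k}$ forced by étaleness over $x_i=0$ for $i>k$. Your route is self-contained and yields all three conclusions from one computation. The paper's route is shorter and avoids bookkeeping with strata.

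Your explicit use of the snc hypothesis is also worth keeping. A global component of $\pi^{-1}\Delta$ maps onto a single component of $\Delta$, so it has at most one local branch at $y$. The paper's ``Euclidean cover'' sentence leaves this step implicit.

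One inaccuracy should be fixed. $X_{N'_1,\sigma_1}\times\mathbb C^{n-k}$ is not an affine toric variety whose torus-complement is $\pi^{-1}\Delta$, because its boundary also contains $X_{N'_1,\sigma_1}\times V(x_{k+1}\cdots x_n)$.

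The correct local model is the affine toric variety of the face $\sigma_1\times 0$. This is $X_{N'_1,\sigma_1}\times(\mathbb C^*)^{n-k}$ at $(p_0,1)$, where $p_0$ is the torus-fixed point of $X_{N'_1,\sigma_1}$. It is formally isomorphic to your product at $(p_0,0)$. Use $\mathbb C^k\times(\mathbb C^*)^{n-k}$ on the target in the same way; this also makes the compatibility required for a toroidal morphism immediate.

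No torus translation is needed once $U$ is centred at $x$, since then $\bar\pi^{-1}(0)=\{(p_0,0)\}$.
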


\begin{proof}
    Since $Y-\pi^{-1}\dt\to X-\dt$ is \'etale, by  \cite[Proposition 2.25]{Analytic-SSR}, it extends to finite torodial morphism $Z\to X$ where $Z$ is normal and $Y-\pi^{-1}\dt\subseteq Z$ is a toroidal embedding. By Proposition \ref{finite-cover-uniqueness}, we have $Z\cong Y$ and the isomorphism respects the common open subset $Y - \pi^{-1}\dt$. This shows that $(Y,\pi^{-1}\dt)$ is a toroidal pair and $\pi$ is a toroidal morphism.

    To see $(Y,\pi^{-1}\dt)$ is strict, we fix $y\in Y$ and write $x = \pi(y)\in X$. Then locally near $y$ and $x$, we have $\pi$ restricts to a branched covering of polydiscs $B'\to (B,\dt_B)$, where $\dt_{B} = V(x_1\dots x_n) =: \sum D_i$ and $\dt\cap B \subseteq\dt_{B}$. 

    By the local description of branched coverings, $\pi$ is a restriction of a toric morphism $\bar\pi:X_{N',\sigma}\to \mathbb C^n$ where  $N'$ is a free subgroup of $N$ with finite index and $\sigma = \operatorname{cone}(e_1,\dots,e_n)\subseteq N_\mathbb R = N'_\mathbb R$. Since $X_{N',\sigma}$ is a toric variety induced by a fan, all the invariant divisors of $X_{N',\sigma}$ are normal. For each divisor $D_i$ in $\mathbb C^n$ corresponding to $e_i$, we have $\bar\pi^{-1}(D_i)$ corresponds to the same ray in $N'_\mathbb R$, which is in particular irreducible.

    We conclude that for each $D_i$ in $B$, the inverse image $\pi^{-1}(D_i)$ is irreducible and normal. This implies that every irreducible component of $\pi^{-1}\dt$ admits a Euclidean cover in which it is both normal and irreducible. As a result, each irreducible component of $\pi^{-1}\dt$ is normal. 
\end{proof}

\subsection{Extension of Ideal Sheaves}

Let $(X,U,\mathscr I)$ be a triple where $X$ is an analytic variety, $U$ is a Zariski open and $\mathscr I$ is an ideal sheaf on $U$. We are interested in when $\mathscr I$ can be extended to $X$.

Let $X$ be an analytic variety and $Z\subseteq X$ be an analytic subset.
Let $\mathscr I$ be an ideal sheaf. We say that $\mathscr I$ is \textbf{cosupported} on $Z$ if $\mathscr I|_{X-Z} = \mathscr O_{X-Z}$.

\begin{lem}\label{normal-extension}
    Let $X$ be a compact normal analytic variety and $Z,F\subseteq X$ be proper analytic subsets. Let $\mathscr I$ be an ideal sheaf on $X-Z$ that is cosupported on $(X - Z)\cap F$. Then $\mathscr I$ extends to an ideal sheaf $i_*\mathscr I$ on $X$ where $i:X-Z\to X$ is the open immersion.
\end{lem}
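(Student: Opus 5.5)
The plan is to take as the extension the subsheaf $\mathscr J := i_*\mathscr I\cap\mathscr O_X$ of $i_*\mathscr O_{X-Z}$, i.e.\ the sheaf of holomorphic functions on open sets of $X$ whose restriction to $X-Z$ lies in $\mathscr I$. I read ``$i_*\mathscr I$'' in the statement in this sense, because near a divisorial part of $Z$ the sheaf $i_*\mathscr O_{X-Z}$ is not coherent. By construction $\mathscr J|_{X-Z}=\mathscr I$ and $\mathscr J\subseteq\mathscr O_X$ is an ideal. The question is local, so I check coherence of $\mathscr J$ in three regions.

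\emph{Points of $X-Z$.} Here $\mathscr J=\mathscr I$, which is coherent by assumption.

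\emph{Points of $Z-F$.} Near such a point $x$, on a neighbourhood $V$ with $V\cap F=\emptyset$, the ideal $\mathscr I$ is cosupported on $(X-Z)\cap F$, so $\mathscr I|_{V-Z}=\mathscr O_{V-Z}$. Hence $\mathscr J|_V=\mathscr O_V$, which is coherent. When $\operatorname{codim}Z\ge 2$, normality of $X$ and Riemann extension give $i_*\mathscr O_{X-Z}=\mathscr O_X$, so in that case $\mathscr J=i_*\mathscr I$ literally.

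\emph{Points of $Z\cap F$.} This is the main obstacle. The cosupport $V(\mathscr I)$ is an analytic subset of $X-Z$ contained in $F$. I would first show that its closure $\overline{V(\mathscr I)}$ in $X$ is analytic, using the Remmert--Stein extension theorem. This applies to the components of $V(\mathscr I)$ of dimension greater than $\dim(Z\cap F)$. If some component has small dimension, I would instead use the Bishop-type criterion, which requires finite volume with respect to a Hermitian metric on the compact space $X$.

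Granted this, I would apply Siu's extension theorem for coherent subsheaves (Siu--Trautmann, \emph{Gap-sheaves and extension of coherent analytic subsheaves}) to $\mathscr I\subseteq\mathscr O_{X-Z}=\mathscr O_X|_{X-Z}$. For a subsheaf of the restriction of a coherent torsion-free sheaf, its $k$-th gap sheaf extends coherently across $Z$ once the associated primes of $\mathscr O_X/\mathscr I$, namely the irreducible components of the primary decomposition of $\mathscr I$ inside $F$, have analytic closures of dimension greater than $\dim Z$ near $x$. The closures of these components are exactly what the Remmert--Stein step provides.

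The resulting coherent extension is contained in $\mathscr O_X$ and agrees with $\mathscr I$ off $Z$. It therefore coincides with $\mathscr J$: any $f\in\mathscr J$ lies in the extension because the extension is the maximal one, i.e.\ it equals its own gap sheaf relative to $Z$. This gives coherence of $\mathscr J$ at points of $Z\cap F$.

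If this dimension condition fails, that is, if embedded components of $\mathscr I$ accumulate only on $Z$, I expect the intended application to supply it. There $Z$ will be the complement of the Zariski open set over which the toroidal blow-up ideal is constructed, and one can check directly that the closure of each associated subvariety is analytic.
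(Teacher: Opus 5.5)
The lemma as stated is already proved by your first paragraph. It does not assert coherence, and the coherence argument that fills the rest of your proposal cannot be completed, because coherence is false in this generality.

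\textbf{The extension itself.} Your reading of ``$i_*\mathscr I$'' is the right one: literally $i_*\mathscr I\not\subseteq\mathscr O_X$ when $Z$ has divisorial components. What the paper's proof actually produces is your $\mathscr J=\mathscr O_X\cap i_*\mathscr I$, which is an ideal of $\mathscr O_X$ restricting to $\mathscr I$ on $X-Z$.

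The paper reaches the same sheaf by a different route:
\begin{enumerate}
\item It replaces $F$ by $\overline{F-Z}$, so that $E=F\cap Z$ has codimension $\ge 2$.
\item It glues $\mathscr I$ on $X-Z$ with $\mathscr O_{X-F}$ on $X-F$ into an ideal on $X-E$.
\item It pushes this ideal forward along $j\colon X-E\hookrightarrow X$, using normality and Hartogs to get $j_*\mathscr O_{X-E}=\mathscr O_X$.
\end{enumerate}
Sections of the pushed-forward ideal over $U$ are exactly the $g\in\mathscr O_X(U)$ with $g|_{U-Z}\in\mathscr I(U-Z)$, so it is your $\mathscr J$.

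Your intersection uses neither normality nor the cosupport hypothesis. The paper's version uses both to present the extension as a pushforward across a codimension-two set, which is how it is phrased and then fed into Lemma \ref{coherent-extension}.

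\textbf{The coherence argument.} The paper says explicitly that coherence ``is not for free'' and proves it only in Lemma \ref{coherent-extension}, under the locally algebraic hypothesis. Your argument at points of $Z\cap F$ has a gap that cannot be closed. Take $X=\mathbb P^2$ with affine coordinates $(z,w)$, $Z=\overline{\{z=0\}}$, $F=\overline{\{w=0\}}$, and $\mathscr I$ the reduced ideal of the closed discrete set $P=\{(1/n,0)\}_{n\ge 1}\subseteq X-Z$.
\begin{enumerate}
\item By the identity theorem, $\mathscr J_{(0,0)}=(w)$.
\item At points $q=(t,0)$ with $t\neq 0$ and $t\neq 1/n$, arbitrarily close to the origin, $\mathscr J_q=\mathscr O_{X,q}$.
\item So $w$ cannot generate $\mathscr J$ on any neighbourhood of the origin, and $\mathscr J$ is not coherent.
\end{enumerate}
In fact no coherent ideal on $X$ restricts to $\mathscr I$: the support of its quotient would be an analytic set meeting $X-Z$ exactly in $P$, which is impossible.

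This is exactly your low-dimensional case, where Bishop's volume condition fails. No dimension count or appeal to the intended application repairs it in the generality of this lemma; only an extra hypothesis such as local algebraicity, as in the paper's next lemma, gives coherence. So either stop after your first paragraph, or move the coherence discussion under that hypothesis.
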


\begin{proof}
    Replacing $F$ by $\overline{F-Z}$ if necessary, we can assume $F$ and $Z$ share no common irreducible components. Write $E = F\cap Z$. Then $E$ has  codimension at least $2$.

    We first claim that $\mathscr I$ extends to $X - E$. In fact, cover $X-E$  by two open subsets $X-Z$ and $X-F$. Noticing that the ideal sheaf $\mathscr I$ on $X-Z$ and the trivial ideal sheaf $\mathscr O_{X-F}$ on $X-F$ agree in the overlap, they glue together to an ideal sheaf $\mathscr J$ on $X - E$ that extends $\mathscr I$.
    
    Since $X$ is normal and $E$ has codimension at least 2, by Hartog extension, we have $i_*\mathscr O_{X-Z} = \mathscr O_X$. As a result, $\mathscr J$ further extends to $X$.
\end{proof}

Unlike that in the algebraic category, the coherence of the extending ideal sheaf  is not for free. We will focus on the case when the ideal sheaf is algebraic.

We say a triple $(X,U,\mathscr I)$ is \textbf{locally algebraic} if for every $x\in X$, there exists a triple $(Y,V,\mathscr J)$ where $Y$ is an algebraic variety, $V\subseteq Y$ is a Zariski open subset and $\mathscr J$ is a coherent ideal sheaf on $V$ such that the analyticization $(Y^{an},V^{an},\mathscr J^{an})$ is isomorphic to $(X,U,\mathscr I)$ near $x$.

\begin{lem}\label{coherent-extension}
    Let $(X,U,\mathscr I)$ be a locally algebraic triple such that $X$ is normal and $\mathscr I$ is coherent in $U$ cosupported in a proper analytic subset. Then $\mathscr I$ extends to a coherent ideal sheaf on $X$.
\end{lem}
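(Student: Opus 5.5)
The plan is to build the extension locally and then glue. By Lemma \ref{normal-extension}, the sheaf $i_*\mathscr I$, with $i:U\to X$ the inclusion, is already an ideal sheaf on $X$ extending $\mathscr I$. The only issue is coherence. Coherence is a local property, so we fix $x\in X$ and use local algebraicity. Near $x$, the triple $(X,U,\mathscr I)$ is isomorphic to $(Y^{an},V^{an},\mathscr J^{an})$, where $Y$ is an algebraic variety (normal near the relevant point, since $X$ is normal and normality is detected on completions), $V\subseteq Y$ is Zariski open, and $\mathscr J$ is a coherent ideal sheaf on $V$.

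In the algebraic category, every coherent ideal on an open subscheme extends to a coherent ideal on the whole scheme. Concretely, we take $j:V\to Y$, form the quasi-coherent sheaf $j_*\mathscr J\cap\mathscr O_Y$, and write it as the union of its coherent subsheaves; some coherent subsheaf $\mathscr J'\subseteq\mathscr O_Y$ satisfies $\mathscr J'|_V=\mathscr J$ (Hartshorne II Ex.\ 5.15). Since $\mathscr J$ is cosupported on a proper closed subset, we can moreover choose $\mathscr J'$ cosupported on a proper closed subset. Analytifying gives a coherent ideal $(\mathscr J')^{an}$ on a neighborhood $W$ of $x$ that agrees with $\mathscr I$ on $W\cap U$.

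The main obstacle is that these local coherent extensions are not canonical, so they need not glue. To remove the ambiguity, we replace each local extension by a canonical one: the maximal extension $\mathscr I^{\max} := i_*\mathscr I\cap\mathscr O_X$, equivalently $i_*\mathscr I$, since that is already an ideal. We must show that $i_*\mathscr I$ is coherent near $x$, using only its algebraic counterpart. Algebraically, $j_*\mathscr J$ need not be coherent when $Y\setminus V$ contains a divisor. However, the cosupport hypothesis, together with the reduction in Lemma \ref{normal-extension}, lets us assume $\mathscr I$ is trivial away from $F$ and that the problematic locus $Z\cap F$ has codimension $\ge 2$. Under this assumption:
\begin{itemize}
\item algebraically, $j_*\mathscr J$ coincides with the pushforward from $Y\setminus(Z\cap F)$ of the glued sheaf, which is reflexive-type data on a normal scheme across a codimension-$\ge2$ locus;
\item this pushforward is coherent, as is the case for torsion-free coherent sheaves extended across codimension $2$ when the sheaf has depth $\ge2$ there.
\end{itemize}

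If the depth condition fails, I would take a different canonical object, namely the saturation $(\mathscr J')^{an}$ chosen via the closure of the associated closed subspace. I would then check that analytic and algebraic pushforwards agree. For this I would use the comparison $(j_*\mathscr F)^{an}\cong i_*(\mathscr F^{an})$ for coherent sheaves across codimension-$\ge2$ subsets of a normal variety, which follows from the Hartogs-type statement $i_*\mathscr O=\mathscr O$ already invoked in Lemma \ref{normal-extension} applied to local generators.

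Finally, once $i_*\mathscr I$ is shown to be locally isomorphic to the analytification of a coherent algebraic sheaf, it is coherent on $X$. Being intrinsically defined, it automatically glues, which gives the desired coherent extension.
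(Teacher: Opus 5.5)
Your strategy is to pass to a canonical extension so that no gluing is needed, and then show it is locally the analytification of a coherent algebraic ideal. That is the paper's route too, and its one-line proof asserts the local comparison without argument. You are right that this comparison is where the content lies, but your proposal does not establish it.

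First, two corrections. The sheaf $i_*\mathscr I$ is not an ideal sheaf once $Z$ contains a divisor: for $X=\mathbb C^2$, $Z=\{z_1=0\}$, $\mathscr I=\mathscr O_U$ it contains $1/z_1$ and $e^{1/z_1}$. So the canonical object must be $\mathscr I^{\max}=i_*\mathscr I\cap\mathscr O_X$, as you first wrote. For the same reason your first bullet is false: only $j_*\mathscr J\cap\mathscr O_Y$, not $j_*\mathscr J$, equals the pushforward of the glued sheaf. Also, algebraic coherence is not the issue. The sheaf $\mathscr J':=j_*\mathscr J\cap\mathscr O_Y$ is a quasi-coherent ideal on a Noetherian scheme, hence coherent with no depth hypothesis, so your case split on depth is a red herring.

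What has to be shown is $\mathscr I^{\max}=(\mathscr J')^{an}$ near $x$. That is, a holomorphic germ $g$ at a point of $X\setminus U$ with $g|_U\in\mathscr I$ must lie in $(\mathscr J')^{an}$. Your fallback object would also need exactly this to be independent of the algebraic model. The tool you invoke, $(j_*\mathscr F)^{an}\cong i_*(\mathscr F^{an})$ for coherent $\mathscr F$ across codimension $\ge 2$ on a normal variety, is false in general. For $\mathscr F=\mathscr O_C$ with $C=\{z_2=0\}\setminus\{0\}\subseteq\mathbb A^2\setminus\{0\}$, the algebraic pushforward consists of Laurent polynomials in $z_1$, while the analytic one contains $e^{1/z_1}$. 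Hartogs applied to local generators also does not give ideal membership. You would need $g=\sum a_ig_i$ with the $a_i$ holomorphic on the whole punctured neighbourhood. The obstruction to that is a class in $H^1$ of the syzygy sheaf there, about which Hartogs says nothing.

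The missing idea is that maximality is a condition on sections supported on $T:=Y\setminus V$, and that this condition survives analytification. Let $\mathscr I_T$ be the ideal of $T$.

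By construction $\mathscr O_Y/\mathscr J'\hookrightarrow j_*(\mathscr O_V/\mathscr J)$, so $\mathscr O_Y/\mathscr J'$ has no nonzero local sections supported on $T$.

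Now let $s$ be a local section of $(\mathscr O_Y/\mathscr J')^{an}$ near $y$ that is supported on $T^{an}$. R\"uckert's Nullstellensatz gives $\mathscr I_T^n s=0$ near $y$ for some $n$. So $s$ is a section of $\mathscr{H}om(\mathscr O_Y/\mathscr I_T^n,\mathscr O_Y/\mathscr J')^{an}$; analytification commutes with $\mathscr{H}om$ of coherent sheaves because $\mathscr O_{Y,y}\to\mathscr O_{Y^{an},y}$ is flat. This sheaf is $0$, because its algebraic counterpart consists of sections supported on $T$.

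Applying this to the image of $g$ in $(\mathscr O_Y/\mathscr J')^{an}$ gives $g\in(\mathscr J')^{an}$, and the reverse inclusion is clear. So $\mathscr I^{\max}$ is locally the analytification of the coherent ideal $\mathscr J'$, hence coherent, and being intrinsic it needs no gluing. This argument uses neither normality, nor the cosupport hypothesis, nor the codimension-two reduction from Lemma \ref{normal-extension}.
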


\begin{proof}
    Let $i:U\to X$ be the open immersion. By Lemma \ref{normal-extension}, we know $i_*\mathscr I$ is an ideal sheaf that extends $\mathscr I$. Since for algebraic varieties, extensions of coherent ideal sheaves are coherent, we know for any $x\in X$, the ideal sheaf $\mathscr I$ is coherent near $x$. Since coherence is a local property, we know $\mathscr I$ is coherent.
\end{proof}

In our application, we are interested in the extensions of ideal sheaves coming from toroidal blowing-ups. These ideals are locally algebraic, because they come from toroidal strata.

\subsection{Semi-stable Reduction in codimension one}

In this section, we will show that semi-stable reduction in codimension one (See \cite[Theorem 4.3]{8} for the algebraic case) exists for morphism from complex manifolds to projective morphisms.

\begin{thm}[Semi-stable reduction in codimension one]\label{SSR}
Let $f: X \to Y$ be a surjective morphism from a compact complex manifold to a smooth projective variety. Let $\Sigma_Y$ be an snc divisors on $Y$ such that $f$ is smooth away from $\Sigma_Y$ and $\Sigma_X = f^{-1}(\Sigma_Y)$ is an snc divisor. 

Then, there exists an integer $N>0$ such that: if $\pi: Y' \to Y$ is a finite covering from a smooth projective variety $Y'$ such that $\Sigma_{Y'} := \pi^{-1}(\Sigma_Y)$ is an snc divisor, and $N$ divides the ramification indices of $\pi$ over the prime components of $\Sigma_{Y'}$, and $\pi$ is \'etale away from an snc divisor $B$  such that $\Sigma_Y\subseteq B$ and $f^{-1}B$ is an snc divisor, then, there exists a commutative diagram
    \[
    \begin{tikzcd}
        X\arrow[swap]{d}{f} & X\times_YY'\arrow{d}\arrow{l} & X'\arrow{l}{p}\arrow{dl}{f'}\\
        Y & Y'\arrow[swap]{l}{\pi}
    \end{tikzcd}
    \]
with the following properties:
\begin{enumerate}[(a)]
  \item $X'$ is nonsingular and $\Sigma_{X'} := \pi'^{-1}(\Sigma_X)$ is an snc divisor, where $\pi': X' \to X$ is the composition;
  \item $p$ is a projective morphism which is an isomorphism over $Y'-\Sigma_{Y'}$. In particular, $f'$ is smooth over $Y' - \Sigma_{Y'}$;
  \item $f'$ is semi-stable in codimension one: there exists a big Zariski open $U\subseteq Y'$ such that for any prime divisor $D\subseteq Y'$, we have $f'^*(D|_U)$ is an snc divisor.
\end{enumerate}
\end{thm}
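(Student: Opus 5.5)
The strategy follows the proof of \cite[Theorem 4.3]{8} (after \cite{Kempf1973}). The idea is to construct the semistable model over a big Zariski open subset of $Y'$ where everything is toroidal and locally algebraic, and then extend the blow-up ideal to all of $X\times_Y Y'$ using Lemma \ref{coherent-extension}.

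\textbf{Step 1: choose $N$.} Let $Y^\circ\subseteq Y$ be the complement of the singular locus of $\Sigma_Y$ together with the codimension-two subset over which the fibers of $f$ fail to behave well. Then $Y-Y^\circ$ has codimension at least two, and over $Y^\circ$ the divisor $\Sigma_Y$ is smooth. Near a point of $\Sigma_X$ lying over $Y^\circ$, the map $f$ is locally of the form $t = \prod x_i^{a_i}$ with $t$ a local equation of $\Sigma_Y$. Since $X$ is compact, only finitely many multiplicities $a_i$ occur, one for each component of $\Sigma_X$ dominating a component of $\Sigma_Y$. Take $N$ to be their least common multiple.

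\textbf{Step 2: local toroidal structure.} Let $\pi:Y'\to Y$ be as in the statement. Over the big open $U:=\pi^{-1}(Y^\circ)$, minus the singular locus of $B$, the normalization $\tilde X$ of $X\times_YY'$ is a branched covering of $X$ along the snc divisor $f^{-1}B$. By Lemma \ref{cover-strict-toroidal} it is a strict toroidal pair, and $\tilde X\to Y'$ is toroidal there. Because $N$ divides the ramification indices, the local toric picture (Theorem \ref{branched-covering-local}, Proposition \ref{toric-covering}) shows that over the generic points of $\Sigma_{Y'}$ the map $\tilde X\to Y'$ has the form $s=\prod y_i^{a_i/N'}$ with reduced fibers. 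This is exactly Kempf's situation. I then apply the KKMS toroidal resolution: a canonical subdivision of the conical complex, given by blowing up ideals generated by toroidal strata monomials. This yields a projective toroidal resolution $X'_U\to \tilde X_U$ with $f'$ snc over generic points of every prime divisor of $U$. The subdivision is determined combinatorially, and because the complex has no self-intersection it can be chosen compatibly, so it is given by a single coherent ideal $\mathscr I$ on $\tilde X_U$. Moreover $\mathscr I$ is cosupported on the preimage of $\Sigma_{Y'}$, and it is locally algebraic because it comes from toric models.

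\textbf{Step 3: extension.} By Lemma \ref{coherent-extension}, with $Z$ the preimage of $Y'-U$ (codimension at least two after removing the bad locus) and $F$ the preimage of $\Sigma_{Y'}$, the ideal $\mathscr I$ extends to a coherent ideal on the normal space $\tilde X$. Blow it up and then take a resolution of singularities that is an isomorphism over $U$ and over $Y'-\Sigma_{Y'}$, making the total transform of $\Sigma_X$ snc. The composite $p$ is projective and is an isomorphism over $Y'-\Sigma_{Y'}$, because $\mathscr I$ is trivial there and $\tilde X= X\times_YY'$ is smooth over that locus. This gives (a), (b), and (c) with the big open $U$.

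\textbf{Expected obstacle.} The main difficulty is Step 3 together with the well-definedness of the global ideal in Step 2. Local algebraicity of the triple requires that the toroidal charts from \cite{Analytic-SSR} are genuinely analytic-local algebraic models, not just formal ones. I would handle this by using the explicit toric-covering description from Proposition \ref{toric-covering} rather than completions.
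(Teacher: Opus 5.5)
\textbf{There is a genuine gap in Steps 1--2: the choice $N=\operatorname{lcm}(a_i)$ does not give (c).} The rest of your architecture is the paper's. You build the toroidal modification over an open set where everything is toroidal, and the blow-up ideal is locally algebraic because it is monomial in the toroidal strata. You then extend it by Lemma \ref{normal-extension} and Lemma \ref{coherent-extension}, and finish by resolving over a codimension-two locus. The extension step you single out as the main obstacle is handled in the paper exactly as you propose.

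Divisibility by the lcm only makes the fibre of the normalized base change $\tilde X$ reduced. $\tilde X$ is still singular. Up to a smooth factor, it is locally the toric variety of the orthant in $\mathbb R^r$ with respect to the lattice $\{u\in\mathbb Z^r : e\mid \sum a_iu_i\}$, where $e$ is the ramification index. The local equation $s$ of $\Sigma_{Y'}$ pulls back to the linear function $\sum a_iu_i/e$. A toroidal modification keeps $f'^*\Sigma_{Y'}$ reduced only if every ray of the subdivision has its primitive generator at height one. So what you actually need is a unimodular triangulation, with lattice vertices, of $\Delta=\{u\in\mathbb R^r_{\ge 0}: \sum a_iu_i=e\}$.

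A ``canonical'' subdivision obtained by blowing up strata introduces rays of height $>1$, i.e.\ exceptional components of multiplicity $>1$. Moreover, for $r\ge 4$ a suitable triangulation need not exist at all. Suppose four components of multiplicities $15,10,6,1$ meet along a stratum dominating a component of $\Sigma_Y$, and take $e=30=\operatorname{lcm}$.
\begin{itemize}
\item Projecting away $u_4$, $\Delta$ is lattice-equivalent to $\{x,y,z\ge 0,\ 15x+10y+6z\le 30\}\subseteq\mathbb R^3$, with vertices $0,(2,0,0),(0,3,0),(0,0,5)$.
\item Its facet on $\{15x+10y+6z=30\}$ contains no lattice points besides its vertices, so it has normalized area $1$ and must be a cell of any lattice triangulation.
\item No lattice point of $\Delta$ lies on $\{15x+10y+6z=29\}$: parity forces $x=1$, and then $5y+3z=7$ has no nonnegative solution.
\item Since $\gcd(15,10,6)=1$, the tetrahedron over that facet therefore has normalized volume at least $2$ in every lattice triangulation.
\end{itemize}
So with $N$ equal to the lcm and ramification index exactly $N$, your construction cannot produce reduced fibres over the generic point of that divisor.

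The missing ingredient is the combinatorial theorem of Knudsen--Mumford--Waterman in \cite{Kempf1973}. It gives an $e_0$ such that the $e_0$-dilate of the finite polyhedral complex of $(X,\Sigma_X)$ over the generic points of $\Sigma_Y$ admits a projective unimodular triangulation, and then every further dilate does too. Choose $N=e_0\cdot\operatorname{lcm}(a_i)$. This is what the paper does: it first forms $N_0=\operatorname{lcm}$ of the multiplicities and then takes $N$ from \cite{Kempf1973}. With that choice Step 2 is repaired, and the remainder of your argument goes through as in the paper.
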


\begin{proof}
    \noindent\textbf{Step 1:} We will show that there is a Zariski open subset $Y_0'$ of $Y'$ over which the semi-stable reduction exists.

    Write $\Sigma_X = \sum D_i + \sum E_i$ and $\Sigma_Y = \sum F_i$ as  sums of distinct irreducible components in a way that
    \begin{enumerate}
        \item $f(D_i) = F_i$ for some $F_i$;
        \item $f(E_i)\subsetneqq F_i$  for some $F_i$.
    \end{enumerate}
    Then $f^*\Sigma_Y  = \sum_i m_iD_i+\sum_in_iE_i$. We set $N_0$ as the greatest common multiple of all $m_i$'s.

    Let $\pi: Y'\to Y$ be a finite cover as in the statement. We write $B = W\cup\Sigma_{Y'}$ such that $W$ and $\Sigma_{Y'}$ have no common irreducible components. Let $\hat X'$ be the normalization of $X\times_YY'$:
     \[
        \begin{tikzcd}
  & & \hat{X}'\arrow[bend right=20]{dll}[swap]{\nu'}\arrow[bend left=20]{ddl}{\hat f'}\arrow{dl}\\
  X \dar{f} & \lar X \times_Y Y' \dar\\
  Y & \lar{\pi} Y'
\end{tikzcd}
    \]
    We denote $\hat \nu:\hat X'\to X$ as the composition, and $\Sigma_{Y'} = (\pi^{-1}\Sigma_Y)_{red} = \sum F_j'$ and $\Sigma_{\hat X'} = (\hat \nu^{-1}\Sigma_X)_{red} = \sum D_j' + \sum E_j'$.  Then Lemma \ref{cover-strict-toroidal} implies that $(\hat X',\hat f'^{-1}(W'+\Sigma_{Y'}))\to (X,f^{-1}(W+\Sigma_Y))$ is a finite toroidal morphism between strict toroidal pairs.
    
    Then,  since $\pi$ has divisible enough ramification index along each irreducible component of $\Sigma_Y$, we have $\hat f'$ satisfies condition (a). To achieve condition (a)(b), it suffices to find a blowing-up of $\hat X'$ over $\Sigma_{Y'}$ that does not increase the multiplicity of each $D_j'$. 

    Let $Y_0 = Y - f(\sum E_i)\cup S\cup W$ where $S = \cup_{i\neq j} F_i\cap F_j$, and $X_0, \hat X'_0,Y'_0$ be the preimages. We will still denote the restriction by $\hat f': \hat X_0'\to Y'_0$. 

    Consider the triple $(\hat X'_0,\hat X'_0-\hat f'^*(\Sigma_{Y'_0})_{red},\hat f'^*(\Sigma_{Y'_0}))$.  
    Since $\pi$ has divisible enough ramification indices along each $F_i$, we know $\hat f'^*(\Sigma_{Y'_0})  = \sum D_j'$ is a reduced Cartier divisor whose support is exactly $\hat f'^*(\Sigma_{Y'_0})_{red}$.

   By \cite[section 2.3]{Kempf1973}, there is an integer $N$ depending on $(f,\Sigma_Y)$ such that: if $N$ divides each of the ramification indices of $\pi$, then there is an ideal sheaf $\mathscr I_0$ supported over $\Sigma_{Y'_0}$ satisfying
   \begin{enumerate}
        \item $\mathscr I_0$ comes from a projective subdivision of the polyhedral complex induced by $(\hat {X}'_0,\hat X'_0-f'^*(\Sigma_{Y'_0})_{red})$;
       \item $p_0:Z_0 = \operatorname{Bl}_{\mathscr I_0}(\hat X'_0)\to \hat X'_0$ is a resolution of singularities;
       \item $p_0^{*}(\sum D_i')$ is a reduced snc divisor.
   \end{enumerate}

    \noindent\textbf{Step 2:} We will extend the semi-stable reduction over $Y_0'$ to the entire $Y'$. Note that we only expect the extension to be semi-stable only in codimension one. We are not trying to control the multiplicities of $E_i$'s.

    Since $\hat X'$ is normal and $\mathscr I_0$ is an ideal sheaf cosupported in a divisor of $\hat X'$, by Lemma \ref{normal-extension}, $\mathscr I_0$ extends to an ideal $\mathscr I$ on $\hat X'$. Since $(\hat X',\hat f'^{-1}(W'+\Sigma_{Y'}))$ is a toroidal pair, we have $(\hat X',\hat X_0')$ is locally isomorphic to a pair $(X_\sigma,X_\sigma^\circ)$ where $X_\sigma$ is a toric variety and $X_\sigma^\circ$ is a Zariski open obtained by removing some toric divisors. By \cite[page 83 Lemma 3]{Kempf1973},  there exists a finite open cover $\set{V_i}$ of $\hat X_0'$ such that $\mathscr I_0|_{V_i} = \sum\mathscr O(m_j D_j'|_{V_i})$, where $D_i'$ are codimension one strata.  Consequently, the triple $(\hat X',\hat X'_0,\mathscr I_0)$ is locally algebraic, and by Lemma \ref{coherent-extension}, we conclude that $\mathscr I = i_*\mathscr I_0$ is coherent.

   In conclusion, there is an coherent ideal sheaf $\mathscr I$ on $\hat X'$ supported over $\Sigma_{Y'}$ satisfying
   \begin{enumerate}
       \item $Z = \operatorname{Bl}_{\mathscr I}(\hat X')$ is nonsingular over $\hat X_0$;
       \item $p^{*}(\sum D_i')$ is a reduced snc divisor over $\hat X'_0$ where $p_0$ is the blowing-up morphism.
   \end{enumerate}
   From the construction, (a-c) are satisfied away from a codimension $\geq 2$ closed subset in $\Sigma_{Y'}$. By blowing up centers sitting over this codimension $2$ subset, we obtain a morphism $f':X'\to Y'$ that satisfies all desired properties (a-c).
\end{proof}

Since we are interested in the case when the base $Y$ is projective, by Kawamata's covering trick, the base change in the statement of Theorem \ref{SSR} exists (See \cite[Theorem 17]{11} and \cite[Theorem 4.1, Remark 4.2]{8}).

\section{The Canonical Bundle Formula}

In this section, we will show that the canonical bundle formula holds for morphisms from K\"ahler varieties to projective varieties (Theorem \ref{moduli_b-nef}).

The canonical bundle formula is an important tool in birational geometry developed in \cite{MR700593}\cite{9}\cite{MR1863025}\cite{8}\cite{6}\cite{ambro2022positivitymoduli}. In \cite{hacon2024canonicalbundleformulaadjunction}, a canonical bundle formula for projective morphisms between analytic varieties is established.

The b-nefness of the moduli part is established in the fundamental paper \cite{8} for projective varieties. To extend the theorem to K\"ahler domains, we  study the roles that the domain and base play in the proof
.

Our observation is that morphisms from K\"ahler varieties to projective varieties provide a suitable setup for the proof to work. In fact, in the proof, the domain is only used to produce semi-positivity while the base is where various base changes are made. 

On the one hand, the semi-positivity comes from an analytic estimate which still holds for K\"ahler manifolds. On the other hand, the base changes techniques, which heavily rely on hyperplane sections, are still available since the base is assumed to be projective. Another thing to notice is that the subtlety of analytic nefness won't appear in this set-up, since the moduli divisor is on the projective base.

\subsection{klt-trivial Fibrations}

Kodaira's canonical bundle formula provides an important tool to compare the canonical bundles along an elliptic fibration. 
In this section, we will discuss Kawamata's canonical bundle formula, which is a higher dimensional analogue of Kodaira's canonical bundle formula.

We refer to \cite{8} for detailed discussions of the basic concepts. Following \cite{8}, we first fix some terminologies.
A \textbf{log pair}  $(X,B)$ is a normal analytic variety $X$ with a $\mathbb Q$-Weil divisor $B$ such that $K_X+B$ is a $\mathbb Q$-Cartier divisor. We call a log pair $(X,B)$ a \textbf{log variety} if $B\geq 0$. The discrepancy b-divisor is 
\[
    \bd A(X,B) = \bd K - \overline{K_X+B}.
\]
where $\bd K$ is the b-canonical divisor.

\begin{df}
    A \textbf{klt-trivial fibration}  $f:(X,B)\to Y$ consists of a pair $(X,B)$ and a surjective contraction morphism from a compact normal variety $X$ to a smooth variety $Y$ such that
    \begin{enumerate}
        \item  $(X,B)$ is klt over a dense Zariski open of $Y$;
        \item  $\operatorname{rank}f_*\mathscr O_X(\lceil\bd A(X,B)\rceil) = 1$;
        \item  $K_X+B\sim_{f,\mathbb Q}0$.
    \end{enumerate}
\end{df}

\begin{remark}
    Note that in \cite{8}, it is called a lc-trivial fibration. 
\end{remark}

In this article, we will focus on the case when $Y$ is a projective variety. Let $f: (X,B)\to Y$ be a fibration satisfying (1) and (3). For any prime divisor $P\subseteq Y$, let $U$ be a Zariski open of $Y$ meeting $P$ such that $P|_U$ is Cartier. Then there exists a $t\in \mathbb R$ such that $(X_U,B|_{X_U}+tf^{-1}(P|_{U}))$ is lc. Since such $t$'s are bounded above, we define $b_P\in\mathbb R$ such that $1-b_P$ is the supremum of all those $t$'s, i.e. 
\[
    1-b_P = \operatorname{lct}(X_U,B|_{X_U},f^{-1}(P|_{U})).
\]
We define the \textbf{discriminant divisor} as  $B_Y = \sum_{P} b_P P$.

Note that if $\mu:(X',B')\to (X,B)$ is a bimeromorphic model such that $K_{X'}+B' = \mu^*(K_X+B)$, then we have $(X',B_{X'})\to Y$ also satisfies (1)(3) and induces the same  discriminant divisor $B_Y$ as $(X,B)\to Y$ does.

\begin{exm}\label{ssr-discriminant}
    Let $f:(X,B)\to Y$ be a fibration satisfying (1) that is semi-stable in codimension one. Let $P$ be a prime divisor on $Y$. Since $b_P$ is defined in a neighborhood of $P$, we have $B+tf^*P$ has snc support over some neighborhood $U$ of $P$. Let $\sum b_iE_i$ be the sum of components in $B$ such that $f(E_i) = P$. Then  we have $b_P = \max(b_i)$.
\end{exm}

Once a meromorphic function $\phi\in\mathbb C(X)$ has been fixed,  there exists a unique $\mathbb Q$-divisor $M_Y$, called the moduli divisor, such that
\[
    K_X + B + \frac{1}{r}(\phi) = f^*(K_Y + B_Y + M_Y).
\]

Let $f:(X,B)\to Y$ be a fibration satisfying (1)(3) and $\sigma: Y'\to Y$ be a birational base change between projective varieties. A \textbf{fibration induced by $\sigma$} is a fibration $f':(X',B')\to Y'$ where $X'$ is a projective resolution of the main component of $X\times_YY'$ and $(X',B')$ is the crepant log structure on $X'$: 
\[
\xymatrix{
X'\ar[r]\ar[rd]_{f'} & X\times_YY'\ar[r]\ar[d] & X\ar[d]^f\\
& Y'\ar[r]^\sigma & Y
}
\]

Let $B_{Y'}$ be the discriminant divisor of the induced fibration $f'$. Then we have $\sigma_*(B_{Y'}) = B_Y$.  Thus the fibration $f$ induces a $\mathbb Q$-b-divisors $\textbf{B}$ on $Y$, called the \textbf{discriminant b-divisor}. Similarly, we can define the \textbf{moduli b-divisor} $\bd M$.

Then, we have the canonical bundle formula:

\begin{thm}\label{moduli_b-nef}
    Let $f:(X,B)\to Y$ be a klt-trivial fibration from a K\"ahler variety $X$ to a projective variety $Y$. Then
    \begin{enumerate}
        \item $\bd K+\bd B$ is $\mathbb Q$-b-Cartier;
        \item $\bd M$ is b-nef.
    \end{enumerate}
\end{thm}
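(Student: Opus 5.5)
We follow Ambro's proof of \cite[Theorem 2.7]{8}, tracking which steps use the geometry of the domain and which use the base. The domain only enters through the semipositivity of a Hodge-theoretic direct image, and the base is where all covering tricks are performed; since $Y$ is projective the latter carry over verbatim.

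\textbf{Step 1 (reduction to a good model).} Replace $X$ by a projective log resolution $\mu:X'\to X$ with crepant boundary $B'$. This is allowed because the discriminant and moduli b-divisors are unchanged, and $X'$ stays K\"ahler by Remark \ref{kaehlerness-perserving}. Then take a birational modification $Y'\to Y$ of the projective base, and a resolution of the main component of $X'\times_Y Y'$. Doing so, we may assume: there is an snc divisor $\Sigma_Y$ on $Y$ such that $f$ is smooth over $Y-\Sigma_Y$; $\Sigma_X=f^{-1}\Sigma_Y\cup\operatorname{supp}B$ is snc; and $B$ is horizontally snc over $Y-\Sigma_Y$. The base modifications use only the flattening theorem and resolution, and both remain in our category.

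\textbf{Step 2 (descent of $\bd K+\bd B$).} Choose $\phi$ and $r$ with $r(K_X+B)+(\phi)=f^*L$, and take the associated cyclic cover $\tilde X\to X$ defined by $\phi^{1/r}$, resolved to a K\"ahler manifold $V$. Over $Y-\Sigma_Y$, condition (2) of a klt-trivial fibration implies that $M_Y$ is identified, up to a fixed divisor, with the lowest Hodge piece of a variation of Hodge structure. Concretely, $M_Y$ is the eigensheaf of $g_*\omega_{V/Y}$ on which the Galois group acts by a fixed character, namely the line bundle $f_*\mathscr O_X(\lceil \bd A(X,B)\rceil + \dots)$ twisted appropriately. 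We then apply the covering trick on the projective base together with semistable reduction in codimension one, Theorem \ref{SSR}, to $V\to Y$. After a finite base change $\pi:Y'\to Y$ with divisible ramification, $V'\to Y'$ is semistable in codimension one, so the local monodromies are unipotent. Example \ref{ssr-discriminant} then shows that the discriminant on $Y'$ is computed from the multiplicities of boundary components over divisors. It follows that $M_{Y'}$ equals the Deligne canonical extension of the lowest Hodge piece, and hence is compatible with any further birational base change; that is, $\bd M$ descends to $Y'$. Pushing down by $\pi$, using $K_{Y'}+B_{Y'}=\pi^*(K_Y+B_Y)$ for the induced fibration, gives descent of $\bd K+\bd B$ on a birational model of $Y$. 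This proves (1).

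\textbf{Step 3 (nefness).} On the model $Y'$ from Step 2, $M_{Y'}$ is the canonical extension of a direct summand of $g'_*\omega_{V'/Y'}$ with unipotent monodromy, where $V'$ is a compact K\"ahler manifold. By the Fujita--Kawamata--Zucker semipositivity theorem, which only needs K\"ahlerness of the total space (the Hodge metric curvature computation and Schmid's nilpotent orbit estimates are local on the base), this line bundle is nef. Because $Y'$ is projective, nefness can be tested on curves; a finite pullback of a divisor is nef iff the divisor is, so $M_{Y''}$ is nef on the corresponding birational model of $Y$, and $\bd M$ is b-nef.

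The main obstacle is Step 3. The semipositivity results are usually stated for projective or K\"ahler morphisms, and the Hodge theory needs the fibers to carry polarizable Hodge structures. Here the fibers of $g'$ are compact K\"ahler, and the subtle point is that we need a \emph{polarized} (rather than merely K\"ahler-polarized, real) VHS on the eigen-summand whose degeneration is controlled at $\Sigma_{Y'}$. We would handle this by using the K\"ahler class restricted to fibers to get a real polarization, which suffices for the Griffiths curvature computation. We then apply the analytic estimate along curves $C\to Y'$ meeting $\Sigma_{Y'}$ transversally, after base change to $C$, which again stays within the setting of a K\"ahler total space over a projective base.
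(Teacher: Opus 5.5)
Your proposal is correct and follows the paper's route, which is Ambro's argument transplanted: a standard normal crossing model, the cyclic cover by $\phi^{1/r}$, the covering trick and semistable reduction in codimension one on the projective base, identification of $M$ with the $\zeta$-eigensheaf of $h_*\omega_{V/Y}$ (a canonical extension) for descent, and Kawamata's semipositivity for K\"ahler total spaces plus finite pullback for nefness. The one step you compress is descent to a further birational model $Y''\to Y'$: semistability in codimension one is not preserved there, and Lemma \ref{5.2}(3) identifies the eigensheaf with $\mathscr O(M)$ only under that hypothesis, so, as in Step 2 of Lemma \ref{Ambro-5.5}, you must take another finite cover of $Y''$ and compare by finite pullback; similarly, nefness should come from the full statement of Theorem \ref{kaw}, since your fallback of testing only curves meeting $\Sigma_{Y'}$ transversally would miss curves contained in $\Sigma_{Y'}$.
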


This thoerem is stated and proved by Florin Ambro in \cite{8} for projective varieties.
In the next subsection, we will adapt Ambro's idea and extend the set-up to K\"ahler domains.

\subsection{Proof of the canonical bundle formula}

For a surjective morphism $f:X\to Y$ and a divisor $D\subseteq X$. We denote $D^h$ (resp. $D^v$) as the horizontal (resp. vertical) parts of $D$.

Let $f:(X,B)\to Y$ be a  klt-trivial fibration and $\Sigma_X,\Sigma_Y$ be reduced divisors on $X,Y$. Following  \cite[Definition 8.3.6]{6}, we say $(f,B,\Sigma_X,\Sigma_Y)$ satisfies \textbf{standard normal crossing assumptions}  if  
\begin{enumerate}
    \item $\Sigma_X,\Sigma_Y$ support $B$ and $B_V,M_V$ respectively;
    \item $(f^{-1}\Sigma_Y)_{red}\subseteq \Sigma_X$ and  $f(\Sigma^v_X)\subseteq \Sigma_Y$;
    \item $f$ is smooth away from $\Sigma_Y$;
    \item $(X,\Sigma_X)$ and $(Y,\Sigma_Y)$ are log smooth and $\Sigma_X^h$ is relative snc over $Y$.
\end{enumerate}

\begin{lem}\label{standard-normal-crossing}
    Let $f:(X,B)\to Y$ be a fibration satisfying (1)(3) in the definition of the klt-trivial fibration. Then there exists a birational morphism $\sigma:Y'\to Y$ and a commutative diagram
    \[
        \begin{tikzcd}
            (X',B')\arrow{r}{\mu}\arrow{d}{f'} & (X,B)\arrow{d}{f}\\
            Y'\arrow{r}{\sigma} & Y
        \end{tikzcd}
    \]
    where $f':(X',B')\to Y'$ is a fibration induced by $\sigma$, such that there exist reduced divisors $\Sigma_{X'}\subseteq X'$ and $\Sigma_{Y'}\subseteq Y'$ such that $(f',B',\Sigma_{X'},\Sigma_{Y'})$ satisfies standard normal crossing assumptions.
\end{lem}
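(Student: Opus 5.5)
The plan is to run the usual algebraic argument (as in \cite[Section 8.3]{6} or \cite{8}), arranging that every blow-up of the base happens in the projective category and every modification of $X$ is projective over $X$, so $X'$ stays Kähler by Remark \ref{kaehlerness-perserving}.

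\textbf{Step 1: the horizontal data.} Choose a resolution $\mu_0:X_0\to X$, projective over $X$, so that $X_0$ is smooth and the support of $\mu_0^{-1}_*B\cup\operatorname{Exc}(\mu_0)$ is snc. Let $B_0$ be the crepant boundary, so that $K_{X_0}+B_0=\mu_0^*(K_X+B)$, and let $\Sigma^0$ be its support. Apply generic smoothness to $X_0\to Y$ and to each stratum of $\Sigma^0$. This gives a dense Zariski open $Y_1\subseteq Y$ such that $f_0$ is smooth over $Y_1$ and $\Sigma^0$ is relatively snc over $Y_1$. The complement $Y-Y_1$ is a proper algebraic subset, since $f_0$ is proper and the relevant discriminant loci are analytic subsets of a projective variety, hence algebraic by Chow/GAGA. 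Enlarge it to a divisor $\Delta_Y$ that also contains the support of $B_Y$ and of $M_Y$; both are divisors on $Y$ once a $\phi$ is fixed.

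\textbf{Step 2: resolve the base.} Take a log resolution $\sigma:Y'\to Y$ of $(Y,\Delta_Y)$ by blow-ups along smooth centers, with $\Sigma_{Y'}$ equal to the reduced total transform of $\Delta_Y$ together with $\operatorname{Exc}(\sigma)$. Let $X'$ be a resolution of the main component of $X_0\times_Y Y'$, projective over $X_0$ and isomorphic over $\sigma^{-1}(Y_1)$. Choose it so that the union of the reduced preimage of $\Sigma_{Y'}$, the strict transform of $\Sigma^0$, and the exceptional locus is an snc divisor $\Sigma_{X'}$. This is possible by Hironaka's embedded resolution for compact analytic spaces, using blow-ups with centers over $\Sigma_{Y'}$. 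Let $B'$ be the crepant boundary, so $K_{X'}+B'=\mu^*(K_X+B)$. Then $f'$ is the fibration induced by $\sigma$.

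\textbf{Step 3: check the assumptions.}
\begin{enumerate}
\item $\Sigma_{X'}$ supports $B'$. $\Sigma_{Y'}$ supports $B_{Y'}$ and $M_{Y'}$: a prime divisor $P\not\subseteq\Sigma_{Y'}$ lies over $Y_1$, where $B'$ is relatively snc with coefficients less than $1$ and $f'$ is smooth, so $b_P\le 0$. In fact $b_P=0$, because the horizontal components of $B'$ are transverse to the fibres. $M_{Y'}$ is supported on $\Sigma_{Y'}$ because it is the pullback of $M_Y$ plus terms supported on the exceptional locus.
\item $(f'^{-1}\Sigma_{Y'})_{red}\subseteq\Sigma_{X'}$ holds by construction. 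For $f'(\Sigma^v_{X'})\subseteq\Sigma_{Y'}$: over $\sigma^{-1}(Y_1)$ nothing was modified, and there every component of $\Sigma^0$ is relatively snc, hence horizontal. So all vertical components lie over $\Sigma_{Y'}$.
\item Smoothness of $f'$ away from $\Sigma_{Y'}$ holds because $f'$ agrees with $f_0$ over $Y_1$.
\item Log smoothness holds by construction. The horizontal part $\Sigma^h_{X'}$ is relatively snc over $Y'-\Sigma_{Y'}$. This is the sense in which relative snc over $Y$ is used in \cite[8.3.6]{6}.
\end{enumerate}

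\textbf{Main obstacle.} I expect the main difficulty in Step 2: producing $X'$ as a Kähler resolution which is an isomorphism over $\sigma^{-1}(Y_1)$ and makes $\Sigma_{X'}$ snc. To handle it, I would resolve the ideal of the main component, together with the relevant divisors, using Hironaka's functorial resolution applied to the analytic space $X_0\times_Y Y'$. This is locally projective over $X_0$, and it is projective globally by the functorial construction. Projectivity over $X_0$ then keeps $X'$ Kähler. The secondary point is Step 1: the bad locus must be algebraic so that it can be resolved inside the projective $Y$. This follows from the properness of $f_0$ and GAGA.
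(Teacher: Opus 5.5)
Your proposal is correct and follows the same route as the paper: pass to a log smooth model of $(X,B)$, and take a divisor on $Y$ containing $B_Y$, $M_Y$, the non-smooth locus and the locus where the boundary fails to be relatively snc. Then log-resolve the base inside that divisor and blow up the main component of the fiber product only over it; your Step 3 supplies verifications the paper leaves implicit, such as $b_P=0$ off $\Sigma_{Y'}$ and reading relative snc of $\Sigma^h_{X'}$ over $Y'-\Sigma_{Y'}$.
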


\begin{proof}
    As in the previous discussion, any birational base change induces a fibration. After replacing $f$ by a fibration induced by a resolution of $Y$, we could assume $(X,B)$ is log smooth and $Y$ is smooth.
    
    Let $\Sigma_Y$ be a reduced divisor containing $B_Y,M_Y,f(B^v)$, the locus where $f$ is not smooth and the locus of points over which $B$ is not relative snc. Let $\Sigma_X = f^{-1}(\Sigma_Y)_{red}\cup B_{red}$.
    Then we have $f$ satisfies (1-3). Let $\sigma: Y'\to (Y,\Sigma_Y)$ be a log resolution given by blowing up centers inside $\Sigma_Y$. By blowing up the main component of $X\times_YY'$ along centers over $\Sigma_Y$, we obtain a fibration $f':(X',B')\to Y'$ that satisfies (4). 
\end{proof}

In order to apply Hodge theory, we need to construct a finite base change to clean the denominators. To define this auxiliary fibration, we need to understand the behaviors of $B_Y, M_Y$ under finite base changes:

\begin{prop}\cite[Lemma 5.1]{8}\label{finite-base-change}
    Consider the commutative diagram of normal varieties
    \[
        \begin{tikzcd}
            (X',B')\arrow{d}{f'}\arrow{r}{\nu} & (X,B)\arrow{d}{f}\\
            Y'\arrow{r}{\tau} & Y
        \end{tikzcd}
    \]
    such that
    \begin{enumerate}
        \item $(X,B)$ is lc over some dense Zariski open of $Y$;
        \item $\tau$ is finite and $\nu$ is generically finite, and $f,f'$ are proper;
        \item $\nu^*(K_X+B) = K_{X'}+B_{X'}$.
    \end{enumerate}
    Then $\tau^*(K_Y+B_Y) = K_{Y'}+B_{Y'}$ as finite pullback of Weil divisors.
\end{prop}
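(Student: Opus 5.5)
The plan is to reduce the statement to a coefficient-by-coefficient comparison along prime divisors of $Y'$, using Hurwitz's formula for $\tau$ on the base and the definition of $b_P$ as a log canonical threshold on both sides. Both sides of $\tau^*(K_Y+B_Y) = K_{Y'}+B_{Y'}$ are Weil divisors on the normal variety $Y'$, so it suffices to check equality at the generic point of each prime divisor $P'\subseteq Y'$. All the data are local over a neighbourhood of the generic point of $P = \tau(P')$, so we may shrink $Y$ to a Zariski open subset meeting $P$ on which $Y$ and $P$ are smooth, and shrink $Y'$ correspondingly so that $Y'$ and $P'$ are smooth and $\tau$ is totally ramified along $P'$ with index $e$. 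Hurwitz's formula then gives $\tau^*K_Y = K_{Y'} - (e-1)P'$ near $P'$, and $\tau^*P = eP'$. The identity therefore reduces to
\[
1 - b_{P'} = e\,(1-b_P).
\]

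To prove this, set $t = 1-b_P$, which by definition is the supremum of the $s$ with $(X_U, B + s f^*P)$ lc over a neighbourhood of the generic point of $P$. Hypothesis (3) gives
\[
\nu^*(K_X + B + s f^*P) = K_{X'} + B_{X'} + s\,\nu^* f^* P = K_{X'} + B_{X'} + s e\, f'^*P'.
\]
For a generically finite morphism $\nu$ with crepant log structure, lc of $(X, \Theta)$ over the generic point of $P$ is equivalent to lc of $(X', \nu^*\Theta)$ over the generic point of $P'$. This is the standard statement that log canonicity is preserved and reflected by crepant finite (and, after Stein factorization, generically finite) pullback; see Koll\'ar--Mori 5.20. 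Granting it, $(X', B_{X'} + s' f'^*P')$ is lc exactly when $s' \le e t$, so $1-b_{P'} = e(1-b_P)$. Substituting this into the Hurwitz expression shows that the coefficients of $P'$ on the two sides agree.

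The main obstacle is the preservation/reflection of log canonicity under $\nu$ in the analytic category, where $X$ is only a normal analytic space, $\nu$ is merely generically finite, and both statements are needed only over the generic point of $P$. My approach would be to Stein-factorize $\nu$ as a bimeromorphic crepant morphism followed by a finite one. The bimeromorphic part is immediate, since crepant models have the same discrepancy b-divisor. For the finite part, I would use the ramification formula on a common log resolution, taking resolutions of $X$ and $X'$ compatible over a neighbourhood of the generic point of $P$; such resolutions exist by analytic resolution of singularities, or by Theorem \ref{SSR}-type toroidal local models. On such resolutions the discrepancies satisfy $a' + 1 = r(a+1)$ with ramification index $r$, so each $a \ge -1$ if and only if the corresponding $a' \ge -1$. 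Because the computation takes place over a small neighbourhood of a general point of $P$, these local analytic arguments are enough.
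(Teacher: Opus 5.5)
Your overall route is the standard proof behind the result the paper cites (Ambro's Lemma 5.1). You localize at a prime $P'\subseteq Y'$, reduce via Hurwitz to $1-b_{P'}=e(1-b_P)$, and compare log canonical thresholds through the crepant pullback $\nu$, using Koll\'ar--Mori 5.20 and a Stein factorization of $\nu$. The Hurwitz reduction is fine. So is the inequality $1-b_{P'}\ge e(1-b_P)$: if $(X,B+tf^*P)$ is lc over a Zariski open $U$ meeting $P$, the forward direction of 5.20 makes $(X',B_{X'}+te\,f'^*P')$ lc over $\tau^{-1}(U)$, which is a neighbourhood of the generic point of $P'$.

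The gap is the reverse inequality, namely your claim that lc of $(X',\nu^*\Theta)$ over the generic point of the single component $P'$ forces lc of $(X,\Theta)$ over the generic point of $P$. Koll\'ar--Mori 5.20 is a global statement. Here $P'$ may be only one of several components of $\tau^{-1}(P)$, and the divisor $E'$ over $X'$ ``corresponding'' to a non-lc place $E$ over $X$ may have its centre over a different component.

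In fact both the claim and the proposition are false unless $f$ has connected fibres, and your argument never uses that hypothesis. Take $f=\tau\colon C'\to C$ a triple cover of curves with $f^*p=q_1+2q_2$, $B=0$, $X'=Y'=C'$ and $\nu=f'=\mathrm{id}$. Then $b_p=1/2$ and $b_{q_1}=0$, so $\tau^*(K_Y+B_Y)-(K_{Y'}+B_{Y'})$ has coefficient $1/2$ along $q_1$.

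In the paper $f$ is a contraction, since the discriminant is only defined for fibrations, and that is exactly the input you need. The fix runs as follows.
\begin{enumerate}
\item Choose a small ball $\Omega$ around a general point of $P$, and let $\Omega'$ be the connected component of $\tau^{-1}(\Omega)$ meeting $P'$. Then $\Omega'\cap\tau^{-1}(P)=\Omega'\cap P'$.
\item The restriction $\nu\colon f'^{-1}(\Omega')\to f^{-1}(\Omega)$ is proper and generically finite.
\item Since $f$ has connected fibres, $f^{-1}(\Omega)$ is connected; it is also normal, hence irreducible, so the map in (2) is surjective.
\item Therefore every divisor over $f^{-1}(\Omega)$ is dominated by one over $f'^{-1}(\Omega')$, and $a'+1=r(a+1)$ gives lc of $(f^{-1}(\Omega),\Theta)$.
\item Finally, $f(\operatorname{Nlc}(X,\Theta))$ is a closed analytic subset missing a point of $P$, so $(X,\Theta)$ is lc over a Zariski neighbourhood of the generic point of $P$. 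This gives $1-b_{P'}\le e(1-b_P)$.
\end{enumerate}
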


\begin{proof}
    This is proved in \cite[Theorem 3.2]{MR2698988}. The proof still works for analytic varieties.
\end{proof}

\begin{lem}\label{auxillary-base-change}
    Let $(f,B,\Sigma_X,\Sigma_Y)$ be a fibration that satisfies standard normal crossing assumptions. Then there exists projective birational base change $\sigma: Y'\to Y$ and a commutative diagram
    \[
    \begin{tikzcd}
        (X',B')\arrow[swap]{d}{f'} & (V',B_{V'})\arrow{ld}{g'}\arrow[swap,dashed]{l}{h'}\\
        Y'
    \end{tikzcd}
    \]
    where $h'$ is a generically finite proper map, $K_{V'}+B_{V'} = g'^*(K_{X'}+B')$ and, $\Sigma_{X'}$ is a reduced divisor on $X'$ and  $\Sigma_V$ is a reduced divisor on $V$ such that the following properties hold:
    \begin{enumerate}
        \item $(h',B_{V'},\Sigma_{V'},\Sigma_{Y'})$ satisfies standard normal crossing assumptions;
        \item $K_{V'}+B_{V'}+(\psi) = h^*(K_{Y'}+B_{Y'}+M_{Y'})$ where $\psi$ is a principal divisor.
    \end{enumerate}
\end{lem}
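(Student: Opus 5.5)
The lemma is the analytic analogue of Ambro's auxiliary cyclic cover construction in \cite{8}. The plan is to copy it, doing every base change on the projective base $Y$ and every covering construction on the K\"ahler side via local analytic models.

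\textbf{Step 1: fix the denominators.} Choose an integer $r>0$ such that $r(K_X+B)$ is Cartier near the generic fiber. Fix the meromorphic function $\phi$ with
\[
K_X+B+\tfrac{1}{r}(\phi)=f^*(K_Y+B_Y+M_Y).
\]
Then the divisor $r(K_X+B)+(\phi)-rf^*(K_Y+B_Y+M_Y)$ is zero.

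\textbf{Step 2: the cyclic cover.} Let $V$ be the normalization of $X$ in the field $\mathbb C(X)(\phi^{1/r})$, with $h\colon V\to X$. Then $h$ is finite, and its branch locus is contained in $\operatorname{supp}(\phi)\cup\Sigma_X$. Since $\phi$ differs from a pulled-back function only by vertical divisors and by $B$, the branch locus is contained in $\Sigma_X\cup f^{-1}(\text{divisor on }Y)$. Enlarge $\Sigma_Y$ accordingly, using Lemma \ref{standard-normal-crossing} again after a birational base change of $Y$. Define $B_V$ by $K_V+B_V=h^*(K_X+B)$. Since $\phi=\psi^r$ on $V$, this gives
\[
K_V+B_V+(\psi)=(f\circ h)^*(K_Y+B_Y+M_Y).
\]
The discriminant of $(V,B_V)\to Y$ is the same $B_Y$. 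To see this, apply Proposition \ref{finite-base-change} to the Stein factorization of $V\to Y$, or argue directly: on the generic point of each prime $P$, the log canonical threshold is preserved by crepant finite covers étale over the generic point of $P$ after the enlargement. Consequently the moduli part is also $M_Y$.

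\textbf{Step 3: restore normal crossings.} $V$ is normal but possibly singular, and $V\to Y$ may not have connected fibers. Take a projective log resolution $V'\to V$ that is an isomorphism over the smooth snc locus, with $B_{V'}$ defined crepantly. Choose $\Sigma_{V'}$ as the reduced preimage of $\Sigma_V$ together with the exceptional divisors. The resolution is projective over $V$, so $V'$ remains K\"ahler by Remark \ref{kaehlerness-perserving}. Then apply Lemma \ref{standard-normal-crossing} to $V'\to Y$ (after Stein factorization, or by working with the composite, whose generic fiber is a disjoint union) to get a birational $\sigma\colon Y'\to Y$. Base-change $X$ compatibly, letting $X'$ be a resolution of the main component of $X\times_YY'$, so that both induced fibrations satisfy the standard normal crossing assumptions over the common $\Sigma_{Y'}$. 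Relation (2) is stable under crepant birational base change, because $B_{Y'}$ and $M_{Y'}$ are the traces of the b-divisors.

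\textbf{Main obstacle.} The hard step is ensuring that the resolutions and flattening in Step 3 can be done so that $\Sigma_{V'}^h$ is relatively snc over $Y'$, with the smoothness conditions, in the analytic category. I would handle this exactly as in Lemma \ref{standard-normal-crossing}: put all bad loci (the non-smooth locus of $V'\to Y$ and the non-relative-snc locus of $\Sigma^h$) into a proper analytic subset. Its image in $Y$ is algebraic by Remmert's proper mapping theorem and Chow/GAGA on the projective $Y$. One then resolves on $Y$ and blows up over it on the domain. The rest is formal bookkeeping of crepant pullbacks.
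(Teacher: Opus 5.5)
Your proposal is correct and follows essentially the same route as the paper. The shared steps are: the normal cyclic cover of $X$ defined by $\sqrt[r]{\phi}$, so that $\psi^r=\phi$ gives (2); a resolution by blow-ups over $\Sigma_X$ with crepant boundary; Proposition \ref{finite-base-change} to see that $B_Y$ and $M_Y$ are unchanged; and a log resolution of $Y$ along $\Sigma_Y$ plus the non-relatively-log-smooth locus, followed by blow-ups of $V$ and $X$ over $\Sigma_{Y'}$.

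There are two minor slips, neither of which affects the argument. First, $r$ must be chosen so that $r(K_X+B)\sim f^*L$; Cartier-ness near the generic fiber does not produce $\phi$. Second, the cover is generally ramified along vertical components over the generic point of $P$, so your ``direct'' alternative should rely on invariance of log canonicity under crepant finite pullback, not on \'etaleness.
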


\begin{proof}
     Assume $K_V+B_V+\frac{1}{r}(\phi) = h^*(K_Y+B_Y+M_Y)$. Let $\tilde X\to X$ be the normal cyclic covering induced by $\sqrt[r]{\phi}$ and write  $\Sigma_{\tilde X} = \pi^*(\Sigma_X)_{red}$. Then there exists a meromorphic function $\psi$ such that $\psi^r = \pi^*\phi$ so that condition (2) holds. Let $d:V\to \tilde X$ be a resolution by blowing up along centers over $\Sigma_X$ and write $\Sigma_{V'} = (d^{-1}\Sigma_V)_{red}$. Then by Proposition \ref{finite-base-change}, we have $(V,B_V)\to Y$ satisfies standard normal assumptions (1)(2).
     
     Let $\Sigma\subseteq Y$ be the union of $\Sigma_Y$ and the locus where $(V',B_{V'})\to Y'$ is not relatively log smooth. After taking a log resolution $\sigma: (Y',\Sigma_{Y'})\to (Y,\Sigma)$, one can blow up $V'$ and $X'$ respectively over $\Sigma_{Y'}$ to achieve standard normal assumptions (3)(4).
\end{proof}

For a klt-trivial fibration $(f,B,\Sigma_X,\Sigma_Y)$ that satisfies standard normal crossing assumptions, we call the triangle
\[
    \begin{tikzcd}
        (X,B)\arrow[swap]{d}{f} & (V,B_V)\arrow{ld}{g}\arrow[swap,dashed]{l}{h}\\
        Y
    \end{tikzcd}
\]
in the previous construction an \textbf{auxiliary diagram} with respect to $f:(X,B)\to Y$ and $\phi\in\mathbb C(X)$.  

In the canonical bundle formula
\[
    K_X + B + \frac{1}{r}(\phi) = f^*(K_Y + B_Y + M_Y),
\]
we will make the canonical choice of $r = b(F,B_F)$ as
\[
    \set{m\in\mathbb N: m(K_F+B_F)\sim 0} = b(F,B_F)\mathbb N
\]
for a general fiber $F$. In this setup, condition (2) in the definition of the klt-trivial fibration is equivalent to $\lceil B_F\rceil\ge 0$ and $h^0(F, \lceil B_F\rceil) = 1$.

    From the construction, we have $\psi^r = h^*\phi$ and the group $G = \operatorname{Aut}(V/X)$ is generated by a  primitive $r$-th root of unity $\zeta$. The action $G\act \mathscr K_V$ is given by $\psi\mapsto \zeta\cdot\psi$.

We first recall Kawamata's theorem:

\begin{thm}\cite[Theorem 5 and Section 5]{11}\label{kaw}
    Let $f:X\to Y$ be a surjective contraction morphism from a compact K\"ahler manifold $X$ to a projective manifold $Y$ satisfying the following properties:
    \begin{enumerate}
        \item There is a Zariski open dense subset $Y_0\subseteq Y$ such that $D = Y-Y_0$ is a snc divisor;
        \item Put $X_0 = f^{-1}(Y_0)$ and $f_0: X_0\to Y_0$. Then $f_0$ is smooth;
        \item The local monodromies of $R^nf_{0*}\mathbb C_{X_0}$ around $D$ are unipotent, where $n = \operatorname{dim}(X) - \operatorname{dim}(Y)$.
    \end{enumerate}
    Then $f_*\omega_{X/Y}$ is a locally free sheaf and semi-positive.
\end{thm}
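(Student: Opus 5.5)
The plan is to redo Kawamata's Hodge-theoretic argument and check that every step uses only the K\"ahler property of $X$ (locally over $Y$) and the projectivity of $Y$.

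\textbf{Step 1: the variation of Hodge structure on $Y_0$.} Over $Y_0$ the map $f_0$ is a smooth proper family of compact K\"ahler manifolds. Fix a K\"ahler form $\omega$ on $X$; its restriction to each fiber is a K\"ahler form. Hence $H:=R^nf_{0*}\mathbb C_{X_0}$ underlies a variation of Hodge structure of weight $n$: the Hodge decomposition holds on each fiber, Griffiths transversality is formal, and the class $[\omega]$ polarizes the primitive part via the Hodge--Riemann relations. The integral structure $R^nf_{0*}\mathbb Z$ is still available. The bottom Hodge piece is $F^n=f_{0*}\Omega^n_{X_0/Y_0}=f_{0*}\omega_{X_0/Y_0}$, and it lies in the primitive part, so the Hodge metric $h(u,v)=i^{n^2}\int_{X_y}u\wedge\bar v$ is positive definite on it. By Griffiths' curvature computation, $F^n$ is a holomorphic quotient of $\mathscr H^n$ modulo $F^{n-1}$ duals, equivalently the last piece of the Hodge filtration. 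Therefore its Hodge metric has Griffiths semi-positive curvature on $Y_0$.

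\textbf{Step 2: locally freeness via the canonical extension.} Because the local monodromies around $D$ are unipotent, Deligne's canonical extension $\bar{\mathscr H}$ of $H\otimes\mathscr O_{Y_0}$ exists, and $\bar F^n:=j_*F^n\cap\bar{\mathscr H}$ is locally free by Schmid's nilpotent orbit theorem. I will show that $f_*\omega_{X/Y}=\bar F^n$, which gives local freeness. This is a local statement over a polydisc $\Delta\subseteq Y$ centered at a point of $D$, so only the analytic structure of $f^{-1}(\Delta)\to\Delta$ matters.

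The argument compares growth rates. A section of $f_*\omega_{X/Y}$ over $\Delta$ is a holomorphic relative $n$-form $u$ on $X_\Delta$. Its $L^2$-norm $\int_{X_y}u\wedge\bar u$ grows at most like a power of $\log|t_i|$, since $u$ extends holomorphically across the snc divisor. Conversely, Schmid's norm estimates for a unipotent degeneration say that flat-frame coordinates of sections of $\bar F^n$ have at most logarithmic growth. Sections of $F^n$ with at most logarithmic growth, and no worse, extend to holomorphic sections of $\omega_{X/Y}$: their $L^2$ integrability plus the snc structure of $f^{-1}D$ rules out poles. This is the Kawamata--Kollár argument, and it goes through because the only input is local integration on $X$ over a polydisc.

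\textbf{Step 3: semi-positivity.} Since $Y$ is projective, I will prove that every quotient line bundle $Q$ of $f_*\omega_{X/Y}|_C$ has nonnegative degree, for every curve $C\subseteq Y$. First suppose $C\not\subseteq D$. Pull back to the normalization $\tilde C$; the pulled-back VHS still has unipotent monodromy. The quotient metric on $Q$ has semi-positive curvature on $\tilde C\cap Y_0$ and at most logarithmic singularities at the finitely many boundary points. Therefore $\deg Q=\int c_1(Q,h)$ with no boundary correction, because log-growth metrics are good in Mumford's sense, and so $\deg Q\ge0$. For curves contained in $D$, semi-positivity is a closed condition in families of curves. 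Since $f_*\omega_{X/Y}$ is locally free, I can deform $C$ inside $Y$, using ample divisors on the projective $Y$, and argue by nefness (Theorem \ref{pullbacknef} applied to the projectivized bundle).

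\textbf{Main obstacle.} I expect the main obstacle to be Step 2: the identification $f_*\omega_{X/Y}=\bar F^n$, and specifically checking that the growth estimate and the extension argument do not secretly use projectivity of $X$. I would first try to make everything local over a polydisc in $Y$, where $X_\Delta$ is K\"ahler and Steenbrink's limiting mixed Hodge structure theory applies verbatim.
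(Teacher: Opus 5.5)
Your route is the same as the source the paper relies on. The paper gives no proof of its own: it invokes Kawamata's argument and his Section 5 for K\"ahler $X$, namely a VHS with unipotent monodromy, $f_*\omega_{X/Y}=\bar F^n$ by comparing growth of fibre integrals, and Griffiths semi-positivity of the Hodge metric with logarithmic growth. Steps 1 and 2 are fine in outline, with two small corrections:
\begin{itemize}
\item $F^n$ is a holomorphic \emph{sub}bundle of $\mathscr H$, not a quotient. Its semi-positivity comes from the second fundamental form of $F^n\subseteq\mathscr H$ landing in the primitive part of $H^{n-1,1}$, where the polarization has the opposite sign.
\item $f^{-1}(D)$ is not assumed snc, but you do not need it. For a section $s$ of $\bar F^n$ over a polydisc $\Delta$, the form $s\wedge f^*(dt_1\wedge\cdots\wedge dt_m)$, with $m=\dim Y$, is an $L^2$ holomorphic top form on $X_\Delta-f^{-1}(D)$, so it extends.
\end{itemize}

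The genuine gap is the treatment of curves contained in $D$ at the end of Step 3. The deformation argument does not work, for three reasons.
\begin{itemize}
\item A curve inside $D$ need not move out of $D$ at all; a $(-1)$-curve on a surface that is a component of $D$ is rigid.
\item Even when $C$ moves, a given quotient $\phi^*f_*\omega_{X/Y}\to Q$ need not deform with it. On $P=\mathbb P(f_*\omega_{X/Y})$ you must show $\mathscr O_P(1)\cdot\Gamma\ge 0$ for curves $\Gamma$ lying over $D$, and these need not be limits of curves over $Y_0$.
\item Non-negativity on curves meeting $Y_0$ does not formally imply nefness. On the blow-up of a surface at a point, with exceptional curve $C_0\subseteq D$, the line bundle $\mathscr O(C_0)$ has non-negative degree on every curve except $C_0$, yet it is not nef.
\end{itemize}
Theorem \ref{pullbacknef} does not help, since it only transfers nefness along surjective maps.

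What is missing is information about the Hodge metric at the boundary. One way to supply it:
\begin{itemize}
\item By the norm estimates of Schmid and Cattani--Kaplan--Schmid, the Hodge metric on $\bar F^n=f_*\omega_{X/Y}$ is comparable near $D$ to a smooth metric up to factors $\prod_i(-\log|t_i|)^{\pm N}$.
\item So the induced quotient metric on $\mathscr O_P(1)$ has local weights differing from smooth ones by $O(\sum_i\log(-\log|t_i|))$.
\item Its curvature is semi-positive over $Y_0$. It therefore extends to a closed positive current on all of $P$ with zero Lelong numbers at every point, including over $D$.
\item Demailly's regularization theorem then shows $\mathscr O_P(1)$ is nef on the projective manifold $P$.
\end{itemize}
Alternatively, you can handle curves in the strata of $D$ through the limiting mixed Hodge structures along those strata, as Fujino--Fujisawa--Saito do. Either way, Step 3 needs an argument that uses the behaviour of the metric at $D$, not only on curves meeting $Y_0$.
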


In his paper \cite{11}, Kawamata established this result for projective varieties in \cite[Theorem 5]{11}. The proof is based on an analytic estimate which also works for K\"ahler domains. In fact, later in section 5, Kawamata extends this result to the case where $X$ is K\"ahler in \cite[Proof of Corollary 25]{11}. 

The key is that when the fibration is regular enough, $M_Y$ occurs as a direct summand of $f_*\omega_{X/Y}$, and Hodge theory implies that it satisfies the desired properties.

\begin{lem}\label{5.2}
    Consider an auxiliary diagram
    \[
    \begin{tikzcd}
        (X,B)\arrow[swap]{d}{f} & (V,B_V)\arrow{ld}{g}\arrow[swap,dashed]{l}{h}\\
        Y
    \end{tikzcd}
    \]
    Then we have
    \begin{enumerate}
        \item $h:(V,B_V)\to Y$ satisfies condition (1)(3) in the definition of the klt-trivial fibration, and it  induces the same discriminant divisor $B_Y$ and moduli divisor  $M_Y$ as those induced by $f:(X,B)\to Y$;
        \item The group $G$ acts on $h_*\omega_{V/Y}$. The eigensheaf corresponds to the eigenvalue $\zeta$ is 
        \[
            \mathscr L = f_*\mathscr O_X(\lceil -B+f^*B_Y+f^*M_Y\rceil)\cdot\psi.
        \]
        \item If $h:V\to Y$ is semi-stable in codimension one, then $M_Y$ is a $\mathbb Z$-divisor, $\mathscr L$ is semi-positive, and $\mathscr L = \mathscr O_Y(M_Y)\cdot\psi$. 
    \end{enumerate}
\end{lem}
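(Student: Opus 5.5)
Each part follows Ambro's proof of \cite[Lemma 5.2]{8}, adapted so that the K\"ahler hypothesis enters only through Theorem \ref{kaw}.

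For (1), first check the pullback relation. Since $h$ factors through the normalized cyclic cover $\tilde X\to X$ followed by a resolution $V\to\tilde X$, and $K_V+B_V=h^*(K_X+B)$ holds by construction, $(V,B_V)$ is klt over a dense Zariski open of $Y$ wherever $(X,B)$ is. Moreover $K_V+B_V\sim_{\mathbb Q,g}0$, since it is the pullback of a divisor $\mathbb Q$-linearly trivial over $Y$. Next compare discriminants. For a prime $P\subseteq Y$, the log canonical threshold of $g^*P$ with respect to $(V,B_V)$ equals that of $f^*P$ with respect to $(X,B)$, because log canonicity is preserved and reflected under crepant finite pullbacks; this is the same local computation as in Proposition \ref{finite-base-change}, applied with $\tau=\mathrm{id}_Y$. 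So the discriminant is again $B_Y$. Finally, pulling back the canonical bundle formula and using $h^*\phi=\psi^r$ gives
\[K_V+B_V+(\psi)=g^*(K_Y+B_Y+M_Y).\]
With the normalization $r=1$ for $\psi$, this shows that $M_Y$ is also the moduli divisor of $g$.

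For (2), compute fiberwise using $V\to\tilde X\to X$. The cover $\tilde X=\operatorname{Spec}\bigoplus_{i=0}^{r-1}\mathscr O_X(\lfloor i\cdot\tfrac1r(\phi)\rfloor)\psi^{i}$, suitably normalized, decomposes $\nu_*\mathscr O_{\tilde X}$, and hence $\nu_*\omega_{V}$ after passing to the resolution, into $G$-eigensheaves. Resolving does not change $\omega$-pushforwards, since $\tilde X$ has quotient, and hence rational, singularities. The $\zeta$-eigenpart of $h_*\omega_V$ consists of forms $\eta\psi$ where $\eta$ is a rational section of $\omega_X$ such that $\eta\psi$ is regular on $V$. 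Using $K_V+B_V=h^*(K_X+B)$, this regularity condition becomes $\operatorname{div}(\eta)+\tfrac1r(\phi)\ge\lfloor\,\cdot\,\rfloor$ of the discrepancy, that is, $\eta\in\mathscr O_X(K_X+\lceil -B-\tfrac1r(\phi)\rceil)$ after untwisting. Substituting $K_X+B+\tfrac1r(\phi)=f^*(K_Y+B_Y+M_Y)$ and tensoring by $\omega_Y^{-1}$ then gives the stated formula for $\mathscr L$.

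For (3), assume $g$ is semi-stable in codimension one. By Example \ref{ssr-discriminant}, $b_P=\max b_i$ over the components dominating $P$, computed over a big open $U$. Since $g^*P$ is reduced there, the round-up $\lceil -B+f^*B_Y+f^*M_Y\rceil$ on the components over $P$ has minimum exactly the coefficient of $M_Y$. Because $f_*\mathscr O_X(\lceil\cdot\rceil)$ is reflexive of rank one (using $\operatorname{rank} f_*\mathscr O_X(\lceil\bd A\rceil)=1$, which forces the horizontal part to contribute only constants), this yields $\mathscr L=\mathscr O_Y(\lfloor M_Y\rfloor)\psi$ on $U$, and hence everywhere by reflexivity. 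Integrality of $M_Y$ comes from the fact that $\mathscr L$ is a line bundle with $\mathscr L^{\otimes r}\cong\mathscr O(rM_Y)$, compatibly with the $\psi^r$-trivialization, which forces $\lfloor M_Y\rfloor=M_Y$ over $U$. For semi-positivity, semistability in codimension one gives unipotent local monodromies of $R^n g_{0*}\mathbb C$ around $\Sigma_Y$ at least over $U$. Theorem \ref{kaw} then shows that $g_*\omega_{V/Y}$ is semi-positive, and $\mathscr L$, being a $G$-direct summand, is semi-positive too. Since $\operatorname{codim}(Y\setminus U)\ge2$ and $Y$ is projective, nefness of $\mathscr L$ can be tested on curves inside $U$, through a big open subset.

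The hardest step is the exact bookkeeping in (3) that identifies the round-up with $M_Y$ over each $P$: this is where semistability and Example \ref{ssr-discriminant} must be used precisely. The other delicate point is making sure Theorem \ref{kaw} applies when unipotence holds only in codimension one. I would handle this by restricting to general complete-intersection curves in $U$, which exist because $Y$ is projective.
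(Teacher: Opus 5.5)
Parts (1) and (2) follow the paper's route: Proposition \ref{finite-base-change} plus crepant invariance for (1), and the eigensheaf decomposition of the normalized cyclic cover plus rationality of its singularities for (2). There is one sign slip in (2). The $\zeta$-part of $\pi_*\omega_{\tilde X}$ is $\omega_X(\lceil\tfrac1r(\phi)\rceil)\cdot\psi$, and $K_X+\lceil\tfrac1r(\phi)\rceil=f^*K_Y+\lceil -B+f^*B_Y+f^*M_Y\rceil$ because $K_X-f^*K_Y$ is integral. Your $\lceil -B-\tfrac1r(\phi)\rceil$ does not yield the formula.

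The genuine gap is in (3). You do the rounding on $X$ while invoking that $g^*P$ is reduced. That holds on $V$, not on $X$, where $f^*P$ can have multiplicities $m_E>1$. In fact, under the standard normal crossing assumptions, the relation $1-b_P=\min_E\frac{1-b_E}{m_E}$ gives $f_*\mathscr O_X(\lceil -B+f^*B_Y+f^*M_Y\rceil)=\mathscr O_Y(\lceil M_Y\rceil)$ at the generic point of every prime divisor, whether or not anything is semistable. So this computation produces $\lceil M_Y\rceil$, not $\lfloor M_Y\rfloor$, and it cannot detect integrality.

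Your integrality step is then unsupported. Nothing produces $\mathscr L^{\otimes r}\cong\mathscr O_Y(rM_Y)$: $\psi^r=h^*\phi$ with $\phi\in\mathbb C(X)$, so it trivializes nothing on $Y$. A concrete check: a relatively minimal elliptic surface with an $I_0^*$ fibre has $r=1$ and $\mathscr L=f_*\omega_{X/Y}$ a line bundle, yet $M_Y$ has coefficient in $\frac12+\mathbb Z$ at that point. So integrality must use semistability at a specific fibre component.

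The missing idea, which is what the paper does, is to compute on $V$, where $r=1$. For $a\in\mathbb C(Y)$,
\[
(g^*a\cdot\psi)+K_{V/Y}=g^*\bigl((a)+M_Y\bigr)+(-B_V+g^*B_Y),
\]
and the left side is an integral divisor. Over the big open $Y^\dagger$ where $g$ is semistable, $g^*P$ is reduced and $b_P=\max_E b_E$ by Example \ref{ssr-discriminant}. Hence $-B_V+g^*B_Y$ is effective there and has coefficient $0$ along some component $E$ over $P$. Taking $a=1$ and comparing coefficients along that $E$ gives $\operatorname{mult}_P M_Y=\operatorname{ord}_E\psi+\operatorname{mult}_E K_{V/Y}\in\mathbb Z$. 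The same identity gives $\mathscr L|_{Y^\dagger}=\mathscr O_Y(M_Y)\psi|_{Y^\dagger}$.

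Extending this to $Y$ needs $\mathscr L$ reflexive, which is not automatic for $f_*\mathscr O_X(\lceil\cdot\rceil)$. Since $(X,B)$ is klt only over an open set, the round-up can be negative along $f$-exceptional divisors. Reflexivity comes from Theorem \ref{kaw}: it makes $g_*\omega_{V/Y}$ locally free, so its direct summand $\mathscr L$ is locally free.

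Finally, your ``delicate point'' is not one. Unipotence of the local monodromy around a component of $\Sigma_Y$ is checked near a general point of that component, so semistability in codimension one suffices and Theorem \ref{kaw} applies on all of $Y$. The fallback you propose would fail anyway: nefness cannot be tested on curves in a big open subset or on general complete-intersection curves. For example, the strict transform of an ample divisor across a threefold flop is nonnegative on every curve except the flopping curve, and negative there.
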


\begin{proof}
    This is \cite[Lemma 5.2]{8}.  We will follow the notations in the proof of \cite[Lemma 5.2]{8}.
    
    (1) The map $h$ is the composition 
    \[
        (V,B_V)\dashrightarrow (\tilde X,B_{\tilde X})\to (X,B)
    \]
    where the first map $d:V\to \tilde X$ is bimeromorphic and the second morphism $\pi:\tilde X\to X$ is finite. By Proposition \ref{finite-base-change}, we have $\tilde f:(\tilde X,B_{\tilde X})\to Y$ and $f$ induce the same moduli and discriminant divisors. Since $(V,B_V)\dashrightarrow (\tilde X,B_{\tilde X})$ is a crepant map, we know $\tilde f$ and $h$ induce the same moduli and discriminant divisors.

    (2) By the projection formula, we have
    \[
        \tilde f_*\omega_{\tilde X/Y} = f_*\pi_*(\omega_{\tilde X}\otimes \pi^*f^*\omega_{Y}^\vee) = f_*(\pi_*\omega_{\tilde X}\otimes f^*\omega_Y^\vee).
    \]
    
    Since the group $G$ acts on $\tilde f_*\omega_{\tilde X/Y}$, by \cite[2.49-2.52]{KM98} and \cite[8.10.3]{6}, we have an eigensheaf decomposition
    \[
    \tilde f_*\omega_{\tilde X/Y} = \bigoplus_{i=0}^{b-1} f_*\mathscr O_X(\lceil (1-i)K_{X/Y}-iB +if^*B_Y+if^*M_Y\rceil)\cdot \psi^i
\]
where $\mathscr O_X(D)\psi^i$ is the subsheaf of  $\mathscr K_{\tilde X}$, generated by $\pi^*a\cdot \psi^i$ where $a\in \mathscr O_X(D)$.

Since $(f,B,\Sigma_X,\Sigma_Y)$  satisfies standard normal crossing assumptions, we know $K_X - f^*K_Y$ is a $\mathbb Z$-divisor and  $B-f^*(B_Y+M_Y)$ has snc support. We conclude that $\tilde X$ has rational singularities, and hence $h_*\omega_{V/Y} = \tilde f_*\omega_{\tilde X/Y}$. Then the eigensheaf decomposition implies that
\[
    \mathscr L = f_*\mathscr O_X(\lceil -B+f^*B_Y+f^*M_Y\rceil)\cdot\psi
\]
is the eigensheaf corresponds to $\zeta$.

(3) Assume $h$ is semi-stable in codimension one. By Example \ref{ssr-discriminant},  there is a big open $Y^\dagger\subseteq Y$ such that $(-B_V+h^*B_Y)|_{h^*(Y^\dagger)}$ is effective and supports no codimension one fiber of $h$. Regard $\psi$ as a meromorphic section of $h_*\mathscr O_V(K_{V/Y})$. Since  $\psi$ is an eigenfunction of $\zeta$, we have $\psi$ is a meromorphic section of $\mathscr L$. Since $\mathscr L$ has rank $1$, we deduce that $\mathscr L\subseteq \mathbb C(Y)h_*(\mathscr O_V \cdot \psi)$.

For any $a\in \mathbb C(Y)$ such that $(a) + M_Y \ge 0$, we have
\[
    (h^*a\cdot \psi)+K_{V/Y} = h^*((a)+M_Y) + (-B_V+h^*B_Y).
\]
Since $(-B_V+h^*B_Y)|_{h^*(Y^\dagger)}\ge 0$, we have $\mathscr O_Y(M_Y)|_{Y^\dagger}\cdot\psi\subseteq h_*\mathscr O_V(K_{V/Y})|_{Y^\dagger}$ and hence $\mathscr O_Y(M_Y)|_{Y^\dagger}\cdot\psi\subseteq\mathscr L|_{Y^\dagger}$. 

Conversely, if $h^*a\cdot \psi$ is a section of $\mathscr L \subseteq \mathbb C(Y)\cdot\psi$, then from (2), we have $h^*a\cdot \psi$ is a section of $h_*\mathscr O_V(K_{V/Y})$. This implies
\[
    0\le (h^*a\cdot \psi) + K_{V/Y} = h^*[(a)+M_Y] + [-B_V+h^*B_Y]. 
\]
Since $(-B_V+h^*B_Y)|_{h^*(Y^\dagger)}$  supports no codimension one fiber of $h$, we have $(a)+M_Y\geq 0$, hence $\mathscr L\subseteq \mathscr O_Y(M_Y)\cdot \psi$. We conclude that $\mathscr L|_{Y^\dagger}\subseteq \mathscr O_Y(M_Y)\cdot \psi|_{Y^\dagger}$, and hence $\mathscr L|_{Y^\dagger} = \mathscr O_Y(M_Y)\cdot \psi|_{Y^\dagger}$ on $Y^\dagger$. 

Since $Y^\dagger\subseteq Y$ is an big open subset, the reflexive hull $\mathscr L^{**} = \mathscr O_Y(M_Y)\cdot \psi$. By Theorem \ref{kaw}, we have $h_*\mathscr O_V(K_{V/Y})$ is locally free and semi-positive. By (2), since $\mathscr L$ is a direct summand of $h_*\mathscr O_V(K_{V/Y})$, we know $\mathscr L = \mathscr L^{**}$ is locally free and $\mathscr L$ is nef.
\end{proof}

Next, we will discuss base changes of auxiliary diagrams. 
Given an auxiliary diagram
    \[
    \begin{tikzcd}
        (X,B)\arrow[swap]{d}{f} & (V,B_V)\arrow{ld}{g}\arrow[swap,dashed]{l}{h}\\
        Y
    \end{tikzcd}
    \]
    and $\gamma:Y'\to Y$ be a generically finite morphism from a smooth projective variety $Y'$ such that there exists a reduced divisor $\Sigma_{Y'}$ supporting $\gamma^{-1}(\Sigma_Y)$ and the locus where $\gamma$ is not \'etale, there is an induced auxiliary diagram $(V',B_{V'})\dashrightarrow (X',B')\to Y'$ such that the following diagram is commutative
\[\begin{tikzcd}
	& {V} && {V'} \\
	{X} && {X'} \\
	& {Y} && {Y'}
	\arrow["{g}"', from=1-2, to=2-1, dashed]
	\arrow["{h}" pos=0.3, from=1-2, to=3-2]
	\arrow[from=1-4, to=1-2]
	\arrow["{g'}"', from=1-4, to=2-3, dashed]
	\arrow["{h'}", from=1-4, to=3-4]
	\arrow["{f}", from=2-1, to=3-2]
	\arrow[from=2-3, to=2-1]
	\arrow["{f'}", from=2-3, to=3-4]
	\arrow["\tau"', from=3-4, to=3-2]
\end{tikzcd}\]

\begin{lem}\label{semi-stable-triangle}
    Given an auxiliary diagram
    \[
    \begin{tikzcd}
        (X,B)\arrow[swap]{d}{f} & (V,B_V)\arrow{ld}{g}\arrow[swap,dashed]{l}{h}\\
        Y
    \end{tikzcd}
    \]
    There exists a finite Galois covering $\tau:Y'\to Y$ from a smooth projective variety $Y'$ such that 
    \begin{enumerate}
        \item there exists a reduced divisor $\Sigma_{Y'}$ supporting $\gamma^{-1}(\Sigma_Y)$ and the locus where $\gamma$ is not \'etale;
        \item The induced morphism $h:V'\to Y'$ is semi-stable in codimension one.
    \end{enumerate}
\end{lem}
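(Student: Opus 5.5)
The plan is to combine Theorem \ref{SSR} (semi-stable reduction in codimension one) with Kawamata's covering trick on the projective base $Y$. By the standard normal crossing assumptions satisfied by the auxiliary diagram (Lemma \ref{auxillary-base-change}), $h:V\to Y$ is a surjective morphism from a compact K\"ahler manifold to a smooth projective variety. It is smooth away from the snc divisor $\Sigma_Y$, and $\Sigma_V\supseteq (h^{-1}\Sigma_Y)_{red}$ is snc. Possibly after blowing up $V$ along centers lying over $\Sigma_Y$, we may assume $h^{-1}(\Sigma_Y)$ is itself an snc divisor, so that $(h,\Sigma_Y)$ satisfies the hypotheses of Theorem \ref{SSR}. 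Such blow-ups are crepant modifications of $(V,B_V)$. By Lemma \ref{5.2}(1) they do not change $B_Y$ or $M_Y$, and they keep us inside the class of auxiliary diagrams. Theorem \ref{SSR} then provides an integer $N>0$ depending only on $(h,\Sigma_Y)$.

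Next, we apply Kawamata's covering construction (\cite[Theorem 17]{11}, \cite[Theorem 4.1, Remark 4.2]{8}) to the snc pair $(Y,\Sigma_Y)$ with this $N$. This yields a finite Galois cover $\tau:Y'\to Y$ from a smooth projective $Y'$ with the following properties. The ramification index of $\tau$ over every component of $\Sigma_Y$ is divisible by $N$. The map $\tau$ is \'etale away from an snc divisor $B=\Sigma_Y+H$, where $H$ is a sum of general hyperplane sections. Finally, $\tau^{-1}(B)$ is snc. Since the components of $H$ are general, Bertini's theorem makes $h^{-1}(H)$ smooth and transverse to $\Sigma_V$, so $h^{-1}B$ is snc. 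This is the only place where genericity on the analytic side is needed, and it comes for free because $h^{-1}$ of a general member of a base-point-free linear system on $Y$ is smooth by the analytic Bertini–Sard argument. Setting $\Sigma_{Y'}=\tau^{-1}(B)_{red}$ gives a reduced divisor supporting $\tau^{-1}(\Sigma_Y)$ and the non-\'etale locus of $\tau$, which is condition (1).

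Theorem \ref{SSR} now applies to $h$ and $\tau$. It produces $V'\to V\times_YY'$ such that $h':V'\to Y'$ is semi-stable in codimension one, $V'$ is smooth, the modification is projective and an isomorphism over $Y'-\tau^{-1}\Sigma_Y$, and the preimage of $\Sigma_Y$ is snc. We then define $B_{V'}$ as the crepant pullback of $B_V$. The remaining work is to verify that $(V',B_{V'})\dashrightarrow(X',B')\to Y'$ is the induced auxiliary diagram. Here $X'$ is a resolution of the main component of $X\times_YY'$, and the map $V'\dashrightarrow X'$ is induced from $V\dashrightarrow X$, which is generically finite of degree $r$. The relation $\psi'^r=\phi'$ persists on pulling back $\psi,\phi$, so condition (2) of Lemma \ref{auxillary-base-change} holds. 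For standard normal crossing condition (4) we may need further blow-ups over $\Sigma_{Y'}$, and we arrange these away from a big open subset so that semi-stability in codimension one is preserved.

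The main obstacle is this compatibility step. Any extra blow-ups of $V'$ needed to restore the standard normal crossing assumptions for the induced diagram must not destroy semi-stability in codimension one. We handle it by blowing up only over the codimension $\ge2$ locus of $Y'$ (singular strata of $\Sigma_{Y'}$) or along strata that are already toroidal. Semi-stability is required only over a big open $U\subseteq Y'$, so these modifications are harmless.
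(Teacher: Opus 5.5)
Your proposal is correct and follows the same route as the paper: take $N$ from Theorem~\ref{SSR}, use Kawamata's covering trick on the projective base to get a Galois cover $\tau$ whose ramification along $\Sigma_Y$ is divisible by $N$ and for which $\Sigma_{Y'}$ is snc, and then apply Theorem~\ref{SSR} to $V\to Y$. Your extra checks fill in details the paper leaves implicit, namely the Bertini argument showing $h^{-1}B$ is snc and the compatibility with the auxiliary diagram; just confine any further blow-ups to centers over the codimension $\ge 2$ locus of $Y'$, because blowing up a stratum of a reduced snc fiber over a divisor of $Y'$ (locally $t=xy$, blown up along $x=y=0$) creates a fiber component of multiplicity two.
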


\begin{proof}
    The key is that our base $Y$ is projective so that Kawamata's covering trick works. The following is exactly the same as \cite[Proposition 5.4]{8}.
    
    Let $N$ be an integer satisfying the condition in Theorem \ref{SSR}. Since $Y$ is a projective variety, by Kawamata's covering trick (\cite[Theorem 4.1]{8}), there exists a Galois covering $\tau:Y'\to Y$ such that $\tau^*(\Sigma_Y)$ is divided by $N$ and there exists a snc divisor $\Sigma_{Y'}$ containing $\tau^{-1}(\Sigma_Y)$ and the non-\'etale locus. By Theorem \ref{SSR}, the main component of the base change $V\times_YY'$ admits a  resolution that is semistable in codimension one.
\end{proof}

\begin{lem}\label{Ambro-5.5}
    Under the same assumptions as in the previous lemma. Let $\gamma:Y'\to Y$ be a generically finite morphism from a smooth projective variety $Y'$. Assume there exists a snc divisor $\Sigma_{Y'}$ on $Y'$ that contains $\gamma^{-1}(\Sigma_Y)$ and the non-\'etale locus of $\gamma$. Let $M_{Y'}$ be the moduli part of $(X',B')\to Y'$. Then we have $\gamma^*M_Y = M_{Y'}$.
\end{lem}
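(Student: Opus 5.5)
We follow the strategy of \cite[Lemma 5.5]{8}, using semistable reduction (Theorem \ref{SSR}) and the eigensheaf description of the moduli part (Lemma \ref{5.2}) to compare $M_Y$ and $M_{Y'}$. The base $Y$ is projective throughout, so the covering tricks remain available.

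\textbf{Step 1: reduction to the semistable case.} By Lemma \ref{semi-stable-triangle}, choose a finite Galois covering $\tau: \tilde Y\to Y$ for which the induced $\tilde h: \tilde V\to\tilde Y$ is semistable in codimension one. Then choose a smooth projective $\tilde Y'$ dominating $\tilde Y\times_Y Y'$ (its main component), with morphisms $\tilde\gamma:\tilde Y'\to \tilde Y$ and $\tau':\tilde Y'\to Y'$. We may arrange, by a further covering if necessary, that $\tilde h':\tilde V'\to \tilde Y'$ is also semistable in codimension one. Here $\tilde\gamma^{-1}(\Sigma_{\tilde Y})$ and the non-étale loci are contained in an snc divisor.

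Proposition \ref{finite-base-change} applies to the finite covering $\tau$: it shows $\tau^*(K_Y+B_Y)=K_{\tilde Y}+B_{\tilde Y}$. Since $K_X+B+\frac1r(\phi)=f^*(K_Y+B_Y+M_Y)$ pulls back to the corresponding relation upstairs, we get $\tau^*M_Y=M_{\tilde Y}$. The same argument gives $\tau'^*M_{Y'}=M_{\tilde Y'}$, with the generically finite $\tau'$ handled by Stein factorisation into a birational part followed by a finite part. Because pullbacks compose, it suffices to prove $\tilde\gamma^*M_{\tilde Y}=M_{\tilde Y'}$. The identity $\tau'^*\gamma^*=\tilde\gamma^*\tau^*$ holds on Cartier divisors, and $\tau'^*$ is injective on $\mathbb Q$-divisors of this kind.

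\textbf{Step 2: the semistable case.} By Lemma \ref{5.2}(3), $M_{\tilde Y}$ is an integral divisor and $\mathscr O_{\tilde Y}(M_{\tilde Y})\cdot\psi$ is the $\zeta$-eigensheaf $\mathscr L$ of $\tilde h_*\omega_{\tilde V/\tilde Y}$. The same holds on $\tilde Y'$, with the same $\psi$ pulled back. We compare the two direct images. Since $\tilde h$ is semistable in codimension one and smooth outside an snc divisor, Theorem \ref{kaw} shows the local monodromies are unipotent. In this situation $\tilde h_*\omega_{\tilde V/\tilde Y}$ is the canonical (Deligne) extension of the lowest Hodge piece. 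Kawamata's base change property \cite{11} for such extensions gives $\tilde\gamma^*\tilde h_*\omega_{\tilde V/\tilde Y}\cong \tilde h'_*\omega_{\tilde V'/\tilde Y'}$ compatibly with the $G$-action. This argument is purely Hodge-theoretic on the base and works for Kähler fibers. Taking $\zeta$-eigensheaves, we obtain $\tilde\gamma^*\mathscr O(M_{\tilde Y})\cdot\psi=\mathscr O(M_{\tilde Y'})\cdot\psi$ as subsheaves of $\mathscr K\cdot\psi$, hence $\tilde\gamma^*M_{\tilde Y}=M_{\tilde Y'}$ as divisors, not merely as linear equivalence classes.

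\textbf{Main obstacle.} The main difficulty is the base change identity in Step 2. It requires the canonical extension and the unipotent monodromy input for a non-algebraic total space, and it requires care over the part of $\Sigma_{\tilde Y'}$ that is not mapped into $\Sigma_{\tilde Y}$. Over that part, $\tilde\gamma$ is merely non-étale while the family is smooth, so the identity reduces to flat base change. If the direct Hodge argument proves problematic, I would check the equality of divisors codimension-one point by point instead. At prime divisors of $\tilde Y'$ lying over $\Sigma_{\tilde Y}$, semistability of both families in codimension one reduces the comparison to an explicit computation in the toroidal local models of Lemma \ref{cover-strict-toroidal}. Both sides are reflexive of rank one, so agreement in codimension one suffices.
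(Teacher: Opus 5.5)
Your overall architecture is the paper's. You reduce via Kawamata covers to the case where both families are semistable in codimension one, then compare $\zeta$-eigensheaves of the canonical extensions. Your Step 2 is essentially the paper's Step 1, and comparing $\mathscr O(M)\cdot\psi$ as subsheaves of $\mathscr K\cdot\psi$ is the clean way to get equality of divisors rather than of classes.

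The gap is in your Step 1, at the claim $\tau'^*M_{Y'}=M_{\tilde Y'}$. You take $\tilde Y'$ to be a resolution of $\tilde Y\times_Y Y'$, so $\tau'$ is only generically finite. In its Stein factorisation $\tilde Y'\xrightarrow{\beta}Y''\xrightarrow{\tau''}Y'$, Proposition \ref{finite-base-change} handles only the finite part $\tau''$. For the birational part, $\beta^*M_{Y''}=M_{\tilde Y'}$ is equivalent to $\beta^*(K_{Y''}+B_{Y''})=K_{\tilde Y'}+B_{\tilde Y'}$, i.e.\ to descent of $\bd M$ to the possibly singular $Y''$. That is exactly the birational case of the statement you are proving. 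Indeed, the paper later applies this lemma with a birational $\gamma$ precisely to obtain descent in Theorem \ref{moduli_b-nef}. So ``the same argument'' does not give it, and the reduction is circular as written.

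The repair is easy, in either of two ways.

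\emph{Make $\tau'$ finite (the paper's route).} By \cite[Remark 4.2]{8}, i.e.\ Kawamata's covering trick applied on the projective $Y'$, there is a finite cover $\tau':\overline Y'\to Y'$ with $\overline Y'$ smooth, compatible with $\overline Y\to Y$. Further Kawamata covers to achieve semistability keep $\tau'$ finite, so only the finite case of Proposition \ref{finite-base-change} is ever used.

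\emph{Keep your $\tilde Y'$ but push forward instead of pulling back.}
\begin{enumerate}
\item Step 2 together with $\tau^*M_Y=M_{\tilde Y}$ gives $M_{\tilde Y'}=\tau'^*\gamma^*M_Y$.
\item Both $\beta^*(K+B+M)_{Y''}$ and $(K+B+M)_{\tilde Y'}$ pull back to $K+B+\frac1r(\phi)$ upstairs, so $\beta^*(K+B+M)_{Y''}=(K+B+M)_{\tilde Y'}$.
\item Since $\beta_*$ preserves $K$ and the discriminant, this yields $\beta_*M_{\tilde Y'}=M_{Y''}$, hence $M_{Y''}=\tau''^*\gamma^*M_Y$.
\item Proposition \ref{finite-base-change} applied to the finite $\tau''$ gives $M_{Y''}=\tau''^*M_{Y'}$.
\item $\tau''^*$ is injective on Weil divisors, so $\gamma^*M_Y=M_{Y'}$.
\end{enumerate}

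A minor point: unipotence of the local monodromies comes from semistability in codimension one (reduced normal crossing fibres over the generic points of $\Sigma$). Theorem \ref{kaw} assumes unipotence; it does not prove it.
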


\begin{proof}
    We will follow Ambro's argument in \cite[Proposition 5.5]{8}.

    \textbf{Step 1:} Assume that both $V\to Y$ and $V'\to Y$ are semistable in codimension one. Then we have $M_Y$ and $M_{Y'}$ are $\mathbb Z$-divisors. Since $h:V'\to V$ is semistable in codimension one, by naturality of the canonical extension, we have
    \[
        h'_*\omega_{V'/Y'} = \gamma^*h_*\omega_{V/Y}.
    \]
    Since by Lemma \ref{5.2}, both $M_Y$ and $M_{Y'}$ are $\zeta$-eigensheaves, we deduce that $\gamma^*\mathscr O_Y(M_Y) =\mathscr O_{Y'} (M_{Y'})$. Since the principal divisors $(\phi)$ and $\gamma^*(\phi)$ are fixed, we deduce that $\gamma^*M_Y-M_{Y'}$ is $\gamma$-exceptional, and hence $\gamma^*M_{Y} = M_{Y'}$.

    \textbf{Step 2:} In general, for a general generically finite base change $\gamma$, we can find a semi-stable model $\gamma'$ of it. Let $\tau:\overline Y\to Y$ be a semi-stable base change from Lemma \ref{semi-stable-triangle}. Since $\overline{Y}\to Y$ is a projective morphism between projective varieties, we can apply \cite[Remark 4.2]{8} to obtain a commutative diagram
    \[
        \begin{tikzcd}
            \overline{Y}\arrow{d}{\tau} & \overline{Y}'\arrow[swap]{l}{\gamma'}\arrow{d}{\tau'}\\
            Y & Y'\arrow{l}{\gamma}
        \end{tikzcd}
    \]
    such that 
    \begin{enumerate}
        \item $\tau'$ is a finite covering and $\gamma'$ is a projective morphism;
        \item $\overline{Y}'$ is smooth projective;
        \item There exists a snc divisor $\Sigma_{Y'}$ such that $\tau'$ is \'etale away from $\Sigma_{Y'}$ and $\tau'^{-1}(\Sigma_Y)_{red}$ is snc, and $\gamma^{-1}(\Sigma_{Y})\subseteq \Sigma_{Y'}$.
    \end{enumerate}
    From the construction, $\overline V\to \overline Y$ is semi-stable in codimension one. By Theorem \ref{SSR} and Kawamata's covering trick, there is a finite covering of $\overline{Y}'$ such that the induced fibration is semi-stable in codimension one. We will replace $\overline{Y}'$ by this finite base change so that $\overline{V}'\to \overline{Y}'$ is semi-stable in codimension one.  We would like to emphasize that this base change construction takes place entirely on the projective base, so that the hyperplane section trick works without difficulty.
    
    By Step 1, we deduce that $M_{\overline{Y}'} = \gamma'^*(M_{\overline{Y}})$. Since $\tau,\tau'$ are finite coverings, we have $\tau^*(M_{Y}) = M_{\overline{Y}}$ and $\tau'^*(M_{Y'}) = M_{\overline Y'}$. This implies that  $\gamma^*M_{Y} = M_{Y'}$.
\end{proof}

\begin{proof}[Proof of Theorem \ref{moduli_b-nef}]
    From the definition of klt trivial fibration, we write
    \[
        K_X+B+\frac{1}{b}(\phi) = f^*(K_Y+B_Y+M_Y)
    \]
    where $\phi\in \mathbb C(X)$ and $b$ is the minimal integer such that the equation holds.

    By Lemma \ref{standard-normal-crossing} and Lemma \ref{auxillary-base-change}, after making a base change $Y'\to Y$, there exists an auxiliary diagram
\[
    \begin{tikzcd}
        (X',B')\arrow[swap]{d}{f'} & (V',B_{V'})\arrow{ld}{g'}\arrow[swap,dashed]{l}{h'}\\
        Y'
    \end{tikzcd}
    \]
    such that $X'$ is still  K\"ahler (See Remark \ref{kaehlerness-perserving}).
    
We claim that $\bd M$ and $\bd K+\bd B$ descends to $Y'$. In fact, for a further (projective) birational base change $\nu: Y''\to Y'$, let $\tilde{Y}''\to (Y'',\nu^{-1}\Sigma_{Y'})$ be a log resolution:
\[
    \begin{tikzcd}
        Y'' \arrow{d}[swap]{\nu} & \tilde{Y}''\arrow{ld}{\nu'}\arrow{l}\\
        Y'
    \end{tikzcd}
\]
Then, by Lemma \ref{Ambro-5.5}, we have $\nu'^*(K_{Y'}+\bd B_{Y'}) = K_{\tilde Y''} + \bd B_{\tilde Y''}$ and $\nu'^*\bd M_{Y'} = \bd M_{\tilde Y''}$. Since all morphisms in the triangle are birational and $\nu'$ factors through $\nu$, we conclude that $\nu^*(K_{Y'}+\bd B_{Y'}) = K_{Y''}+\bd B_{Y''}$.

Next, we will show that $\bd M_{Y'}$ is a nef $\mathbb Q$-Cartier divisor. In fact, by Theorem \ref{SSR}, there exists a finite Galois base change $\tau: \overline{Y}'\to Y'$ which is \'etale away from an snc divisor that contains $\Sigma_Y$. We have an induced auxillary triangle:
\[\begin{tikzcd}
	& {V'} && {\overline{V}'} \\
	{X'} && {\overline{X}'} \\
	& {Y'} && {\overline{Y}'}
	\arrow["{g'}"', from=1-2, to=2-1, dashed]
	\arrow["{h'}" pos=0.3, from=1-2, to=3-2]
	\arrow[from=1-4, to=1-2]
	\arrow["{\overline{g}'}"', from=1-4, to=2-3, dashed]
	\arrow["{\overline h'}", from=1-4, to=3-4]
	\arrow["{f'}", from=2-1, to=3-2]
	\arrow[from=2-3, to=2-1]
	\arrow["{\overline{f}'}", from=2-3, to=3-4]
	\arrow["\tau"', from=3-4, to=3-2]
\end{tikzcd}\]
in which, blowing up $\overline{V}'$ if needed, $\overline{V}'\to \overline{Y}'$ is semi-stable in codimension 1. By Lemma \ref{5.2}, we have $\bd M_{\overline Y'}$ is a nef $\mathbb Z$-divisor. Since $\tau^*\bd M_{Y'} = \bd M_{\overline Y'}$, it follows that $\bd M_{Y'}$ is nef as well.
\end{proof}

As a corollary, we know that assumption (4) in Theorem \ref{thmA} holds when $a(X) =1$:

\begin{prop}\label{curvebase}
    Let $f:(X,B)\to Y$ be a klt-trivial fibration from a K\"ahler pair to a curve $Y$. Then the moduli $\mathbb Q$-divisor $M_Y$ is semiample. 
\end{prop}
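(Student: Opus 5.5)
Since $Y$ is a curve, every projective birational model $Y'\to Y$ of a smooth projective curve is an isomorphism (after normalization). So the b-divisor $\bd M$ automatically descends to $Y$, and $M_Y$ is an honest $\mathbb Q$-divisor on the smooth projective curve $Y$. On a curve, a nef $\mathbb Q$-divisor of positive degree is ample, hence semiample. It therefore suffices to show two things: $M_Y$ is nef, and if $\deg M_Y=0$ then $M_Y\sim_{\mathbb Q}0$.

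\textbf{Step 1: nefness.} By Theorem \ref{moduli_b-nef}, $\bd M$ is b-nef. Since $\bd M$ descends to $Y$, $M_Y$ is nef, i.e. $\deg M_Y\ge 0$. If $\deg M_Y>0$ we are done. For the reduction below, I take $Y$ smooth; replacing $Y$ by its normalization changes nothing by the descent remark.

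\textbf{Step 2: torsion in the degree zero case.} Assume $\deg M_Y=0$. I would pass to a semistable model. By Lemma \ref{standard-normal-crossing} and Lemma \ref{auxillary-base-change}, we may build an auxiliary diagram over $Y$; no birational base change is needed because $Y$ is a curve. Then Lemma \ref{semi-stable-triangle} gives a finite Galois cover $\tau:Y'\to Y$ with $h':V'\to Y'$ semistable in codimension one, which on a curve means semistable.

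By Lemma \ref{Ambro-5.5}, $\tau^*M_Y=M_{Y'}$. By Lemma \ref{5.2}(3), $M_{Y'}$ is an integral divisor and $\mathscr O_{Y'}(M_{Y'})$ is a direct summand of $h'_*\omega_{V'/Y'}$ of degree $0$. So it suffices to show that a degree zero direct summand line bundle of $h'_*\omega_{V'/Y'}$ is torsion. Once that holds, $M_{Y'}\sim_{\mathbb Q}0$. Then $M_Y=\frac{1}{\deg\tau}\tau_*\tau^*M_Y\sim_{\mathbb Q}0$, using the norm map of the finite cover, so $M_Y$ is semiample.

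\textbf{Main obstacle: the torsion claim.} I expect this to be the hard part. My first approach is Kawamata's analysis in \cite{11}, which the paper says extends to Kähler total spaces. The Hodge metric on the variation of Hodge structure underlying $\mathscr L=\mathscr O_{Y'}(M_{Y'})\cdot\psi$ has semi-positive curvature. Degree zero forces the curvature to vanish, so the corresponding sub-VHS is flat with a unitary flat structure. The flat sections are defined over the lattice $R^nh'_*\mathbb Z$, since the eigen-decomposition is defined over $\mathbb Q(\zeta)$. Then Deligne's theorem, that a unitary local system preserving a lattice has finite monodromy, shows that $\mathscr L$ is torsion.

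As a cleaner alternative, I would look for a citable result on the b-semiampleness of the moduli part of klt-trivial fibrations of relative dimension one. The cited result of Loginov--Shramov covers part of this, but it is stated for relative dimension one, not for a curve base. So I would rely on the Hodge-theoretic argument: curvature vanishing implies a flat unitary integral structure, which implies finite monodromy.
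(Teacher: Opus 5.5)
Your reduction is sound, and it is presumably the route behind Ambro's curve-base result. The paper's own proof consists only of citing that result and asserting that it carries over. On a curve $\bd M$ descends trivially, and Theorem \ref{moduli_b-nef} gives $\deg M_Y\ge 0$. Positive degree means ample. In degree zero, Lemmas \ref{semi-stable-triangle}, \ref{Ambro-5.5}, \ref{5.2} and the norm map reduce everything to showing that the flat line bundle $\mathscr L$ on $Y'$ has finite monodromy.

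That last step carries all the content, and your justification for it does not work. Over the smooth locus $Y'_0$, put $H=R^nh'_*\mathbb C$ and let $H_\zeta$ be its $\zeta$-eigen-local system, so that $\mathscr L=F^n\cap H_\zeta$. That the eigen-decomposition is defined over $\mathbb Q(\zeta)$ only gives $H_\zeta$ an arithmetic structure. But $H_\zeta$ generally has rank bigger than one, with Hodge pieces on which the polarization has opposite signs, so $H_\zeta$ is not unitary. On the other hand, $\mathscr L$ is unitary but is merely a rank one flat sub-local system of $H_\zeta$, and nothing makes it defined over $\mathbb Q$ or lattice-preserving. Already for an isotrivial K3 fibration (where $r=1$), $\mathscr L=H^{2,0}$ is a flat line that is not even defined over $\mathbb R$, since its conjugate is $H^{0,2}$. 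So ``unitary and preserving a lattice'' applies to neither object. This is not a matter of phrasing. Catanese and Dettweiler showed that flat unitary direct summands of $h_*\omega_{V/Y}$ of higher rank can have infinite monodromy, so an argument specific to rank one is required.

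Here is one way to close the gap. Let $\chi\colon\pi_1(Y'_0)\to U(1)$ be the monodromy character of $\mathscr L$.
\begin{enumerate}
\item Each $\chi(\gamma)$ is an eigenvalue of an integral monodromy matrix, hence an algebraic integer.
\item For $\sigma\in\operatorname{Aut}(\mathbb C/\mathbb Q)$, apply $\sigma$ to the coefficients of $H=(R^nh'_*\mathbb Q)\otimes\mathbb C$. This sends $\mathscr L$ to a rank one sub-local system of $H$ with character $\sigma\circ\chi$.
\item By Deligne's semisimplicity theorem, this sub-local system is a direct summand. By Deligne's result that each irreducible summand of a polarizable $\mathbb C$-VHS over a quasi-projective base carries a polarizable VHS structure, a rank one summand is pure of a single Hodge type. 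Its flat polarization is therefore definite, so $\sigma\circ\chi$ is unitary.
\item Hence every Galois conjugate of $\chi(\gamma)$ has absolute value one, and Kronecker's theorem makes $\chi(\gamma)$ a root of unity.
\item Since $\pi_1(Y'_0)$ is finitely generated, $\chi$ has finite image, so $\mathscr L$ is torsion.
\end{enumerate}
In the K\"ahler setting, $H$ is polarized only by the flat real class of a K\"ahler form on $V'$; this suffices for Deligne's results. Separately, your step ``degree zero forces zero curvature'' needs the fact that, with unipotent local monodromy, the Hodge metric has logarithmic growth. Its Chern form is then integrable and computes the degree of the canonical extension $h'_*\omega_{V'/Y'}$.
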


\begin{proof}
    This is the K\"ahler version of \cite[Theorem 4.5, Theorem 0.1]{8}. The proof still works in this set-up.
\end{proof}

\section{The Reduction Argument}

 We need the following result on cohomology and base change:

\begin{lem}
    Let $f:X\to Y$ be a proper morphism between analytic varieties and $\mathscr F$ be a coherent sheaf on $X$. Then, there exists a dense open $U\subseteq Y$ such that
    \begin{enumerate}
        \item $R^if_*\mathscr F$ is locally free on $U$ of rank equal to $\operatorname{rank}R^if_*\mathscr F$;
        \item $R^if_*\mathscr F\otimes \mathbb C(y)\to H^i(X_y,\mathscr F|_{X_y})$ is an isomorphism for $y\in U$.
    \end{enumerate}
\end{lem}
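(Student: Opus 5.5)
The plan is to combine generic flatness with Grauert's base change theorems. All the needed ingredients are standard in the analytic category.

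\textbf{Step 1: generic flatness.} By Frisch's generic flatness theorem (analytic version), there is a proper analytic subset $Z_0\subsetneq Y$ such that $\mathscr F$ is $f$-flat over $Y-Z_0$. Here we use that $Y$ is reduced and irreducible and $f$ is proper. Replacing $Y$ by $Y-Z_0$, we may assume $\mathscr F$ is flat over $Y$.

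\textbf{Step 2: local freeness.} By Grauert's direct image theorem, each $R^if_*\mathscr F$ is coherent on $Y$. Only finitely many $i$ matter, namely $0\le i\le \dim X$. A coherent sheaf on a reduced irreducible space is locally free of its generic rank off a proper analytic subset $Z_i$ (the degeneracy locus of a local presentation). Removing $Z_0\cup Z_1\cup\dots\cup Z_{\dim X}$, which is a proper analytic subset, leaves a dense Zariski open $U$ on which every $R^if_*\mathscr F$ is locally free of rank $\operatorname{rank}R^if_*\mathscr F$. This gives (1).

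\textbf{Step 3: base change.} For (2), use Grauert's base change theorem for flat proper families over a reduced base: if $\mathscr F$ is flat over $U$ and $R^if_*\mathscr F$ and $R^{i+1}f_*\mathscr F$ are locally free near $y$, then $R^if_*\mathscr F\otimes\mathbb C(y)\to H^i(X_y,\mathscr F|_{X_y})$ is an isomorphism.

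Alternatively, argue via upper semicontinuity. The function $y\mapsto h^i(X_y,\mathscr F_y)$ is upper semicontinuous, so it is constant equal to its minimum off a proper analytic subset, which we also remove. Once $U$ is reduced and all the $h^i$ are locally constant, Grauert's theorem gives that each $R^if_*\mathscr F$ is locally free and that the base change maps are isomorphisms for all $i$ simultaneously. Again this requires intersecting only finitely many dense Zariski opens.

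\textbf{Main obstacle.} The only delicate point is making sure the exceptional loci are analytic and proper. Otherwise $U$ could fail to be Zariski open and dense in the irreducible space $Y$. This is guaranteed for the flatness locus by Frisch's theorem, for the degeneracy loci by coherence, and for the jump loci by Grauert's semicontinuity theorem, which produces analytic jump loci.
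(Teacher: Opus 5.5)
Your proof is correct, and the ``alternatively'' version of your Step 3 is exactly the paper's argument. The paper uses generic flatness, then upper semicontinuity of $y\mapsto h^i(X_y,\mathscr F|_{X_y})$ to get a Zariski open set on which it is constant, then Grauert's theorem to get local freeness and base change at once. If anything, you are more careful than the paper. The paper writes that $X_V\to V$ is flat, whereas what is needed is flatness of $\mathscr F$. It also does not say that the jump loci are analytic, which is what makes $U$ Zariski open.

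One correction to your primary route: the base-change criterion as you state it is false. Local freeness of $R^if_*\mathscr F$ and $R^{i+1}f_*\mathscr F$ near $y$ does not by itself give base change in degree $i$. Take the Poincar\'e bundle on $A\times\hat A\to\hat A$ with $A$ an abelian surface. There $R^0$ and $R^1$ vanish, yet $H^0(A,\mathscr O_A)\neq 0$ over the origin.

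The correct statement requires $R^jf_*\mathscr F$ to be locally free near $y$ for all $j>i$. It is proved by descending induction with the cohomology and base change theorem, starting from a degree above the fibre dimension, where surjectivity is automatic. Your Step 2 already makes every $R^jf_*\mathscr F$ locally free on $U$, so your first route also goes through once the criterion is stated this way.
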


\begin{proof}
    By generic flatness, there is a Zariski open $V\subseteq Y$ such that $X_{V}\to V$ is flat. As a result, the function $y\mapsto h^i(X_{y},\mathscr F_{X_{y}})$ is upper semicontinuous on $V$. Since every $\mathbb N$-valued upper semicontinuous function must achieve its minimal in a nonempty open subset, there exists a Zariski open $U$ of $V$ such that $y\mapsto h^i(X_{y},\mathscr F_{X_{y}})$ is a constant function (Cf. \cite[Théorème 3]{MR422670}).
    By Grauert's theorem, we know (1)(2) hold for $y\in U$.
\end{proof}

We first show that, since $\mathbb C(X) = \mathbb C(V)$,  divisors on $X$ should be rigid along the vertical direction.

\begin{lem}\label{Kvertical}
    Let $(X,\dt)$ be a K\"ahler klt pair such that $K_X+\dt$ is nef and $a(X)\neq 0$. Let $X\dashrightarrow V$ be the algebraic reduction map of $X$ and $M$ be the normalization of its graph.  Consider the diagram
    \[
        \begin{tikzcd}
            M\arrow{r}{f}\arrow[swap]{d}{\mu}&V\\
            X
        \end{tikzcd}
    \]
    Assume

    \begin{enumerate}
        \item The algebraic reduction map $X\dashrightarrow V$ is almost holomorphic; and
        \item Log abundance for K\"ahler klt pairs holds in dimension $\operatorname{dim}(X)-a(X)$.
    \end{enumerate}

    Then, for a very general fiber $W$ of $f$, we have $\kappa(W,K_W+\dt_W) = 0$ where $K_W+\dt_W = \mu^*(K_X+\dt)|_W$. In fact, there is a vertical $\mathbb Q$-divisor $E$ (not necessarily effective) on $M$ such that $K_M + \dt_M\sim_\mathbb Q E$.
\end{lem}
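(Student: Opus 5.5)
The plan is to first get semiampleness on the fibers, then use the fact that $\mathbb C(X)=\mathbb C(V)$ to show the Iitaka fibration of a general fiber must be trivial, and finally spread this fiberwise triviality into a global vertical divisor.

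\textbf{Step 1: abundance on the fibers.} By assumption (1), the algebraic reduction map is almost holomorphic. So a general fiber $W$ of $f$ lies over the locus where $\mu$ is an isomorphism, $W$ is a fiber of $X\dashrightarrow V$ contained in the domain, and $(W,\dt_W)$ is a compact K\"ahler klt pair of dimension $\dim X - a(X)$. It is klt by adjunction, since $W$ is a general fiber. Moreover $K_W+\dt_W=(K_X+\dt)|_W$ is nef, because restriction of a nef class to a subvariety is nef (Theorem \ref{pullbacknef} applied to the inclusion). Assumption (2) then gives that $K_W+\dt_W$ is semiample. Let $\phi_W\colon W\to Z_W$ be its Iitaka fibration, with $\dim Z_W=\kappa(W,K_W+\dt_W)$.

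\textbf{Step 2: $\kappa=0$ on very general fibers, which I expect to be the main obstacle.} Suppose $\kappa(W,K_W+\dt_W)=k>0$ for a very general fiber. Fix $m$ divisible enough that $m(K_X+\dt)$ is Cartier and $m(K_W+\dt_W)$ is base point free on the general fiber. By the cohomology and base change lemma above, applied to $\mathscr F=\mu^*\mathscr O_X(m(K_X+\dt))$, the sheaf $f_*\mathscr F$ is locally free on a dense open $U\subseteq V$ and its fibers are $H^0(W,m(K_W+\dt_W))$. The relative evaluation map therefore defines a meromorphic map $M\dashrightarrow \mathbb P_V(f_*\mathscr F)$ over $V$ which restricts to $\phi_W$ on each fiber. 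Its image $Z$ is a compact analytic variety bimeromorphic to a subvariety of a projective bundle over the projective variety $V$, since $f_*\mathscr F$ is coherent on $V$ and hence algebraic by GAGA. Thus $Z$ is Moishezon, and it has dimension $a(X)+k>a(X)$. Pulling back meromorphic functions from $Z$ would give $a(M)\ge a(X)+k$, which contradicts $a(M)=a(X)$. The delicate points here are two. First, the image of $M$ must be an analytic subvariety that dominates $V$ with fibers of dimension $k$. Second, the very general fibers must be handled properly, by choosing $m$ uniformly: the set of $m$ is countable, so the bad loci form a countable union of proper analytic subsets.

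\textbf{Step 3: the vertical divisor.} Since $\kappa(W,K_W+\dt_W)=0$ and the divisor is semiample, $K_W+\dt_W\sim_\mathbb Q 0$ on a very general fiber. Hence for $m$ divisible enough, $f_*\mathscr F$ has rank one. Take a nonzero meromorphic section $s$ of the rank one sheaf $f_*\mathscr F$; this is possible since $V$ is projective and the sheaf is coherent of rank one. Regard $s$ as a meromorphic section of $\mathscr O_M(m(K_M+\dt_M))$, using $\mu^*(K_X+\dt)=K_M+\dt_M$ for the crepant structure. Its divisor $D$ satisfies $D\sim m(K_M+\dt_M)$. On a general fiber, $s|_W$ is a nonzero section of the trivial bundle $\mathscr O_W(m(K_W+\dt_W))$, so it is nowhere vanishing and $D$ does not meet the general fiber. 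Hence $D$ is vertical, and $E=\frac1m D$ gives $K_M+\dt_M\sim_\mathbb Q E$ with $E$ vertical. This also recovers $\kappa(W,K_W+\dt_W)=0$ for the very general fiber.
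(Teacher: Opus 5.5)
Your proposal is correct and follows essentially the same approach as the paper. The paper also obtains $\kappa\ge 0$ from abundance on the general fibre, and also uses the relative map $M\dashrightarrow \mathbb P(f_*\mathscr F_m)$ over $V$ together with $a(M)=\dim V$ to force $\operatorname{rank} f_*\mathscr F_m\le 1$; you phrase this step by contradiction via $\dim Z=a(X)+k$. Your meromorphic section of the rank-one sheaf $f_*\mathscr F_m$ in Step 3 is the same as the paper's global section of $\mathscr L^{\otimes n}\otimes\mathscr F_m$, after which the paper discards the horizontal part using $\kappa=0$ on the very general fibre.
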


\begin{proof}
    Let $W$ be a general fiber and write $\dt_W = \dt|_W$. Then by assumption (1), we have $\dt_W\ge 0$, and hence $(W,\dt_W)$ is a minimal klt log variety. By assumption (2), since $K_W+\dt_W$ is nef, it is semiample, and hence $\kappa(W,K_W+\dt_W) \ge 0$.
 Thus, it suffices to show that $\kappa(W,K_W+\dt_W) \leq 0$.  

    Let $m\geq 1$  be a positive integer such that $m(K_M+\dt_M)$ is an integral divisor and denote $\mathscr F_m = f_*\mathscr O_M(m(K_M + \dt_M))$. We claim that $\operatorname{rank}(\mathscr F_m)\leq 1$ for all $m\geq 1$ such that $m(K_M+\dt_M)$ is an integral divisor.
    
     In fact, by generic flatness and cohomology and base change, for any $m$, there exists a dense open $V_m\subseteq Y$ such that
     \[
        \mathscr F_m\otimes \mathbb C(y) \cong H^0(X_y,m(K_X+\dt)|_{X_y})
     \]
     for every $y\in V_m$, and $\mathscr F_m|_{V_m}$ is a locally free sheaf. If $\mathscr F_m|_{V_m} = 0$ for all $m$, we are done. Thus, we will assume $\mathscr F_m|_{V_m}$ is nontrivial for divisible enough $m$. 
     
     In this case, $\mathscr F_m$ induces the following map over the base $V$:
    \[
        \begin{tikzcd}
            M\arrow[dotted]{r}{g}\arrow[swap]{d}{f}& \mathbb P(\mathscr F_m)\arrow{dl}{\pi}\\
            V
        \end{tikzcd}
    \]
    From the diagram, we have the following inequalities:
    $$a(M)  \geq a(g(M)) = \operatorname{dim}(g(M)) \geq \operatorname{dim}(V) = a(M)$$

    The equality $a(g(M)) = \operatorname{dim}(g(M))$ comes from the fact that both $V$ and $\mathbb P(\mathscr F_m)$ are algebraic. Since the two sides are the same as each other, this forces all the inequalities to be equations. As a result, we have $\operatorname{dim}(g(M)) = \operatorname{dim}(V)$ hence $\operatorname{rank}(\mathscr F_m)\leq 1$.  
    
    Let $V^\circ = \cap_m V_m$. Since each $V_m$ is nonempty and dense in $V$, by Baire's category theorem, their intersection $V^\circ$ is dense as well. For $y\in V^\circ$, we denote $W = f^{-1}(y)$. From the construction, we have $\kappa(W,K_W+\dt_W) \leq 0$.

    Next, we will prove $K_M+\dt_M$ is $\mathbb Q$-linearly equivalent to a vertical divisor.

    We first claim that there exists a $\mathbb Q$-divisor $D$ such that $K_M+\dt_M\sim_\mathbb Q D$ and $D^h\ge 0$. In fact, fix a divisible enough $m$. Then for  $y\in Y$ in a dense subset, write $X_y = W$. Since $\kappa(W,\dt_W)\ge 0$, we have $H^0(W,m(K_W+\dt_W)) \ne 0$ for divisible enough $m$. Since $\mathscr F_m|_{V_m}$ is a locally free sheaf of rank equal to $\operatorname{rank}(\mathscr F_m)$, we have $\operatorname{rank}(\mathscr F_m)\ge 1$. Let $\mathscr L$ be a very ample line bundle on $Y$. Then,  the projection formula implies
    \[
        H^0(M,f^*\mathscr L^{\otimes n}\otimes \mathscr O_X(m(K_M+\dt_M))) = H^0(V,\mathscr L^{\otimes n}\otimes \mathscr F_m) \ne 0
    \]
    for large enough $n$. This shows that 
    \[
        K_M+\dt_M + f^*H\sim_\mathbb Q D,
    \]
    for some $D\ge 0$ on $X$ and some hyperplane section $H$ on $Y$.
    In other words, we conclude that $K_M+\dt_M\sim_\mathbb Q D$ with $D^h\ge 0$.

    Then, it suffices to show $D^h = 0$. In fact, for $y\in \cap_m V_m$, write $X_y = W$. Then, we have  $K_W + \dt_W \sim_\mathbb Q D_h|_W\geq 0$. Since $(W,\dt_W)$ is a minimal K\"ahler klt log variety of dimension $\operatorname{dim}(X) - a(X)$, by assumption (2), we deduce that $|D^h|_W|_\mathbb Q$ is base point free. Since $\kappa(W,D_h|_W) =\kappa(W,K_W+\dt_W) = 0$, the only possibility is that $D_h|_W = 0$. This means $D_h = 0$ because the divisor $D_h$ is horizontal. We conclude that $K_M+\dt_M\sim_\mathbb Q D_v$ where $D_v$ is vertical.
\end{proof}

We will adapt the argument from \cite[Theorem 0.1]{5} to our setup. Based on the previous lemma, we further show that the canonical bundle formula applies after a suitable base change:

\begin{lem}
    Let $(X,\dt)$ be a minimal K\"ahler klt pair satisfying the conditions (1--3) in Theorem \ref{thmA}. Then, there is a diagram
        \[
        \begin{tikzcd}
            M\arrow{r}{f}\arrow[swap]{d}{\mu}&V\\
            X
        \end{tikzcd}
    \]
    where $\mu:M\rightarrow X$ is a projective bimeromorphic contraction from a K\"ahler manifold $M$ to $X$, and $V$ is a projective manifold, such that:

    \begin{enumerate}[(i)]
        \item $f:(M,\dt_M)\rightarrow V$ is a klt-trivial fibration where $K_M+\dt_M=\mu^*(K_X + \dt)$;
        \item $\bd M$ descends to $V$ and there is a contraction $h: V\rightarrow Z$ of projective varieties such that $\bd M_V \sim_\mathbb Q h^*N$ where $N$ is a big and nef $\mathbb Q$-divisor on $Z$; and 
        \item If $E\subseteq M$ is an $f$-exceptional prime divisor, i.e. $\operatorname{codim}f(E)\geq 2$, then $E$  is $\mu$-exceptional.
    \end{enumerate}
\end{lem}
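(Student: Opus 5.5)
We build $M$ in several stages. At each stage we replace $V$ by a higher birational model and $M$ by a resolution of the main component of the fibre product. The invariance properties established earlier keep the canonical bundle formula data compatible under these replacements.

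First, take the algebraic reduction diagram $X \xleftarrow{\mu_0} M_0 \xrightarrow{f_0} V_0$. Resolve so that $\mu_0$ is a projective bimeromorphic morphism from a K\"ahler manifold; this is possible by Remark \ref{kaehlerness-perserving}, since $M_0$ is obtained from the graph by projective modifications over $X$. Put $K_{M_0}+\dt_{M_0}=\mu_0^*(K_X+\dt)$.

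By Lemma \ref{Kvertical}, which uses assumptions (1) and (2) of Theorem \ref{thmA}, there is a vertical $\mathbb Q$-divisor $E$ with $K_{M_0}+\dt_{M_0}\sim_{\mathbb Q}E$. Pass to a flat model of $f_0$ as in Proposition \ref{flatpullback}. There every vertical prime divisor dominates a prime divisor of the base, so $E$ can be written as $f^*G - F$ with $F\ge 0$ vertical and unsaturated. We arrange $F$ to be the minimal such divisor by subtracting full fibres over each prime divisor. Now move $F$ into the boundary: set $B=\dt_M+F$, which may fail to be effective where $F$ is $\mu$-exceptional. Then $K_M+B\sim_{\mathbb Q} f^*G$, so condition (3) of a klt-trivial fibration holds. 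Condition (1) holds because $(X,\dt)$ is klt and, by almost holomorphicity, $F$ does not meet general fibres. Condition (2), the rank condition on $f_*\mathscr O_M(\lceil \bd A(M,B)\rceil)$, follows from $\kappa(W,K_W+\dt_W)=0$ in Lemma \ref{Kvertical}: the klt pair $(W,\dt_W)$ has $h^0(W,\lceil -\dt_W\rceil)=1$ and $K_W+\dt_W\sim_{\mathbb Q}0$. This gives (i), with $B$ as the boundary. I would either check that the statement's $\dt_M$ is meant up to such a vertical correction, or note that the discriminant absorbs $F$.

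For (ii), apply Theorem \ref{moduli_b-nef}: $\bd K+\bd B$ is b-$\mathbb Q$-Cartier and $\bd M$ is b-nef. Its proof shows both descend to any model $V'\to V$ on which the standard normal crossing assumptions hold (Lemma \ref{standard-normal-crossing}). Replace $V$ by such a $V'$ and $M$ by the induced fibration; $\mu$ remains projective bimeromorphic onto $X$. Assumption (3) of Theorem \ref{thmA} then provides a model on which $\bd M$ descends and $\bd M_V\sim_{\mathbb Q}h^*N$ with $N$ nef and big on $Z$. Take a common higher model where both descents hold; since descent persists under further blow-ups, both properties survive.

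For (iii), the remaining defect is $f$-exceptional divisors that are not $\mu$-exceptional. I expect this to be the main obstacle, because it must be achieved without destroying (i) and (ii). Following Ambro's argument in \cite{5}, I would flatten $f$ by Hironaka's flattening theorem: choose a blow-up $V'\to V$, which only improves (ii) by descent, such that the main component $M'$ of $M\times_VV'$ is flat over $V'$. In a flat family every fibre has pure dimension $\dim M-\dim V$, so no divisor of $M'$ maps to codimension $\ge 2$. Then take a resolution $M''\to M'$, keeping $M''\to X$ projective bimeromorphic. Any divisor of $M''$ that is $f''$-exceptional must be exceptional over $M'$. It remains to show such a divisor is $\mu''$-exceptional. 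The delicate point is that the strict transform of a non-$\mu$-exceptional divisor of $M$ is a divisor on $M'$, hence not $f'$-exceptional; so every divisor contracted to codimension $\ge2$ arises from the resolution step. Those divisors lie over $X$ in the image of $\operatorname{Exc}(M''\to M)\cup \operatorname{Exc}(\mu)$, which has codimension at least $2$ in $X$, so they are $\mu''$-exceptional.

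Finally, rechecking (i) after this replacement is harmless. Crepant pullback preserves klt-triviality, and by the remark after the definition of the discriminant, crepant models induce the same $\bd B$ and $\bd M$.
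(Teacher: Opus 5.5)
\textbf{Gap in (i).} What you construct is a klt-trivial fibration for the pair $(M,B)$ with $B=\Delta_M+F$. The statement, however, fixes the boundary by $K_M+\Delta_M=\mu^*(K_X+\Delta)$. This crepant relation is exactly what the proof of Theorem \ref{thmA} later uses when it applies Proposition \ref{pullback} to $\mu$; an extra summand $F$ that is not $\mu$-exceptional would break that step.

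Neither of your fallbacks works. The statement leaves no room for a vertical correction. The discriminant also cannot ``absorb'' $F$. If $F\neq 0$ is effective, vertical and unsaturated, Lemma \ref{1.5} gives a subvariety $W$ of a fibre with $[F]|_W$ not pseudoeffective. Hence $K_M+\Delta_M\sim_{\mathbb Q}f^*G-F$ is not numerically trivial on that fibre, and $(M,\Delta_M)\to V$ fails condition (3) of a klt-trivial fibration altogether.

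\textbf{The missing ingredient is nefness.} Your argument for (i) never uses that $K_X+\Delta$ is nef. The divisor $E\sim_{\mathbb Q}\mu_0^*(K_X+\Delta)$ is nef and vertical. Your decomposition $E=f^*G-F$ with $F\ge 0$ is precisely the hypothesis of Proposition \ref{flatpullback}, which says that on a flat model $F$ becomes the pullback of an effective divisor from the base. So the unsaturated part you want to move into the boundary is forced to vanish, and there is nothing to correct.

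The paper packages this as Corollary \ref{verticalnef}: there is a b-nef b-divisor $\mathbf D$ on $V_0$ with $\overline{K_{M_0}+\Delta_{M_0}}\sim_{\mathbb Q}f_0^*\mathbf D$. After a birational base change $V''\to V_0$ on which $\mathbf D$ descends, one gets $K_{M''}+\Delta_{M''}\sim_{\mathbb Q}f''^*\mathbf D_{V''}$ with the crepant boundary itself, so (i) holds exactly as stated.

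With that replacement, the rest of your proposal agrees with the paper's proof. This covers your rank check via $\kappa(W,K_W+\Delta_W)=0$ and your argument for (ii), via descent of $\mathbf M$ from Theorem \ref{moduli_b-nef} plus assumption (3) on a higher model. It also covers (iii): after resolving a model that dominates a flat model, an $f$-exceptional divisor is exceptional over the flat model, hence over the normal variety $X$.
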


\begin{proof}
    Let $M'$ be the normalization of the graph of the algebraic reduction map $X\dashrightarrow  V'$ of $X$, and $\mu':M'\to X$ and $f':M'\to V'$ be the corresponding morphisms. We write $K_{M'} + \dt_{M'} = \mu'^*(K_X + \dt)$.
        \[
        \begin{tikzcd}
            M'\arrow{r}{f'}\arrow[swap]{d}{\mu'}&V'\\
            X
        \end{tikzcd}
    \]

    By lemma \ref{Kvertical}, there is a vertical and nef $\mathbb Q$-divisor $D$ on $M'$ such that $K_{M'} + \dt_{M'}\sim_\mathbb Q D$. By Corollary \ref{verticalnef}, there is a b-nef b-divisor $\bd D$ on $V'$ such that  $\overline{K_{M'} + \dt_{M'}}\sim_\mathbb Q f^*\bd D$. 

    Then, we will make a sequence of projective birational base changes as follows:
    \[
        \begin{tikzcd}
            M\arrow{r}{\nu}\arrow{d}{f}&M''\arrow{r}\arrow{d}{f''}&M'\arrow{d}{f'}\arrow{r}{\mu'}&X\\
    V\arrow{r}{\pi}&V''\arrow{r}&V'
        \end{tikzcd}
    \]

    Firstly, $V''\rightarrow V'$ is a birational base change such that $\bd D$ descends to a $\mathbb Q$-Cartier $\mathbb Q$-divisor $\bd D_{V''}$ on $V''$,  and $M''$ is the normalization of the main component of the fiber product $M'\times_{V'}V''$. 
     Since $f''$ agrees with $f'$ away from the exceptional locus of $V''\to V'$ and $\dt_{M'}^h\ge0$, we have  $\dt_{M''}|_W\geq 0$ for a general fiber $W$ of $f''$. Since $K_{M''} + \dt_{M''} \sim_\mathbb Q (f'')^*\bd D_{V''}$, we conclude that $f''$ is a klt-trivial fibration, which satisfies the condition (i). 
    
    Next, by assumption (3), the moduli part of $f''$ is b-good and nef. Replacing $V''$ by a higher model if necessary, we can assume there is a contraction $h: V''\rightarrow Z$ of projective varieties such that $\bd M_{V''} \sim_\mathbb Q h^*N$ where $N$ is a big and nef $\mathbb Q$-divisor on $Z$, which satisfies condition (ii).

    Finally, let $\pi:V\rightarrow V''$ be a smooth projective resolution of $V''$  and $M$ be a smooth K\"ahler resolution of the main component of $V\times_{V''} M''$ such that $f:M\to V$ dominates a flat model of $f''$. Then, if $E$ is an $f$-exceptional prime divisor, since $f$ factors through a flat model of $f'$, we know $E$ is an exceptional divisor of the bimeromorphic morphism $M'\to M$. We conclude that $E$ is also an exceptional divisor of the bimeromorphic morphism $M\to X$. This shows that the condition (iii) holds.
\end{proof}

\begin{proof}[Proof of Theorem \ref{thmA}]
    Consider the diagram in the previous lemma
    \[
        \begin{tikzcd}
            M\arrow{r}{f}\arrow[swap]{d}{\mu}&V\\
            X
        \end{tikzcd}
    \]
    satisfying (i)(ii)(iii). The canonical bundle formula reads:
    $$K_M + \dt_M\sim_\mathbb Q f^*(K_V +\bd B_V +\bd M_V).$$

    Write $\dt_M = \dt^+_M - \dt^-_M$. Denote by $\set{\dt_i}$ the prime components of $\dt^-_M$ whose images $D_i := f(\dt_i)$ have codimension one, and by $\set{a_i}$ the coefficients of the $\set{\dt_i}$ in $\set{\dt^-_M}$. Let $F = \sum b_i D_i$ be the smallest $\mathbb Q$-divisor such that  $\sum a_i\dt_i - f^*F$ does not support the fiber  of each $D_i$ in a open neighborhood. Write $A := \dt^-_M-f^*F$. 

From the construction, there exists a big open subset $V^\circ$ of $V$ such that
\begin{enumerate}
    \item $A$ is unsaturated over $V$: for any any prime divisor $E$ on $V$, we have $A$ does not support $f^{-1}(E|_{V^\circ})$;
    \item  $A|_{f^{-1}V^\circ}\geq 0$. 
\end{enumerate}

    Then, for a general $y\in V$, we still have $(\dt_M^+-A)|_{X_y}\geq 0$ hence $f:(M,\dt_M^+-A)\rightarrow V$ is still a klt-trivial fibration such that the moduli part $\bd M$ can be chosen to be unchanged. Let $B'$ be the new discriminant divisor. Then 
    \[
        K_M + \dt_M^+ - A\sim_{\mathbb Q} f^*(K_V + B' + \bd M_V)  
    \]
    Since $A$ is unsaturated over $V$, we have $B'\geq 0$. We conclude that $(V,B')$ is a klt log variety.   
    Since $\bd M_{V} = h^*N$ where $N$ is big and nef, there is a $\mathbb Q$-divisor $\dt_V\geq 0$ on $V$ such that $\dt_V\sim_\mathbb Q B' + \bd M_V$ and $(V,\dt_V)$ is a klt log variety. 

    Write $A = A^+ - A^-$ such that $A^+,A^-\geq 0$ with no common irreducible component. Then we have 
    \[
        \mu^*(K_X + \dt) + \dt_M^- + A^-\sim_{\mathbb Q}f^*(K_V + \dt_V) + A^+.
    \]

    From our construction $A^+$ is unsaturated over $V$.
    We claim that $\dt_M^-+ A^-$ is $\mu$-exceptional. In fact, since $A$ is effective over a big open subset on $V$, we know $A^-$ is $f$-exceptional. By condition (iii), it follows that $A^-$ is $\mu$-exceptional. From the definition of $\dt_M$, it is also clear that $\dt_M^-$ is also $\mu$-exceptional.

    Now, since $K_X + \dt$ is nef, its Zariski decomposition exists and the semi-postive part is $\bd P(K_X+\dt) = \overline{K_X+\dt}$. Since $\dt_M^-+A^-$ is $\mu$-exceptional, it is in particular unsaturated over $X$. By proposition \ref{pullback}, we have $\mu^*(K_X + \dt) + \dt_M^- + A^-$ has a Zariski decomposition and the semi-positive part is $\overline{K_X+ \dt}$.

    As a result, the right hand side $f^*(K_V + \dt_V) + A^+$ has a Zariski decomposition and $\bd P(f^*(K_V + \dt_V) + A^+) = \overline{K_X+ \dt}$. Since $A^+$ is unsaturated, by proposition \ref{pullback}, we know $K_V + \dt_V$ has a Zariski decomposition and 
    $$f^*\bd P(K_V+\dt_V)\sim_\mathbb Q\overline{K_X+ \dt}$$

    By assumption (1),  abundance holds for the projective klt pair $(V,\dt)$ which implies that $\bd P(K_V+\dt_V)$ is b-semiample. Consequently, we have $\overline{K_X+ \dt} \sim f^*\bd P(K_V+\dt_V)$ is b-semiample as well. It follows that $K_X + \dt$ is semiample because $K_X+\dt$ is nef.
\end{proof}

\begin{proof}[Proof of Theorem \ref{dim4}]
    By \cite[Lemma 2.12]{MR4698899},  the algebraic reduction map is almost holomoprphic. By \cite[Proposition 4.16]{loginov2025finitenessprojectivepluricanonicalrepresentation}, we know the moduli part is b-semiample.  Since the abundance conjecture is known for projective threefolds and curves, the assumption (1) is satisfied. By Theorem \ref{thmA}, we conclude that if $K_X$ is nef, then $K_X$ is semiample.
\end{proof}

\begin{proof}[Proof of Theorem \ref{dim3}]
    If $a(X) = 3$, we have $X$ is projective. Assume $a(X) = 1$ or $2$.
    By \cite[Corollary 1.3]{MR2103314}, the algebraic reduction is almost holomorphic. The assumption (3) in Theorem \ref{thmA} is known by \cite[Proposition 4.16]{loginov2025finitenessprojectivepluricanonicalrepresentation} and Proposition \ref{curvebase}. Since the abundance conjecture is known for surfaces and curves, the assumption (1) is satisfied. Theorem \ref{thmA} then implies that if $K_X+\dt$ is nef, then $K_X+\dt$ is semiample.
\end{proof}

\bibliographystyle{plain} 
\bibliography{references}

\end{document}